\newtheorem{theorem}{Theorem}[section]
\newtheorem{lemma}[theorem]{Lemma}
\newtheorem{remark}[theorem]{Remark}
\newtheorem{proposition}[theorem]{Proposition}
\newcommand{\est}{\bold{e}_{t}^{\bold{\bsig}}}
\newcommand{\eut}{\bold{e}_{tt}^{\bold{u}}}
\newcommand{\eup}{\bold{e}_{t}^{\bold{u}}}
\newcommand{\eu}{\bold{e}^{\bold{u}}}
\newcommand{\es}{\bold{e}^{\boldsymbol \sigma}}
\newcommand{\bw}{\bold{w}}
\newcommand{\bfe}{\bold{f}}
\newcommand{\bu}{\bold{u}}
\newcommand{\bv}{\bold{v}}
\newcommand{\bz}{\bold{z}}
\newcommand{\buh}{\bold{u}^h}
\newcommand{\bn}{\bold{n}}
\newcommand{\n}{\bold{n}}
\newcommand{\bx}{\bold{x}}
\newcommand\bg{\bold{g}}
\newcommand{\bff}{\bold{f}}
\newcommand{\bL}{\bold{L}}
\newcommand{\bV}{\bold{V}}
\newcommand{\bcsi}{\boldsymbol{\xi}}
\newcommand{\btau}{\boldsymbol{\tau}}
\newcommand{\bomegaI}{\boldsymbol{\omega}^I}
\newcommand{\bomegah}{\boldsymbol{\omega}^h}
\newcommand{\bS}{\boldsymbol{\Sigma}}
\newcommand{\taub}{\boldsymbol{\tau}}
\newcommand{\bsig}{\boldsymbol{\sigma}}
\newcommand{\bsigh}{\boldsymbol{\sigma}_h}
\newcommand{\beps}{\boldsymbol{\varepsilon}}
\newcommand{\calA}{{\mathcal A}} 
\newcommand{\calD}{{\mathcal D}} 
\newcommand{\calF}{{\mathcal F}} 
\newcommand{\calE}{{\mathcal E}}
\newcommand{\calM}{{\mathcal M}} 
\newcommand{\calT}{{\mathcal T}}
\def\IR{\mathbb R}
\def\IP{\mathbb P}
\def\IQ{\mathbb Q}
\def\A{a}
\newcommand{\R}{\mathbb{ R}}
\newcommand{\normsmall}[1]{\|  #1 \|}
\newcommand{\normA}[1]{\| #1 \|_{\A}}
\newcommand{\norm}[1]{\|  #1 \|}
\newcommand{\abs}[1]{\left|  #1 \right|}
\newcommand{\normtre}[1]{\|\!|  #1 \|\!|}
\newcommand{\normtreMDG}[1]{\|\!|  #1 \|\!|
_{\calE, \textrm{MDG}}}
\newcommand{\normtreIP}[1]{\|\!|  #1 \|\!|
_{\calE, \textrm{IP}}}
\newcommand{\normMDG}[1]{\|  #1 \|_{\calE,\textrm{MDG}}}
\newcommand{\normIP}[1]{\|  #1 \|_{\calE,\textrm{IP}}}
\newcommand{\tr}{\textrm{tr}}
\newcommand{\diver}[1]{\nabla \cdot #1}
\newcommand{\salto}[1]{[\![ #1]\!]}
\newcommand{\saltotre}[1]{\left[\!\left[\!\left[ #1 \right]\!\right]\!\right]}
\newcommand{\vect}[1]{\boldsymbol{#1}}\newcommand{\av}[1]{\{#1\}} 
 \newcommand{\K}{K} 
\renewcommand{\O}{\Omega} 
\newcommand{\Eh}{{{\mathcal F}_h}} 
\newcommand{\Eho}{{{\mathcal F}^{o}_h}} 
\newcommand{\EhoD}{{{\mathcal F}^{o}_h} \cup {{\mathcal F}^{D}_h} } 
\newcommand{\EhD}{{{\mathcal F}^{D}_h} } 
\newcommand{\EhN}{{{\mathcal F}^{N}_h} } 
\newcommand{\Th}{\mathcal{T}_h}
\newcommand{\Ehb}{{{\mathcal F}^{\partial}_h}} 
\newcommand{\V}{\vect{V}_h} 
\newcommand{\Dinf}{\mathrm{D}_{\ast}} 
\newcommand{\Dsup}{\mathrm{D}^{\ast}} 
\newcommand{\cuu}{{\bf c}_{11}} 
\newcommand{\cdd}{{\bf c}_{22}} 
\newcommand{\czz}{{\bf c}_{00}} 
\newcommand{\SF}{{\bf S}_F}
\newcommand{\rinf}{\rho_{\ast}} 
\newcommand{\rsup}{\rho^{\ast}}
\begin{document}
\title{Stability Analysis for Discontinuous Galerkin approximations of the elastodynamics problem}
\author{Paola F. Antonietti$^\flat$, Blanca Ayuso de Dios$^\S$, Ilario Mazzieri$^\flat$, 
and Alfio Quarteroni$^{\flat, \sharp}$}
\date{\today}

\maketitle 

\begin{center}
{\small
$^\flat$ MOX, Dipartimento di Matematica, Politecnico di Milano\\
Piazza Leonardo da Vinci 32, I-20133 Milano, Italy.\\
{\tt paola.antonietti@polimi.it}, {\tt ilario.mazzieri@polimi.it}\\
\vskip 0.2cm
$^\S$ Centre de Recerca Matem\'{a}tica, Campus de Bellaterra, 08193 Bellaterra, Barcelona, Spain. {\tt bayuso@crm.cat}\\
\vskip 0.2cm
$^{\flat}$ CMCS, Ecole Polytechnique Federale de Lausanne (EPFL), Station 8, 1015 Lausanne, Switzerland. {\tt alfio.quarteroni@epfl.ch}
}
\end{center}
\vspace*{0.2cm}
\begin{abstract}
We consider semi-discrete discontinuous Galerkin approximations of a general elastodynamics problem, in both {\it displacement} and  {\it displacement-stress} formulations. We present the stability analysis of all the methods in the natural energy norm and derive  optimal a-priori error estimates. For the  displacement-stress formulation, schemes preserving  the total energy of the system are introduced and discussed. We include some numerical experiments in three dimensions to verify the theory.
\end{abstract}
\section{Introduction}\label{sec:intro}

Understanding the physics of earthquakes is of crucial importance for predicting their impacts on the human and natural environment and delineating seismic risk reduction strategies.
However, to produce realistic seismic events is  needed not only correct assumptions on the physical parameters of the model, but also numerical methods capable to to face the typical multi-scale nature of such problems. Nowadays, despite the the great development of High Performance Computer facilities,
the representation of seismic events is still a  challenging task that present two different  but fundamental difficulties: the need for {\it geometrical flexibility} , essential when dealing with complex wave phenomena, and  the {\it control of dissipation and dispersion errors}  introduced by the numerical scheme, since accurate approximation of amplitude and phase of the waves gives important informations about the interior structure and consistency of materials. These two pitfalls together with the fact  that the wavelengths of interest of a seismic event are usually small compared to the size of the body excited, imply that a large number of unknowns  is naturally involved in the computational model. Therefore, efficiency and scalability of the numerical methods (and suitable implementation of them) are mandatory on large parallel supercomputers in order to keep as low as possible the computational burden.
Discontinuous Galerkin~(DG) methods for elastodynamics have received  lot of attention in recent years, since they  
account for the complexity of the geometric constraints,  provide accurate solutions while keeping the computational effort as low as possible, are naturally oriented towards parallel computation,
see, \emph{e.g.}, \cite{RiWh03,DeBaSeWh08,WilcoxStadlerBursteddeGhattas2010,
AntoniettiMazzieriQuarteroniRapetti_2012} and the references therein. \\

The purpose of this paper is to design and analyze semi-discrete DG methods for a  general elastodynamic problem, considering both the {\it displacement } and the {\it displacement-stress} formulation.
So far,  two main streams have been followed in the design and analysis of DG methods: the {\it displacement} formulation and the {\it velocity-stress} formulation. For the former, DG methods of Interior Penalty type, symmetric and non-symmetric, have been proposed and analyzed in \cite{RiWh03}. The schemes are extended to Spectral-DG methods in \cite{AntoniettiMazzieriQuarteroniRapetti_2012} and to DG approximations of viscoelasticity in \cite{RSW07}. For  the {\it velocity-stress} formulations, the design of the DG methods follow the traditional guidelines in the construction of DG schemes for hyperbolic conservation laws. 
Conservative methods based on the use of central flux have been proposed in  \cite{Delcourteetal_2009}; while non-conservative methods based on upwinding fluxes are studied  \cite{KaserDumbser_2008}. The DG method developed in \cite{WilcoxStadlerBursteddeGhattas2010} is based on a velocity-strain formulation of the coupled elastic-acoustic wave equations; this allows the acoustic and elastic wave equations to be expressed in conservative form within the same framework.\\

In this paper we introduce a fairly general family of semidiscrete discontinuous Galerkin methods for a linear elastodynamic problem, considering DG methods for both the {\it displacement} and {\it displacement-stress} formulations of the problem.   The main goal is to identify the key ingredients to ensure stability for DG approximations of the general problem with mixed boundary conditions (that are typically encountered in the seismic applications).   Our stability analysis follow, as one should expect, the one for the continuous problem. For this reason, we start with the {\it displacement-stress} formulations which gives further insight on the features required by the methods.  Finite Element methods for the {\it displacement-stress} formulation were proposed and analyzed in the seminal work \cite{babis00}. Here, some extra difficulties arise in the analysis due to the discontinuous nature of the spaces and the fact that we consider the general problem with mixed boundary conditions.  However, the flexibility of DG framework allow us to construct  in a simple way, {\it displacement-stress} DG methods  that are fully conservative (in the sense that the total discrete energy is preserved).\\

For the {\it displacement} formulation, we consider Interior Penalty (IP) schemes, focusing on symmetric methods, similar to those considered for wave equation in \cite{GrScSc06}, but different from the IP schemes  introduced  in \cite{RiWh03,RSW07,AntoniettiMazzieriQuarteroniRapetti_2012} for linear elastodynamics. The IP methods considered in those works contain an extra penalization term that penalizes the time derivative of the displacement besides the displacement itself. Such extra penalization, whose physical meaning is completely unclear, was required to prove theoretically the stability of the resulting methods.  However, as we shall demonstrate via numerical experiments the inclusion of such extra term in the schemes,  seem to undermine the overall efficiency of the methods, since they impose a more restrictive condition on the time integration. Here,  we focus on symmetric IP methods, with no extra stabilization terms, and provide stability in the natural energy norm associated to the methods.\\

For all the DG methods considered, optimal error estimates are derived in a standard fashion, and the presented theory is verified throught three dimensional numerical tests. 
 We remark that the semidiscrete analysis that we present here, is an intermediate but fundamental step to derive the fully discrete stability analysis for the elastodynamics problem (when the coupled effect of the spatial and temporal discretization is taken into account for deriving \emph{a-priori} error estimates). This is out of the scope of the paper and will be subject for future work.  \\

The paper  is organized as follows. In in Section~\ref{section_model} we introduce the model problem and revise some key results.  The discrete notation is given in Section~\ref{sec:tools} and in Section \ref{DG-methods} we introduce the family of DG methods.
 The stability analysis is presented in Section~\ref{stability:sec}. A priori error estimate are derived in Section~\ref{sec:error_estimates}, and numerical experiments  are given in Section~\ref{sec:numerics}. The paper is closed with  Appendix~\ref{sec:proof-stability} and Appendix~ \ref{sec:ProofAuxiliary} containing some technical results.\\

{\sc Notation.} Throughout the paper, we use  standard notation for Sobolev
spaces~\cite{AdamsFournier2003}. For a bounded domain $D\subset
\mathbb{R}^{d}$, $d=2,3$,  we denote by $H^{m}(D)$ the $L^2$-Sobolev space of order~$m \geq 0$   and by  $\|\cdot \|_{m,D}$ and $| \cdot
|_{m,D}$ the usual Sobolev norms and seminorms, respectively. For $m
= 0$, we write $L^{2}(D)$ instead of $H^{0}(D)$. The space $H^1_{0, \Gamma}(D)$ is the subspace of $H^1(D)$ of functions 
with zero trace on $\Gamma \subseteq \partial D$.
Due to the nature of the problem, we only deal with vector--valued and matrix--valued functions and we use
boldface type for both. 
More precisely, the Sobolev spaces of vector--valued and symmetric tensor-valued functions are denoted by ${\bf H}^{m}(D)=[H^{m}(D)]^{d}$, and $ \vect{\mathcal H}^{m}(D)=[H^{m}(D)]^{d\times d}_{\rm{sym}}$, respectively. We will use $(\cdot\,,\cdot)_{D}$ to denote the standard inner product in any of the spaces ${\bf H}^{0}(D)=\bL^{2}(D)$ or $\vect{\mathcal H}^{0}(D)=\vect{\mathcal L}^{2}(D)$. Throughout the paper $C$ denotes a generic positive constant that may take different values in different places, but is always mesh independent. To avoid the proliferation of constants, we will use the notation $x \lesssim y$ to represent the inequality $x \leq C y$ for a constant as before. \\

For time dependent functions, we take the standard approach~\cite{AdamsFournier2003} of treating these as maps from a time interval $(0,T)$ into a Banach space $X$ and set
\begin{equation*}
\| \bv \|_{L^p(0,t;X)} = \left(\int_0^t \| \bv (\tau) \|_X^p  d\tau \right)^{1/p}, \quad 0 \leq t \leq T, \; 1 \leq p < \infty,
\end{equation*}
with the obvious modifications when $p =\infty$.

\section{Continuous problem}\label{section_model}
We consider an elastic medium occupying an open and bounded region $\Omega \subset \R^d$, $d=2, 3$, with 
Lipschitz boundary $\partial \Omega$ and outward normal unit vector $\bn $.
The medium is in a state of equilibrium under the influence of external forces, consisting in a volume force $\bff: \Omega \times [0,T] \rightarrow \R^d$, and a surface force ${\bf t}: \partial\Omega \times [0,T] \rightarrow \R^d$.
In linear elasticity it is possible to express the surface force ${\bf t}$ in term of Cauchy stress tensor $\bsig: \Omega \times [0,T] \rightarrow \mathbb{S}=\mathbb{R}_{\textrm{sym}}^{d \times d}$. Using the Cauchy stress formula ${\bf t} = \bsig \bn$, and the Gauss-Green theorem  we have that
\begin{equation}
\int_{\partial \Omega} {\bf t} \,dx 
= \int_{\partial \Omega} \bsig \bn  \,dx  
= \int_{\Omega}  \diver{\bsig} \,ds.
\end{equation}
The action of external loads induces on the body $\Omega$ a displacement vector field  that will be denoted by $\bu :\Omega \times [0,T] \longrightarrow \mathbb{R}^{d}$.  \\
Let the boundary $\partial \Omega$ be composed of two disjoint portions $\Gamma_{D}$, where the displacement vector $\bu$ is prescribed, and  $\Gamma_{N}$ where an external load $\bg$ applies. Assuming that $\textrm{meas}(\Gamma_D) >0$, 
the mathematical model of linear elastodynamics reads:
\begin{subequations}
 \begin{align}
\displaystyle \rho(\bx) \bu_{tt}(\bx, t)   -\diver{\bsig}(\bx, t)  & =  \bfe(\bx, t), 
&& \textrm{in} \; \Omega \times (0,T], \label{eqd:1} \\
\displaystyle  \calA \bsig(\bx, t) - \beps(\bu(\bx, t)) & = {\bf 0} 
&& \textrm{in} \; \Omega \times (0,T], \label{eqd:2}\\
\displaystyle \bu(\bx, t)  & =   \bold{0},  
&& \textrm{on} \; \Gamma_D\times (0,T], \label{eqd:3} \\
\displaystyle \bsig(\bx, t)  \bn(\bx)  & =  \bg(\bx, t),  
&& \textrm{on} \; \Gamma_N\times (0,T], \label{eqd:4}\\
\displaystyle \bu_t(\bx, 0)   & =  \bu_1(\bx), 
&& \textrm{in} \; \Omega \times \{ 0\}, \label{eqd:5} \\
\displaystyle \bu(\bx, 0)   &=  \bu_0(\bx),  
&&\textrm{in} \; \Omega \times \{ 0\}.\label{eqd:6}
 \end{align}
\end{subequations}
A notation explanation follows.
To ease the reading here and in the following, we drop the explicit space/time dependence on the functions. 
The mass density $\rho \in L^{\infty}(\Omega)$ is a strictly positive function, i.e.,  
\begin{equation}\label{material:0}
0<\rinf \leq \rho(\bx) \leq \rsup
\qquad \forall \bx \in \Omega.
\end{equation}
We assume $\bff \in L^{2}((0,T]; \bL^2(\O)) $, $\bg \in {C}^1((0,T];{\bf H}^{1/2}(\Gamma_N))$, and suppose that the initial conditions $\bu_0$ and  $\bu_1$ for the displacement and the velocity field, respectively, are smooth enough functions, i.e.,  $\bu_0 \in {\bf H}^{1}_{0,\Gamma_D}(\O)$ and $\bu_1 \in \bL^{2}(\O)$.
Hereafter, we denote by $\beps(\bu)  : \Omega \longrightarrow \mathbb{S}$  the linearized strain tensor or symmetric gradient defined by 
\begin{equation*}
\beps(\bu)=  \frac{1}{2}(\nabla \bu +\nabla\bu^{\top}),
\end{equation*}
or, componentwise,
\begin{equation*}
 \varepsilon_{ij}(\bu)= \frac{1}{2}\left(\frac{\partial u_i}{\partial x_j}+\frac{\partial u_j}{\partial x_i} \right)\qquad i,j =1\ldots d.
 \end{equation*}

The compliance tensor $\calA=\calA(x) : \mathbb{S} \longrightarrow \mathbb{S}$ is  a bounded, symmetric and uniformly positive definite operator, encoding  the material properties,  such that
 \begin{equation}\label{defsigma}
\calA \bsig = \frac{1}{2\mu}\left( \bsig -\frac{\lambda}{3\lambda + 2\mu} \tr(\bsig) \mathbb{I} \right) \qquad \forall\, \bsig \in \mathbb{S}\;,
\end{equation}
where $\mathbb{I}\in \mathbb{R}^{d \times d}$ denotes the identity operator, $\tr(\cdot)$ stands for the trace operator
\begin{equation*}
\tr(\taub) = \sum_{i=1}^d \taub_{ii} \quad \forall \, \taub \in \mathbb{S},
\end{equation*}
and both  the Lam\'e parameters $\lambda, \mu \in L^{\infty}(\Omega)$  are  positive functions (isotropic case). 
Provided $\calA$ is invertible,  \eqref{defsigma}  is equivalent to the  Hooke's law
$\bsig=\calA^{-1}  \beps =  \calD  \beps $, with 
\begin{equation}\label{defeps}
\begin{aligned}
& \calD:\mathbb{S} \longrightarrow \mathbb{S},
&&\calD \btau =  {2\mu}\btau + \lambda \tr(\btau)\mathbb{I} \quad \forall \, \btau \in \mathbb{S}\;.
\end{aligned}
\end{equation}
In this case, from the properties of $\mathcal{A}$, it is directly inferred that $\calD$ satisfies the symmetry properties
\begin{equation*}
D_{ijk\ell} = D_{jik\ell} = D_{ij\ell k} = D_{k\ell ij} \qquad \forall \, i,j,k,\ell = 1,...,d,
\end{equation*}
and that it is also bounded and positive definite,  i.e., there exist $ \Dinf, \Dsup>0$ such that
\begin{equation}\label{cota:D}
\begin{aligned}
0 \, <\,  \Dinf (\btau , \btau )_{\Omega} 
\leq  ( \mathcal{D}  \btau , \btau)_{\Omega}  
\leq \Dsup ( \btau , \btau )_{\Omega}  
&& \forall\, \btau \in \mathbb{R}^{d\times d}, \btau \neq \bf{0}.
\end{aligned}
\end{equation}

To simplify the notation, in the following we will write $\bg_0=\bg(\bx, 0)$, $\bsig_0=\bsig(\bx, 0)= \calD \beps(\bu(\bx, 0))=
\calD \beps(\bu_0)$.\\

We next consider the variational formulation of \eqref{eqd:1}--\eqref{eqd:6}: for all $t \in (0,T]$ find $(\bu,\bsig) \in {\bf H}^{1}_{0,\Gamma_D}(\O)\times \vect{\mathcal L}^{2}(\O)$ such that:
\begin{subequations}
\begin{align}
 ( \rho \bu_{tt} , \bv )_{\O} + ( \bsig , \beps(\bv) )_{\O}  & =  ( \bfe , \bv )_{\O} + ( \bg , \bv)_{\Gamma_N} 
&& \forall\, \bv \in  {\bf H}^{1}_{0,\Gamma_D}(\O), \label{weak:1}\\
( \calA \bsig , \btau)_{\O} - ( \beps(\bu) , \btau )_{\O} & = 0  
&& \forall\, \btau \in \vect{\mathcal L}^{2}(\O).\label{weak:2}
\end{align}
\end{subequations}
Under the above regularity assumptions the saddle problem \eqref{weak:1}--\eqref{weak:2} has a unique solution $(\bu,\bsig) \in {\bf H}^{1}_{0,\Gamma_D}(\O)\times \vect{\mathcal L}^{2}(\O)$, \cite{DuvautLions_1976}, and satisfies \emph{a priori} stability estimate in the  energy norm
\begin{equation}\label{norm:E}
\| (\bu,\bsig) \|_{\calE}^{2} =  \| \rho^{1/2} \bu_{t} \|^2_{0,\O}  + \| \calA^{1/2} \bsig \|_{0,\O}^2.
\end{equation}
Since $\bsig=  \calA^{-1}  \beps$, cf. \eqref{eqd:2}, the last term on the right hand side can be replaced by $ \| \calD^{1/2} \beps(\bu) \|_{0,\O}^2$.

\begin{lemma}[\emph{A priori} stability estimate]\label{le:0}
Let $(\bu,\bsig) \in { C}^{1}((0,T]; {\bf H}^{1}(\O)) \times L^{2}((0,T]; \vect{\mathcal{L}}^{2}(\O)) $ be the solution of \eqref{eqd:1}--\eqref{eqd:6}.  Then,
\begin{enumerate}
\item If  $\bff=\bg = {\bf 0}$, the total energy of the system  is preserved in time
\begin{equation*}
\begin{aligned}
& \| (\bu(t),\bsig(t)) \|_{\calE} = \| (\bu_0,\bsig_0) \|_{\calE},
&& 0 <   t \leq T.
\end{aligned}
\end{equation*}
\item If $\bff \in L^{2}((0,T]; \bL^{2}(\O))$ and $\bg\in C^{1}((0,T]; {\bf H}^{1/2}(\Gamma_N))$, the following  \emph{a priori} energy estimate holds: 
\begin{equation*}
\begin{aligned}
&\| (\bu(t),\bsig(t)) \|_{\calE} \lesssim
\sqrt{\mathcal{G}} +  \int_0^t  \left( \rinf^{-1/2} \| \bfe(\tau) \|_{0,\O} +  \norm{\bg_{\tau}(\tau)}_{0,\Gamma_N} \right) \, d\tau
&& 0 <   t \leq T,
\end{aligned}
\end{equation*}
with 
\begin{equation*} 
\mathcal{G}= \|\bg_0\|_{0,\Gamma_N}^{2}
+(1+\Dinf^{-1}) \|(\bu_0,\bsig_0)\|^{2}_{\calE} \\
+  \Dinf^{-1} \sup_{0<t\leq T} \|\bg(t)\|_{0,\Gamma_N}^{2}.
\end{equation*}
being $\Dinf$, $\rinf$ the lower bounds on the stiffness operator $\mathcal{D}$ and on the mass density given in \eqref{cota:D} and \eqref{material:0}, respectively. 
\end{enumerate}
\end{lemma}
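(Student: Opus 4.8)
The plan is to reproduce at the continuous level the classical energy argument: test the momentum balance against the velocity and use the constitutive law to recognise a total time derivative of the energy. Concretely, I would take $\bv=\bu_t$ in \eqref{weak:1} and note that $(\rho\bu_{tt},\bu_t)_\O=\tfrac12\tfrac{d}{dt}\|\rho^{1/2}\bu_t\|_{0,\O}^2$. For the stress term I would use the constitutive relation in the form $\bsig=\calD\beps(\bu)$ (equivalently \eqref{weak:2} differentiated in time), so that, by the symmetry of $\calD$ and the identity $(\calD\beps(\bu),\beps(\bu))_\O=\|\calA^{1/2}\bsig\|_{0,\O}^2$, one has $(\bsig,\beps(\bu_t))_\O=\tfrac12\tfrac{d}{dt}\|\calA^{1/2}\bsig\|_{0,\O}^2$. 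This yields the exact energy identity $\tfrac12\tfrac{d}{dt}\|(\bu,\bsig)\|_\calE^2=(\bfe,\bu_t)_\O+(\bg,\bu_t)_{\Gamma_N}$. Part \emph{(i)} is then immediate: with $\bfe=\bg=\bold{0}$ the right-hand side vanishes, $\|(\bu,\bsig)\|_\calE$ is constant, and integrating on $(0,t)$ gives the conservation statement.

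For part \emph{(ii)} I would integrate the energy identity on $(0,t)$ and write $E(t)=\|(\bu(t),\bsig(t))\|_\calE$. The volume forcing is controlled by Cauchy--Schwarz and the lower bound \eqref{material:0}, giving $|(\bfe,\bu_\tau)_\O|\le\rinf^{-1/2}\|\bfe\|_{0,\O}\,E(\tau)$. The genuine difficulty is the Neumann term $\int_0^t(\bg,\bu_\tau)_{\Gamma_N}\,d\tau$, since the trace of the velocity $\bu_\tau$ is not controlled by the energy norm. The remedy is to integrate by parts in time — this is precisely where the $C^1$ regularity of $\bg$ enters — turning it into $(\bg(t),\bu(t))_{\Gamma_N}-(\bg_0,\bu_0)_{\Gamma_N}-\int_0^t(\bg_\tau,\bu)_{\Gamma_N}\,d\tau$, which now involves only the trace of the \emph{displacement}.

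To close the estimate I would bound each displacement trace by the energy norm. By the trace inequality followed by Korn's inequality (licit because $\mathrm{meas}(\Gamma_D)>0$), $\|\bu\|_{0,\Gamma_N}\lesssim\|\beps(\bu)\|_{0,\O}$, and since $\beps(\bu)=\calA\bsig$ the coercivity bound \eqref{cota:D} gives $\|\beps(\bu)\|_{0,\O}^2\le\Dinf^{-1}\|\calA^{1/2}\bsig\|_{0,\O}^2\le\Dinf^{-1}E^2$, whence $\|\bu\|_{0,\Gamma_N}\lesssim\Dinf^{-1/2}E$. Applying this together with Young's inequality to the endpoint term $(\bg(t),\bu(t))_{\Gamma_N}$ lets me absorb a $\tfrac14E(t)^2$ into the left-hand side at the cost of a term $\Dinf^{-1}\|\bg(t)\|_{0,\Gamma_N}^2$, which I bound by its supremum over $(0,T]$; combined with the data endpoint $(\bg_0,\bu_0)_{\Gamma_N}$ this reproduces exactly the constant $\mathcal{G}$. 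After absorption one is left with $E(t)^2\lesssim\mathcal{G}+\int_0^t\big(\rinf^{-1/2}\|\bfe\|_{0,\O}+\Dinf^{-1/2}\|\bg_\tau\|_{0,\Gamma_N}\big)E(\tau)\,d\tau$. A quadratic Gronwall argument finishes the proof: if $E(t)^2\le c^2+\int_0^t k(\tau)E(\tau)\,d\tau$ with $c,k\ge0$, then $E(t)\le c+\tfrac12\int_0^t k$, which delivers the asserted bound with $c\sim\sqrt{\mathcal{G}}$ (the constant $\Dinf^{-1/2}$ being absorbed into the $\lesssim$).

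The main obstacle is precisely the Neumann boundary term: it forces both the integration by parts in time — hence the $C^1$ assumption on $\bg$ and the appearance of $\bg_\tau$ in the estimate — and the use of the trace/Korn chain to trade the uncontrollable velocity trace for the displacement trace. The remaining bookkeeping (Young's inequality to absorb $E(t)^2$, passing to $\sup_t\|\bg(t)\|_{0,\Gamma_N}^2$ so that the non-integral terms fit into $\mathcal{G}$, and the quadratic Gronwall step) is routine but must be organised carefully to recover the stated constant.
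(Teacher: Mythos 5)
Your proposal is correct and follows essentially the same route as the paper's proof in Appendix~\ref{sec:proof-stability}: the energy identity obtained by testing \eqref{weak:1} with $\bu_t$ and the time-differentiated constitutive law with $\bsig$, integration by parts in time on the Neumann term, the trace--Korn--coercivity chain $\|\bu\|_{0,\Gamma_N}\lesssim \Dinf^{-1/2}\|(\bu,\bsig)\|_{\calE}$, absorption via Young's inequality, and the quadratic Gronwall lemma. No substantive differences to report.
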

The proof is shown in Appendix \ref{sec:proof-stability}, cf. also \cite[Theorem 4.1]{DuvautLions_1976} for the general existence result. 

\begin{remark}\label{rem:primal}
Choosing $\btau = \beps(\bv)$ in \eqref{weak:2} and substituting the result in \eqref{weak:1} it is possible to obtain the following equivalent weak problem: for all $t \in (0,T]$ find $\bu \in {\bf H}^{1}_{0,\Gamma_D}(\O)$ such that:
\begin{align}
 ( \rho \bu_{tt} , \bv )_{\O} + ( \calD \beps(\bu) , \beps(\bv) )_{\O}  & =  ( \bfe , \bv )_{\O} + ( \bg , \bv)_{\Gamma_N} & \quad \forall\, \bv \in  {\bf H}^{1}_{0,\Gamma_D}(\O). \label{weak:1p}
\end{align}
In particular, it is possible to prove that problem \eqref{weak:1p} is well posed and that its unique solution satisfies $\bu \in C((0,T]; {\bf H}^{1}_{0,\Gamma_D}(\O)) \cap C^1((0,T]; \vect{\mathcal L}^{2}(\O) )$.
\end{remark}

Finally, we recall a couple of mathematical tools needed in our forthcoming analysis.
We recall the following ``simplified'' version of the Gronwall's lemma, see \cite{Quarteroni_2009} for the proof.
\begin{lemma}[Gronwall's lemma]\label{Gronwall}
Let $\varphi \in L^1(0, T)$ be a positive function, $g$ a non negative constant, and $y$ a continuous non negative function in $(0, T)$ satisfying
\begin{equation*}
\begin{aligned}
&y^2(t) \leq g + \int_{0}^t \varphi(\tau)y(\tau) d\tau
&& \forall t \in (0, T).
\end{aligned}
\end{equation*}
Then
\begin{equation*}
\begin{aligned}
y(t) \leq \sqrt{g}+ \frac{1}{2}\int_{0}^t \varphi(\tau) d\tau  
&& \forall t \in (0, T).
\end{aligned}
\end{equation*}
\end{lemma}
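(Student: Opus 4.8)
The plan is to reduce the quadratic integral inequality to a linear differential inequality for the \emph{square root} of the accumulated right-hand side. First I would introduce the auxiliary function
\[
G(t) = g + \int_{0}^t \varphi(\tau) y(\tau)\, d\tau,
\]
which by hypothesis satisfies $y^2(t) \leq G(t)$ for all $t \in (0,T)$; since $y \geq 0$ this is equivalent to $y(t) \leq \sqrt{G(t)}$. Because $\varphi \in L^1(0,T)$ and $y$ is continuous (hence locally bounded), the integrand $\varphi y$ lies in $L^1_{\mathrm{loc}}$, so $G$ is absolutely continuous with $G'(t) = \varphi(t) y(t)$ for almost every $t$.

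I would first treat the case $g > 0$, so that $G(t) \geq g > 0$ throughout. Using $y \leq \sqrt{G}$ and $\varphi \geq 0$,
\[
G'(t) = \varphi(t)\, y(t) \leq \varphi(t)\sqrt{G(t)} \qquad \text{for a.e. } t.
\]
Dividing by $2\sqrt{G(t)}$, which is legitimate because $G$ is bounded away from zero, yields $\frac{d}{dt}\sqrt{G(t)} \leq \tfrac{1}{2}\varphi(t)$. The key point is that $\sqrt{\cdot}$ is Lipschitz on $[g,\infty)$, so $\sqrt{G}$ is itself absolutely continuous and the fundamental theorem of calculus applies; integrating from $0$ to $t$ and using $G(0) = g$ gives
\[
\sqrt{G(t)} \leq \sqrt{g} + \tfrac{1}{2}\int_{0}^t \varphi(\tau)\, d\tau,
\]
whereupon $y(t) \leq \sqrt{G(t)}$ delivers the claimed bound.

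Finally I would dispose of the degenerate case $g = 0$ by a limiting argument: if the hypothesis holds with $g = 0$, then \emph{a fortiori} it holds with $g$ replaced by any $\varepsilon > 0$, so the case already established gives $y(t) \leq \sqrt{\varepsilon} + \tfrac{1}{2}\int_{0}^t \varphi(\tau)\, d\tau$ for every $\varepsilon > 0$; letting $\varepsilon \to 0^+$ concludes. I expect the only genuine obstacle to be the regularity bookkeeping, namely justifying that $G$, and then $\sqrt{G}$, are absolutely continuous so that the chain rule and the fundamental theorem of calculus may be invoked almost everywhere, together with the need to keep $G$ strictly positive before dividing, which is exactly what forces the separate $\varepsilon$-perturbation when $g = 0$.
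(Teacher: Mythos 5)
Your proof is correct. The paper itself does not prove this lemma---it defers to \cite{Quarteroni_2009}---and your argument is precisely the standard one: bound $y$ by the square root of the accumulating majorant $G$, observe that $\bigl(\sqrt{G}\bigr)' = G'/(2\sqrt{G}) \leq \varphi/2$ almost everywhere, integrate, and handle the degenerate case $g=0$ by the perturbation $g \mapsto \varepsilon > 0$, which is exactly the device needed to keep the division by $\sqrt{G}$ legitimate.
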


Finally, for $\bw, \bz$ regular enough, the
following integration by parts formula holds 
\begin{equation}\label{eq:ibp_formula}
\int_0^t(\bw, \bz_{\tau})d\tau
=(\bw(t), \bz(t))
-(\bw(0), \bz(0))
-\int_0^t(\bw_{\tau}, \bz)d\tau.
\end{equation}

\section{Notation and discrete technical tools}
\label{sec:tools}
In this section we introduce the basic notation and revise some technical tools that will be used in our analysis. \\

\noindent {\bf Mesh partitions.}
 We consider a family $\left\{\calT_{h}, 0<h\leq1\right\}$ of shape-regular conforming partitions of $\Omega$ into disjoint open elements  $\K$ such that $\overline{\Omega}=\cup_{\K \in \calT_{h}} \overline{\K}$, where 
each $\K \in \calT_{h}$ is the image of a fixed master element $\widehat{\K}$, \emph{i.e.}, 
$\K=F_\K(\widehat{\K})$, and $\widehat{\K}$ is either the open unit $d$-simplex or the open 
unit hypercube in $\mathbb{R}^d$, $d=2,3$. For a given mesh $\calT_h$, we define the mesh size of the partition as $h = \max_{\K \in \calT_h} h_{\K}$ with $h_{\K} = {\rm diam}(\K)$. 
Notice that the mesh may contain hanging nodes.
We collect all the interior (boundary, respectively) faces in the set $\calF_h^{o}$ ($\calF_h^{\partial}$, respectively) and set $\calF_h=\calF_h^{o}\cup \calF_h^{\partial}$. 
In particular $\calF_h^{\partial} = \calF_h^{D} \cup \calF_h^{N}$, where  $\calF_h^{D}=\calF_h^{\partial}\cap \Gamma_D$ and $\calF_h^{N}=\calF_h^{\partial}\cap \Gamma_N$  contain respectively all Dirichlet and Neumann boundary faces. Implicit in these definitions is the assumption that $\calT_h$ respect the decomposition of $\partial \Omega$ in the sense that any $F \in \calF_h^{\partial}$ belongs to the interior of exactly one of $ \calF_h^{D}$ or $ \calF_h^{N}$. 

An interior face (for $d=2$, ``face'' means ``edge'')  of $\calT_h$ is defined as the (non--empty) interior of $\partial \K^+\cap \partial \K^-$,
where $\K^+$ and $\K^-$ are two adjacent elements of $\calT_h$. Similarly, a boundary face of $\calT_h$ is defined as the (non-empty) interior of
$\partial \K \cap \Omega$, where $\K$ is a boundary element of $\calT_h$.
 We also assume that for all  $\K \in \calT_h$ and for all $F \in \calF_h$, 
$h_{\K} \lesssim h_F$, where $h_F$ is the diameter of $F \in \calF_h$. This last assumption implies that the 
maximum number of hanging nodes on each face is uniformly bounded.

Finally, we assume that a {\em bounded local variation} property holds: for any pair of elements $\K^+$ and $\K^-$ sharing a $(d-1)$--dimensional face $h_{\K^+} \approx h_{\K^-}$, see~\cite{GeorgoulisHallHouston2007b}, for example.\\

For $s\geq 1$, we define the \emph{broken} Sobolev spaces
\begin{align*}
{\bf H}^{s}(\Th)&=\left\{ \bv \in
    \bL^{2}(\O)~\mbox{such that}~\bv\big|_{\K} \in
    {\bf H}^{s}(\K), \quad \forall\, \K\in \Th\,\right\}, 	\\
\vect{\mathcal{H}}^{s}(\Th)&=\left\{ \btau \in
    \vect{\mathcal{L}}^{2}(\O)~\mbox{such that}~\btau\big|_{\K} \in
    \vect{\mathcal{H}}^{s}(\K), \quad \forall\, \K\in \Th\,\right\}.
\end{align*}
We will also denote by $(\cdot \,,\cdot)_{\Th}$ and $\langle \cdot\, , \cdot \rangle_{\Eh}$ the $\bL^{2}(\Th)$ and $\bL^{2}(\Eh)$ inner products, respectively, and use the convention that
\begin{equation*}
\begin{aligned}
&(\boldsymbol{\varphi}, \boldsymbol{\psi})_{\Th} =\sum_{K \in \Th} (\boldsymbol{\varphi}, \boldsymbol{\psi})_K
&& 
\langle \boldsymbol{\varphi}, \boldsymbol{\psi} \rangle_{\Eh}=\sum_{F \in \Eh} (\boldsymbol{\varphi}, \boldsymbol{\psi})_F.
\end{aligned}
\end{equation*}
The same notation will be used for the $\vect{\mathcal{L}}^{2}(\Th)$ and $\vect{\mathcal{L}}^{2}(\Eh)$ inner products.\\

\noindent {\bf Trace operators.} Let $F\in \Eho$ be an interior face shared by two elements of $\calT_h$, say $\K^+$ and $\K^-$ and let $\bn^+$ and $\bn^-$ denote the normal unit vectors on $F$ pointing outward $\K^+$ and $\K^-$, respectively. For a vector  $\bv\in {\bf H}^{1}(\Th)$  we  denote by $\bv^+$  and $\bv^-$ the  traces of $\bv$ on $F$ taken within the interior of $\K^+$ and $\K^-$, respectively. 
For $\btau \in \vect{\mathcal{L}}^{2}(\Th)$, $\btau^+$ and $\btau^-$ are defined analogously.
For a scalar $\delta \in [0,1]$, the \emph{weighted average} of a vector $\bv$ and a tensor $\btau$ are defined in the usual way
\begin{align*}
&\av{\bv}_{\delta}=\delta \bv^+ + (1-\delta)\bv^-,  
&\av{\btau}_{\delta}=\delta \btau^+ + (1-\delta)\btau^-, 
&&\forall F\in \Eho.
\end{align*}
Whenever $\delta=1/2$ we neglect the subscript and simply write $\av{\cdot}$, since the {\it weighted average} reduces to the standard average.
On boundary faces $F \in \Ehb$,  we set
\begin{align*} 
& \av{\bv} = \av{\bv}_{\delta} = \bv,
& \av{\btau} = \av{\btau}_{\delta} = \btau. 
\end{align*}
To define the jump of a vector
 $\bv\in {\bf H}^{1}(\Th)$, we first define $\bv \odot \n=(\bv \n^{T}+\n \bv^{T})/2$, which is nothing but the symmetric part of the tensor product  $\bv \times \n$. Then, following \cite{ArnoldBrezziFalkMarini_2007}, we set
 \begin{equation}\label{jump:vector}
\begin{aligned}
\salto{\bv}&=\bv^+\odot\n^+ + \bv^-\odot\n^- 
&& \forall F\in \Eho,
&&\salto{\bv}&=\bv\odot\n 
&&\forall F\in \Ehb.
\end{aligned}
\end{equation}
Notice that with this definition $\salto{\bv}$ is a symmetric tensor, i.e.,  $\salto{\bv} \in \mathbb{S}$. 

For a symmetric tensor $\btau \in \vect{\mathcal{L}}^{2}(\Th)$ we define
 \begin{equation*}
\begin{aligned}
&\salto{\btau }=\btau^+\, \n^+ + \btau^- \, \n^-
&& \forall F\in \Eho\;,
\end{aligned}
\end{equation*}
and observe that $\salto{\btau}$ is a vector--valued function. \\

Denoting by $\n_{\K}$ the outward unit normal to $\partial\K$, we shall frequently use the following identity that can be easily checked
\begin{equation}\label{per-tens0}
\sum_{\K\in \Th}  \langle \taub \,   \n_{\K}, \bv \rangle_{\partial \K} 
=\sum_{\K\in \Th} \langle \bv \times \n_{\K} ,  \btau  \rangle_{\partial \K}
= \langle \av{\taub} , \salto{\bv}\rangle_{\Eh} 
+ \langle \salto{\taub}  ,\av{\bv}\rangle_{\Eho}
\end{equation}
for all $\taub  \in \vect{\mathcal L}^{2}(\Th)$ and for all $\bv \in {\bf H}^{1}(\Th)$.
Note  that it also follows 
\begin{equation}\label{eq24}
\begin{aligned}
&\langle \bu^{\pm} , \av{\btau}_{\delta} \n^{\pm} \rangle_{\Eh} 
=  \langle \av{\btau}_{\delta}, \bu^{\pm} \odot \n^{\pm}  \rangle_{\Eh} && \forall \,\delta \in [0,1].
\end{aligned}
 \end{equation}

The \textit{weighted average} of tensors  will be  occasionally expressed as a linear combination of the jump and the standard average operators. In fact, it can be checked that 
\begin{equation}\label{eq:tensor}
\begin{aligned}
\av{\btau}_{\delta} \n^{+} = \av{\btau} \n^{+}  +\frac{(2\delta -1)}{2} \salto{\btau} 
&& \forall \, \delta \in [0,1]
&& \forall F\in \Eho.
\end{aligned}
\end{equation}
Combining now \eqref{eq:tensor} and \eqref{eq24}, one obtains
\begin{align*}
- \langle  \av{\bu^h}_{(1-\delta)} - \av{\bu^h} , \salto{\calD\beps(\bu^h)}\rangle_{\Eho} 
&=  (\delta-1/2)\langle \salto{\calD\beps(\bu^h)},(\bu^{h})^{+}-(\bu^{h})^{-} \rangle_{\Eho} \\
&= \langle  \av{\calD \beps(\bu^{h})}_\delta\,\n^{+} -  \av{\calD \beps(\bu^{h})} \,\n^{+},(\bu^{h})^{+}-(\bu^{h})^{-} \rangle_{\Eho} \\
&= \langle \av{\calD \beps(\bu^{h})}_\delta -  \av{\calD \beps(\bu^{h})},(\bu^{h})^{+} \odot\n^{+} +(\bu^{h})^{-} \odot \n^{-} \rangle_{\Eho}\;,
\end{align*}
which leads to the following identity that will be used often in this work:
\begin{equation}\label{eq:media-mediad}
- \langle  \av{\bu^h}_{(1-\delta)} - \av{\bu^h} , \salto{\calD\beps(\bu^h)}\rangle_{\Eho} 
=\langle \av{\calD \beps(\bu^{h})}_\delta-\av{\calD \beps(\bu^{h})}, \salto{\bu^{h}}\rangle_{\Eho}.
\end{equation}

\begin{remark}\label{rem:def-jump}
In \cite{RiShWhWh03,AntoniettiMazzieriQuarteroniRapetti_2012} the authors considered the following definition of the jump of vector--valued functions,  
\begin{equation*}
\begin{aligned}
\saltotre{\bv}=\bv^+\times \n^+ + \bv^-\times \n^- 
&&\forall F\in \Eho, 
&&\saltotre{\bv}= \bv\times \n  
&&\forall F\in \Ehb\;,
\end{aligned}
\end{equation*}
which is slightly different to that given in \eqref{jump:vector} and considered here. Observe that with the above definition, $\saltotre{\bv}$  is still a tensor but  it is not necessarily symmetric.  Notice though, that for any $\taub \in \vect{\mathcal H}^{1}(\Th)$ the following identity holds
\begin{equation*}
\langle \saltotre{\bv} , \av{\taub}\rangle_{\Eho}=\langle \salto{\bv} , \av{\taub}\rangle_{\Eho}\;.
\end{equation*}
\end{remark}

\noindent {\bf Finite element spaces.}
For  $k\geq 1$ we define the finite element spaces $\V$ and $\bS_h$ as
\begin{align*}
\V
 & =\{ \bu \in \bL^2(\Omega)\ :\ \bu \circ F_\K \in [\calM^{k}(\widehat{\K})]^d \quad \forall\  \K \in \calT_h\},\\
\bS_h & =  \{ \btau \in \vect{\mathcal{L}}^2(\Omega)\ :\ \btau \circ F_\K \in [\calM^{k}(\widehat{\K})]^{d\times d} \quad \forall\ K \in \calT_h \},
\end{align*}
where $\calM^{k}(\widehat{\K})$ is either the space $\IP^{k}(\widehat{\K})$ of polynomials of degree at most $k$ on $\widehat{\K}$, if $\widehat{\K}$ is the reference $d$-simplex, or the space $\IQ^{k}(\widehat{\K})$ of tensor--product polynomials on $\widehat{\K}$ of degree $k$ in each coordinate direction, if $\widehat{\K}$ is the unit reference hypercube in $\IR^d$.\\

%
%
%
\noindent {\bf Technical tools.}
We recall some results that will be used in our the analysis.
Agmon's and trace inequalities valid for any $\bv \in {\bf H}^{1}(\K)$ read as
\begin{subequations}
\begin{align}
\| \bv \|_{0,F} &\lesssim h_K^{-1} \| \bv \|^2_{0,K} + h_K | \bv |_{1,K}^2,
\label{agmon}\\
 h \|\bv \|^{2}_{0,F} & \lesssim \| \bv \|_{1,\K}^2,
\label{trace:0}
\end{align}
\end{subequations} 
for any $F\in \Eh$, $F\subset \partial \K$. We will also use the $L^{p}$-version of the above trace inequality,  which holds for all $\bv \in \vect{W}^{1,p}(\K)$ and reads
\begin{equation}\label{trace:0p}
\begin{aligned}
&h^{1/p} \|\bv \|_{L^{p}(F)} \lesssim  \| \bv \|_{L^{p}(\K)} 
&& \forall\, F\in \Eh, 
&& F\subset \partial \K,
&& 1\leq p\leq \infty\;.
\end{aligned}
\end{equation}
In the above inequalities, the hidden constants are independent of the mesh size but (when applied to discrete polynomials) might depend on the polynomial degree.\\

For discrete functions (scalar, vector and tensor), we will also frequently use the following well known inequalities (see \cite{B-ciarlet} for details and proofs):
let $\omega$ be either an element, an edge or a face of the decomposition $\Th$, and let $\bv$ be a polynomial of degree $k\geq 1$ over $\omega$, then
\begin{equation}\label{ine:00}
\begin{aligned}
&\|\bv \|_{L^{p}(\omega)} \lesssim \mbox{meas}(\omega)^{\left(\frac{1}{p}-\frac{1}{q}\right)} \|\bv \|_{L^{q}(\omega)} && 1\leq p, q\leq \infty \;.
\end{aligned}
\end{equation}
For any $K\in\Th$ and any $F\subset \partial\K$ inverse inequality can be written as\begin{equation}
\label{eq:inverse}
\begin{aligned}
|\bv |_{m, K} 
&\lesssim h_K^{s-m} |\bv |_{s, K} 
&& s\leq m 
&& \forall\, \bv \in \V\;,
\end{aligned}
\end{equation}
where the constants in \eqref{ine:00} and \eqref{eq:inverse} are independent of the mesh size but depend on the polynomial degree $k$.  \\
Notice that in \eqref{eq:inverse} we have also used the inverse inequality.

\section{Discontinuous Galerkin approximations}\label{DG-methods}
In this section, we introduce the family of semidiscrete DG  approximations to  \eqref{eqd:1}--\eqref{eqd:6} that we consider in this work. \\
The derivation of the methods follows  closely \cite{ABCM}, but with a slight difference when introducing the schemes for the displacement formulation. 
We start by considering a  general variational formulation for DG methods:
Find $(\buh ,\bsig^h) \in C^{2}([0,T];\V)\times C^{0}([0,T];\bS_h)$ such that
\begin{align*}
&(\rho \buh_{tt}, \bv )_{\Th} +(\bsig^h , \beps( \bv ))_{\Th}   -\langle \av{\widehat{\bsig}}, \salto{\bv}\rangle_{\Eh} -\langle \salto{\widehat{\bsig}}, \av{\bv}\rangle_{\Eho}=(\bfe, \bv)_{\Th} && \forall\, \bv \in \V,\\
&( \calA \bsig^h , \btau)_{\Th} -( \beps(\bu^h), \btau)_{\Th}-\langle \av{\widehat{\bu}-\bu^h}, \salto{\btau}\rangle_{\Eho}  -\langle \salto{\widehat{\bu}-\bu^h}, \av{\btau}\rangle_{\Eh}  =0 &&\forall\, \btau \in \bS_h,
\end{align*}
where $(\widehat{\bu},\widehat{\bsig}):=\left(\widehat{\bu}(\bu^h,\bsig^h), \widehat{\bsig}(\bu^h, \bsig^h)\right) :\left({\bf H}^{1}(\Th)\times \vect{\mathcal{H}}^1(\Th)\right)^{2}\longrightarrow (\bL^{2}(\Eh),\vect{\mathcal{L}}^{2}(\Eh))$ are the numerical fluxes  that will be defined later on and identify the corresponding DG method. On boundary faces $F\in \Ehb$ we always define the numerical fluxes according to the boundary conditions \eqref{eqd:3}-\eqref{eqd:4}:
\begin{equation*}
\begin{aligned}
 \widehat{\bu}&={\bf 0} 
&& \textrm{ on }  F \in \EhD,
&\widehat{\bu}& ={\bf u}^{h} -\cdd (\bsig^{h} \bn - \bg)
&& \textrm{ on }  F \in \EhN,\\
\widehat{\bsig} \,\bn &=\bsig^{h}\bn -\cuu \bu^{h}
&& \textrm{ on }  F \in \EhD,
&\widehat{\bsig} \,\bn & =  \bg
&& \textrm{ on }  F \in \EhN.\\
\end{aligned}
\end{equation*}
Here, $\cuu$ and $\cdd$ are functions (possibly equal to zero) that we will choose later on.  Then, the DG formulation becomes:
Find $(\bu^{h},\bsig^{h}) \in C^{2}([0,T];\V)\times C^{0}([0,T];\bS_h)$ such that
\begin{subequations}
\label{dg:3}
\begin{align}
&(\rho {\bu}^h_{tt}, \bv )_{\Th} +(\bsig^h , \beps( \bv ))_{\Th}   -\langle \av{\widehat{\bsig}}, \salto{\bv}\rangle_{\Eho}  -\langle \salto{\widehat{\bsig}\, }, \av{\bv}\rangle_{\Eho}&& \label{dg:3a}\\
&\qquad\qquad\qquad+\langle \cuu \bu^{h} , \bv\rangle_{ \EhD} -\langle \bsig^{h} \n, \bv\rangle_{ \EhD} =(\bfe, \bv)_{\Th} +\langle \bg, \bv \rangle_{\calF_h^N} &&\forall\, \bv \in \V,\notag \\
&( \calA \bsig^h , \btau)_{\Th} -( \beps(\bu^h), \btau)_{\Th}-\langle \av{\widehat{\bu}-\bu^h}, \salto{\btau}\rangle_{\Eho} - \langle \salto{\widehat{\bu}-\bu^h}, \av{\btau}\rangle_{\Eho} && \label{dg:3b}\\
&\qquad\qquad \qquad+ \langle \salto{\bu^h}, \av{\btau}\rangle_{\EhD}
+\langle \cdd (\bsig^{h}\bn -\bg), \btau\,\n\rangle_{\EhN} 
=0 \qquad &&\forall\, \btau \in \bS_h\notag.
\end{align}
\end{subequations}

\subsection{DG methods for the  displacement-stress formulation} 
\label{sec:mixedDG}

We present now several methods for approximating the  displacement-stress formulation, by selecting different choices of the numerical fluxes in \eqref{dg:3}.
We restrict our attention to methods for which the numerical fluxes $\widehat{\bu}$ and $\widehat{\bsig}$ are singled valued. As a consequence $\salto{\widehat{\bu}}={\bf 0}$ and $\salto{\widehat{\bsig}}={\bf 0}$ on internal faces. \\
Now, in analogy with the method introduced in \cite{CastilloCockburnPerugiaSchotzau_2000} for second order elliptic problems, the full DG (FDG) approximation is characterized by the choices
 \begin{equation}\label{flux:dg}
\begin{aligned}
&\widehat{\bu} = \av{\bu^{h}}_{1-\delta}    - \cdd \salto{\bsig^{h}},
&&\widehat{\bsig} = \av{\bsig^{h}}_{\delta}  - \cuu \salto{\bu^{h}},
&& F \in \Eho,
\end{aligned}
\end{equation}
where
 \begin{equation}\label{las-ces}
\begin{aligned}
& \cuu=c_1 h_F^{-1}k^{2} \av{\calD}
&& \cdd = c_2 h_F k^{-2} \av{\calD}^{-1}
&& F \in \Eho.
\end{aligned}
\end{equation}
Here $c_1,c_2\geq 0$ are constants (sometimes required to be strictly positive).  On boundary faces,  $\cuu$ and $\cdd$ are defined accordingly. Substituting \eqref{flux:dg}  into  \eqref{dg:3},  we get:
\begin{equation}\label{dg:mixform}
\begin{aligned}
&(\rho {\bu}^h_{tt}, \bv )_{\Th} +(\bsig^h , \beps( \bv ))_{\Th}   -\langle \av{\bsig^h}_{\delta}, \salto{\bv}\rangle_{\Eho}  +\langle \cuu \salto{\bu^{h}}, \salto{\bv}\rangle_{\Eho}&&\\
&\qquad\qquad\qquad+\langle \cuu \bu^{h} , \bv\rangle_{ \EhD} -\langle \bsig^{h} \n, \bv\rangle_{ \EhD} =(\bfe, \bv)_{\Th} +\langle \bg, \bv \rangle_{\calF_h^N} &&\forall\, \bv \in \V,\\
&( \calA \bsig^h , \btau)_{\Th} -( \beps(\bu^h), \btau)_{\Th}-\langle \av{\bu^{h}}_{(1-\delta)}-\av{\bu^h}, \salto{\btau}\rangle_{\Eho} +\langle \cdd\salto{\bsig^{h}}, \salto{\btau}\rangle_{\Eho}  &&\\
&\qquad\qquad \qquad+ \langle \salto{\bu^h}, \av{\btau}\rangle_{\Eho\cup \EhD}
+\langle \cdd (\bsig^{h}\bn -\bg), \btau\,\n\rangle_{\EhN} 
=0 \qquad &&\forall\, \btau \in \bS_h.
\end{aligned}
\end{equation}
Special cases are the \emph{local discontinuous Galerkin} (LDG) method and the \emph{alternating choice of fluxes} (ALT) methods. The former is characterized by setting $\cdd= \bf{0}$, whereas the latter by $\cdd= \cuu= {\bf 0}$ and $\delta=1$ or $\delta=0$. For $\delta=1$ the numerical fluxes become
 \begin{equation}\label{flux:alt}
\widehat{\bu}=(\bu^{h})^{-}, \qquad \quad \widehat{\bsig}=(\bsig^{h})^{+} \,.
  \end{equation}
This choice has been frequently used to design DG approximation for time dependent problems with high order derivatives \cite{shu00,shu0}. To
 our knowledge, the ALT method has never been considered for the elastodynamics problem. In the next section, we will show that stability for this method can be guaranteed only in the case of Dirichlet-type boundary conditions (or periodic boundary conditions, generally used in  \cite{shu00,shu0}, but not realistic in the present context).
\subsection{DG methods for the displacement formulation} 
\label{sec:DGprimal}
We now consider DG methods in displacement formulation that could be regarded as a direct approximation to \eqref{weak:1p}, and so only the displacement $\bu$ is discretized. To obtain the variational formulation starting from  \eqref{dg:3}, the numerical flux $\widehat{\bsig}$ is defined as a function of $\bu^{h}$ only, and the discrete stress tensor $\bsig^{h}$ is eliminated (by setting $\btau=\mathcal{D}\epsilon(\bv)$ in \eqref{dg:3b} and combining the resulting equation with  \eqref{dg:3a}). To allow for such elimination, is implicitly assumed that the finite element spaces $(\V,\bS_h)$ are such that $\beps(\V) \subseteq \bS_h$.  

The definition of the numerical fluxes on boundary faces has to be modified taking into account that $\bsig^{h}\n=\calD\beps(\bu^{h})\n$ on $\Ehb$ (and $\cdd\equiv {\bf 0}$ now). Hence, we have: 
 \begin{equation*}
\begin{aligned}
 \widehat{\bu}&={\bf 0} 
&& \textrm{ on }  F \in \EhD,
&\widehat{\bu}& ={\bf u}^{h} 
&& \textrm{ on }  F \in \EhN,\\
\widehat{\bsig} \,\bn &=\calD\beps(\bu^{h})\n-\SF \bu^{h}
&& \textrm{ on }  F \in \EhD,
&\widehat{\bsig} \,\bn & =  \bg
&& \textrm{ on }  F \in \EhN,\\
\end{aligned}
\end{equation*}
where, to be consistent, we have replaced the parameter $\cuu$ by $\SF$, which plays the same role and scales in the same way (see below for its precise definition), 
but, differently from  $\cuu$,  will undergo to a technical restriction.

For $\delta \in [0,1]$, we specify $\widehat{\bsig}$ as follows 
\begin{equation}\label{def:sig0}
\widehat{\bsig}=
\left\{
\begin{aligned}
&\av{ \calD \beps(\bu^h)}_{\delta} -\SF \salto{\bu^h}
&& F\in \Eho,  \\
&\calD \beps(\bu^h) -\SF \bu^h \bn
&& F\in \EhD,  
\end{aligned}
\right.
\end{equation}
where
\begin{equation}\label{def:penalty_SIP}
\begin{aligned}
\SF=\czz h_F^{-1}k^{2}\av{\calD}   
&& \forall\, F\in \Eho\cup \EhD,
\end{aligned}
\end{equation}
and $\czz$ is a strictly positive constant that has to be chosen sufficiently large, see below.

By setting now  $\btau = \calD \beps(\bv) \in \bS_h$ in the \eqref{dg:3b} we find for all $\bv \in \V$:
\begin{align*}
( \calA \bsig^h , \calD \beps(\bv))_{\Th} = ( \beps(\bu^h), \calD \beps(\bv))_{\Th}+ \langle \av{\widehat{\bu}-\bu^h}, \salto{\calD \beps(\bv)}\rangle_{\Eho} 
  +\langle \salto{\widehat{\bu}-\bu^h}, \av{\calD \beps(\bv)}\rangle_{\Eho\, \cup \,\calF_h^D}\;.
\end{align*}
Since $\calA$ is symmetric and positive definite it holds
\begin{equation*}
\begin{aligned}
&( \calA \bsig^h , \calD \beps(\bv))_{\Th} = (  \bsig^h , \calA^\top\calD \beps(\bv))_{\Th} = (\bsig^h,\beps(\bv))_{\Th},
&& \forall \, \bv \in \V,
\end{aligned}
\end{equation*}
and so,
\begin{equation*}
(\bsig^h,\beps(\bv))_{\Th} = ( \beps(\bu^h), \calD \beps(\bv))_{\Th}+\langle \av{\widehat{\bu}-\bu^h}, \salto{\calD \beps(\bv)}\rangle_{\Eho}
+\langle \salto{\widehat{\bu}-\bu^h}, \av{\calD \beps(\bv)}\rangle_{\Eho\, \cup \,\EhD}.
\end{equation*}
Combining now the above equation together with \eqref{dg:3a} and the definition of numerical flux $\widehat{\bsig}$ given in \eqref{def:sig0}, we finally get the following formulation: 
 Find  $\bu^{h} \in C^{2}([0,T];\V)$ such that 
\begin{multline*}
(\rho {\bu}^h_{tt}, \bv )_{\Th} 
+( \beps(\bu^h), \calD \beps(\bv))_{\Th} 
+\langle \av{\widehat{\bu}-\bu^h}, \salto{\calD \beps(\bv)}\rangle_{\Eho}  \\ 
+\langle \salto{\widehat{\bu}-\bu^h}, \av{\calD \beps(\bv)}\rangle_{\Eho\, \cup \,\calF_h^D} 
-\langle \av{\calD \beps(\bu^{h})}_{\delta}, \salto{\bv}\rangle_{\Eho\, \cup \, \calF_h^D}  \\ 
+  \langle \SF\salto{ \bu^{h}}, \salto{\bv}\rangle_{\Eho\, \cup \, \calF_h^D}
=(\bfe, \bv)_{\Th} 
+\langle \bg, \bv \rangle_{\calF_h^N}, \qquad \forall\, \bv \in \V,
\end{multline*}
which corresponds to the family of classical Interior Penalty (IP) methods. We focus on the symmetric IP, but for completeness we describe the corresponding non-symmetric version.
\subsubsection*{Weighted symmetric interior penalty method.} Following \cite{Stenberg_1998}, to obtain the weighted Symmetric Interior Penalty method (SIP($\delta$))  we define
\begin{equation*}
\begin{aligned}
\widehat{\bu} = \av{\bu^{h}}_{1-\delta}  
&&  \forall \delta \,\in [0,1].
\end{aligned}
\end{equation*}
For  $\delta=1/2$, $\widehat{\bu}=\av{\bu^{h}}$, we get the classical Symmetric Interior Penalty (SIP) method  \cite{arnold_1982}.
\subsubsection*{Non-symmetric and incomplete interior penalty methods.} We take $\delta=1/2$ and define the numerical flux as 
\begin{equation*}
\begin{aligned}
&\widehat{\bu} = \av{\bu^{h}} +\frac{(1+\theta)}{2} \salto{\bu^{h}}\,\n_K, 
\end{aligned}
\end{equation*}
where $\n_{\K}$ is the outward unit normal vector to the element $\K \in \Th$.  For $\theta=1, 0$ we obtain the Non-symmetric Interior Penalty (NIP) and Incomplete Interior Penalty (IIP) methods, respectively.
Observe that for $\theta\ne -1$, the numerical flux is not singled valued. Indeed, as
$\salto{\n_{\K}}=2$, it can be easily checked that $\salto{\widehat{\bu}}= (1+\theta) \salto{\bu^{h}}$.\\

Notice that  all the IP  schemes can be recast in the following variational formulation: Find $\bu^{h} \in C^{2}([0,T];\V)$ such that
\begin{equation}\label{dg:ip3}
(\rho {\bu}^h_{tt}, \bv )_{\Th}   
+ \A(\bu^h, \bv) =(\bfe, \bv)_{\Th} +\langle \bg, \bv \rangle_{\calF_h^N}\quad \forall\, \bv \in \V.
\end{equation}
with $\theta$ defined as before, and
$a(\cdot, \cdot): \V\times \V \longrightarrow \IR$ given by
 \begin{multline}\label{eq:defa}
\A(\bw, \bv)= ( \beps(\bu^h), \calD \beps(\bv))_{\Th}-\langle \av{\calD \beps(\bw)}_{\delta}, \salto{\bv}\rangle_{\Eho\, \cup \, \calF_h^D}\\
+\theta \langle \salto{\bw}, \av{\calD \beps(\bv)}_{\delta} \rangle_{\Eho\, \cup \,\calF_h^D} 
+  \langle \SF\salto{\bw}, \salto{\bv}\rangle_{\Eho\, \cup \, \calF_h^D}.
\end{multline}
In \cite{RiShWhWh03,AntoniettiMazzieriQuarteroniRapetti_2012} the authors consider a variant of the above DG discretization; namely
\begin{equation}\label{extraAA}
(\rho {\bu}^h_{tt}, \bv )_{\Th}   
+ \A(\bu^h, \bv)  +  \langle c_F\salto{\buh_t}, \salto{\bv}\rangle_{\Eho\, \cup \, \calF_h^D}.=(\bfe, \bv)_{\Th} +\langle \bg, \bv \rangle_{\calF_h^N}\quad \forall\, \bv \in \V.
\end{equation}
The extra stabilization term, which has no physical meaning, is required for ensuring (at the theoretical level) the stability of the methods. 
However as we will demonstrate via numerical experiments, the presence of such term might degrade the overall performance of numerical methods (see Section \ref{num_res}).

\section{Stability}\label{stability:sec}
The main goal of this section is to prove stability in the natural energy norm induced by the DG methods described in Section \ref{DG-methods}. We first introduce some notation and state the main stability results for both  displacement-stress and displacement formulations. After discussing these results, we carry out their proofs.\\

For the DG methods in  displacement-stress formulation \eqref{dg:mixform} we define the energy norm
\begin{equation}\label{eq:normMDG}
\normMDG{(\bu^h,\bsig^h)}^{2}= \|\rho^{1/2}\bu^{h}_t\|_{0,\Th}^{2} +\|\mathcal{A}^{1/2}\bsig^{h}\|_{0,\Th}^{2} + \|\cuu^{1/2}\salto{\bu^{h}}\|_{0,\Eho\cup \EhD}^{2} + \|\cdd^{1/2}\salto{\bsig^{h}}\|_{0,\Eho\cup \EhN}^{2}\;,  
\end{equation}
for all  $(\bu^{h},\bsig^h) \in \V\times \bS_h$. For the LDG and ALT methods, one needs to set above $\cdd= {\bf 0}$ and $\cuu= \cdd = {\bf 0}$, respectively. For the DG methods in displacement formulation, the energy norm is defined as
\begin{equation}\label{norm:ip}
\normIP{\bu^{h}}^{2}= \|\rho^{1/2} {\bu}^h_{t}\|_{0,\Th}^{2}  +\|\calD^{1/2} \beps(\bu^h)\|_{0,\Th}^{2}+ \| \SF^{1/2}\salto{ \bu^h}\|_{0,\Eho\cup \EhD}^{2} \quad \forall\, \bu^{h} \in \V.
\end{equation}
For further use, we also define the norm \begin{equation}\label{eq:normA}
\normA{\bu^{h}}^{2} = \|\mathcal{D}^{1/2}\beps{(\bu^{h})}\|_{0,\Th}^{2} + 
\sum_{F \in \Eho\, \cup \, \calF_h^D}\norm{\av{\calD}^{1/2}h_F^{-1/2}\salto{\bu^{h}}}_{0,F}^2.
\end{equation}
The main results of this section are contained in the following two propositions. Since the results and their proofs differ slightly for  displacement-stress and displacement formulations, we have chosen to state these results separately. 
\begin{proposition}\label{stab:mixed}
Let $(\bu^{h},\bsig^{h}) \in \V  \times \bS_h$ be the approximate solution obtained with any of the DG methods for the  displacement-stress formulation introduced in Section~\ref{sec:mixedDG}. 
 \begin{enumerate}
 \item In the absence of external forces, i.e.,  $\bff= \bg= {\bf 0}$,  FDG, LDG and ALT methods are fully conservative: 
\begin{equation*}
\begin{aligned}
\normMDG{(\bu^h(t),\bsig^h(t))} = \normMDG{(\bu^{h}_{0},\bsig^{h}_{0})}, 
&& 0<t\leq T.
\end{aligned}
\end{equation*}
\item If $\bff \in L^{2}((0,T]; \bL^{2}(\O))$ and $\partial \Omega=\Gamma_D$, the FDG, LDG and ALT methods satisfy the following  \emph{a priori} discrete energy estimate: 
\begin{equation*}
\begin{aligned}
& \normMDG{(\bu^h(t), \bsig^h(t))}
\lesssim \normMDG{(\bu^h_0, \bsig^h_0)}
+  T \int_0^t  \rinf^{-1/2} \| \bff(\tau) \|_{0,\O}d\tau
&& 0<t\leq T.
\end{aligned}
 \end{equation*}
 \item If $\bff \in L^{2}((0,T]; \bL^{2}(\O))$ and $\bg\in C^{1}((0,T]; {\bf H}^{1}(\Gamma_N))$, the FDG and LDG methods satisfy the following \emph{a priori} discrete energy estimate: for all $0<t\leq T$
 \begin{equation*}
 \normMDG{(\bu^h(t),\bsig^h(t))} 
\lesssim   \sqrt{ \mathcal{G}_{\textrm{MDG}}}
+ T  \int_0^t \left( 
\rinf^{-1/2} \| \bfe(\tau) \|_{0,\O}
+  \Dinf^{-1/2} \norm{\bg_{\tau}(\tau)}_{1,\Gamma_N} 
\right)  \, d\tau,
 \end{equation*} 
where $\bg_{\tau}$ denotes the time derivative of $\bg$ and

 \begin{equation*}
\mathcal{G}_{\textrm{MDG}} =\normMDG{(\bu^h_0, \bsig^h_0)}^{2} 
+\Dinf^{-1} (\norm{\bg_0}_{1,\Gamma_N}^2+\sup_{0<t\leq T}
\norm{\bg(t)}_{1,\Gamma_N}^2 
 )+T \int_0^t \Dinf^{-1}  \norm{\bg_{\tau}}_{1,\Gamma_N} 
\|\bg\|_{1/2, \Gamma_N} \,d\tau \;.
 \end{equation*} 
  \end{enumerate}
\end{proposition}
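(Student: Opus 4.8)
The plan is to reproduce the continuous energy argument of Lemma~\ref{le:0} at the discrete level, the crux being to select the two test functions so that the displacement--stress coupling cancels identically. Concretely, I would first test the momentum equation \eqref{dg:mixform}$_2$... more precisely the first equation of \eqref{dg:mixform} with $\bv=\bu^h_t$. For the constitutive relation I would \emph{not} test it directly: since the second equation of \eqref{dg:mixform} holds for every fixed $\btau\in\bS_h$ and all $t$, I would differentiate it in time first and only then set $\btau=\bsig^h$. In the momentum equation the $\bsig^h$-coupling collects into $b(\bsig^h,\bu^h_t)$, where $b(\btau,\bv):=(\btau,\beps(\bv))_{\Th}-\langle\av{\btau}_\delta,\salto{\bv}\rangle_{\Eho}-\langle\btau\n,\bv\rangle_{\EhD}$; in the time-differentiated constitutive equation the $\bu^h$-coupling collects into $-b(\bsig^h,\bu^h_t)$, the matching of the two being precisely identity \eqref{eq:media-mediad} together with \eqref{eq24} (this is exactly where the \emph{dual} weightings $\av{\cdot}_{1-\delta}$ for $\bu^h$ versus $\av{\cdot}_\delta$ for $\bsig^h$ in \eqref{flux:dg} are exploited). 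Adding the two relations cancels the coupling and reassembles the volume and penalty terms as exact time derivatives, producing the master identity
\begin{equation*}
\frac{1}{2}\frac{d}{dt}\normMDG{(\bu^h,\bsig^h)}^{2}=(\bff,\bu^h_t)_{\Th}+\langle\bg,\bu^h_t\rangle_{\calF_h^N}+\langle\cdd\bg_t,\bsig^h\n\rangle_{\EhN}.
\end{equation*}
I would stress that differentiating the constitutive law \emph{before} testing is essential: testing the undifferentiated equation with $\bsig^h_t$ leaves the uncancelled remainder $b(\bsig^h,\bu^h_t)-b(\bsig^h_t,\bu^h)$.

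Parts \emph{(i)} and \emph{(ii)} then follow quickly. For \emph{(i)}, with $\bff=\bg={\bf 0}$ (hence $\bg_t={\bf 0}$) the right-hand side vanishes, so $\normMDG{(\bu^h,\bsig^h)}$ is constant; for LDG and ALT one simply drops the null penalty terms, ALT being admissible here only because with Dirichlet-type (or periodic) data the Neumann contributions that would spoil the cancellation are absent. For \emph{(ii)}, since $\partial\Omega=\Gamma_D$ the two Neumann terms vanish and only $(\bff,\bu^h_t)_{\Th}$ survives; bounding it by $\rinf^{-1/2}\|\bff\|_{0,\O}\normMDG{(\bu^h,\bsig^h)}$ through \eqref{material:0} and Cauchy--Schwarz, integrating in time, and applying Gronwall's Lemma~\ref{Gronwall} with $y=\normMDG{(\bu^h,\bsig^h)}$ delivers an estimate of the stated form.

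The genuinely delicate case is \emph{(iii)}, where the Neumann work term $\langle\bg,\bu^h_t\rangle_{\calF_h^N}$ must be controlled even though neither $\bu^h_t$ nor $\bu^h$ contributes a boundary trace to $\normMDG{\cdot}$. Mirroring the continuous proof, I would integrate this term by parts in time via \eqref{eq:ibp_formula}, trading $\bu^h_t$ for the endpoint pairings $\langle\bg(t),\bu^h(t)\rangle_{\calF_h^N}$, $\langle\bg_0,\bu^h_0\rangle_{\calF_h^N}$ and the integral $\int_0^t\langle\bg_\tau,\bu^h\rangle_{\calF_h^N}\,d\tau$. Each is a boundary pairing of $\bg$ (or $\bg_\tau$) against a trace of $\bu^h$, which I would estimate by combining the trace inequality \eqref{trace:0} with the duality on $\Gamma_N$ coupling the $H^1(\Gamma_N)$ and $H^{1/2}(\Gamma_N)$ norms of the data to the boundary trace of $\bu^h$, the latter reduced to the broken $H^1$-norm of $\bu^h$ by a discrete Korn/Poincar\'e inequality (available since $\mathrm{meas}(\Gamma_D)>0$). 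The subtlety is that $\|\calD^{1/2}\beps(\bu^h)\|_{0,\Th}$ is \emph{not} part of the mixed energy norm, so I would first recover it by testing the discrete constitutive relation with $\btau=\calD\beps(\bu^h)$, which expresses $\|\calD^{1/2}\beps(\bu^h)\|_{0,\Th}^{2}$ through $(\bsig^h,\beps(\bu^h))_{\Th}$ and controllable jump terms, yielding $\normA{\bu^h}\lesssim\normMDG{(\bu^h,\bsig^h)}$ up to lower-order contributions. The remaining term $\langle\cdd\bg_t,\bsig^h\n\rangle_{\EhN}$ is easier: using the scaling $\cdd=c_2 h_F k^{-2}\av{\calD}^{-1}$ from \eqref{las-ces} and the trace inequality it is bounded by $\Dinf^{-1/2}\norm{\bg_t}_{1,\Gamma_N}$ times $\|\cdd^{1/2}\salto{\bsig^h}\|_{0,\EhN}\le\normMDG{(\bu^h,\bsig^h)}$. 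Inserting all bounds into the time-integrated identity and applying Gronwall once more yields the claimed estimate, with $\Dinf^{-1}$ and the $H^1(\Gamma_N)$, $H^{1/2}(\Gamma_N)$ norms of $\bg$ entering exactly through these trace and duality steps.

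I expect the main obstacle to be precisely this control of the boundary traces of $\bu^h$ on $\Gamma_N$ in \emph{(iii)}: because the mixed energy norm measures $\bsig^h$ but not $\beps(\bu^h)$, the step of establishing $\normA{\bu^h}\lesssim\normMDG{(\bu^h,\bsig^h)}$ from the discrete constitutive equation, and then safely converting it into a trace bound with correctly tracked $\Dinf$-dependence, is what requires the most care; everything else is the cancellation identity, Cauchy--Schwarz, and Gronwall. This also explains why ALT is excluded from \emph{(iii)}: its alternating flux destroys the cross-term cancellation on $\EhN$.
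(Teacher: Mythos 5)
Your proposal is correct and follows essentially the same route as the paper: testing the momentum equation with $\bu^h_t$, time-differentiating the constitutive relation before testing with $\bsig^h$, cancelling the coupling via \eqref{eq:media-mediad} to obtain the exact energy identity, and then handling the Neumann work term in \emph{(iii)} by integration by parts in time plus trace/discrete Korn estimates and the bound $\normA{\bu^h}\lesssim\normMDG{(\bu^h,\bsig^h)}$ obtained from the constitutive equation tested with $\calD\beps(\bu^h)$ --- which is precisely the content of the paper's Lemmas~\ref{le:aux00} and~\ref{le:aux04}. The steps you single out as delicate (differentiating before testing, and controlling the boundary trace of $\bu^h$ through the stress variable) are exactly the ones the paper isolates as auxiliary lemmas.
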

For the IP($\delta$) method, the stability result reads as follows.
\begin{proposition}\label{stab:prim}
 Let $\bu^{h} \in \V$ be the  approximate solution obtained with the SIP($\delta$) method introduced in Section~\ref{sec:DGprimal}. Moreover, suppose that the penalty parameter $\czz$ appearing in the definition of the penalty function  \eqref{def:penalty_SIP} is chosen large enough. Then,
 \begin{enumerate}
 \item In the absence of external forces, i.e.,  $\bff= \bg= {\bf 0}$,
\begin{equation*}
\begin{aligned}
& \normIP{\bu^{h}(t)}
\lesssim \normIP{\bu^{h}_{0}},  
&& 0<t\leq T;
\end{aligned}
\end{equation*}
\item If $\bff \in L^{2}((0,T]; \bL^{2}(\O))$ and $\bg\in C^{1}((0,T]; {\bf H}^{1}(\Gamma_N))$, then
\begin{equation*}\label{stab:primal_II}
\begin{aligned}
&\normIP{\bu^{h}(t)}^{2} \lesssim 
\sqrt{\mathcal{G}_{\textrm{IP}}} 
+T \int_0^t  \left( \rinf^{-1} \| \bff(\tau) \|_{0,\O} +  \|\bg_{\tau}(\tau)\|_{1, \Gamma_N}\right) \, d\tau
&& 0<t\leq T,
\end{aligned}
\end{equation*}
where
\begin{equation*}
\mathcal{G}_{\textrm{IP}} =\normIP{\bu^{h}_{0}}^{2} 
+\Dinf^{-1} \sup_{0<t\leq T} \|\bg(t)\|_{1, \Gamma_N}^{2} 
+ \Dinf^{-1} \|\bg_{0}\|_{1, \Gamma_N}^{2}.
\end{equation*}
\end{enumerate}
\end{proposition}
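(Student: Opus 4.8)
The plan is to run the classical energy argument for second-order hyperbolic problems, testing the semidiscrete equation \eqref{dg:ip3} against the velocity $\bv=\bu^h_t$. Since we focus on the symmetric method ($\theta=-1$), the form $\A(\cdot,\cdot)$ in \eqref{eq:defa} is symmetric, so $\A(\bu^h,\bu^h_t)=\tfrac12\frac{d}{dt}\A(\bu^h,\bu^h)$, while $(\rho\bu^h_{tt},\bu^h_t)_{\Th}=\tfrac12\frac{d}{dt}\|\rho^{1/2}\bu^h_t\|_{0,\Th}^2$. This turns \eqref{dg:ip3} into an identity for the time derivative of the discrete energy $E(t):=\|\rho^{1/2}\bu^h_t\|_{0,\Th}^2+\A(\bu^h,\bu^h)$. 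Before proceeding, I would record the coercivity and continuity of $\A$: for $\czz$ large enough, a trace/inverse estimate (\eqref{trace:0}, \eqref{eq:inverse}) applied to the consistency term $\langle\av{\calD\beps(\bu^h)}_\delta,\salto{\bu^h}\rangle$, followed by Young's inequality, gives $\A(\bu^h,\bu^h)\gtrsim\normA{\bu^h}^2$ with $\normA{\cdot}$ as in \eqref{eq:normA}, and the reverse continuity bound holds as well. Consequently $E(t)$ is equivalent to $\normIP{\bu^h(t)}^2$, and this equivalence is what converts control of $E$ into the stated bounds.

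For part \emph{(i)}, with $\bff=\bg={\bf 0}$ the right-hand side of the tested equation vanishes, so $\frac{d}{dt}E(t)=0$ and $E$ is conserved; the coercivity/continuity equivalence then yields $\normIP{\bu^h(t)}\lesssim\normIP{\bu^h_0}$ directly. For part \emph{(ii)}, integrating the tested identity over $(0,t)$ gives $\tfrac12 E(t)=\tfrac12 E(0)+\int_0^t(\bff,\bu^h_\tau)_{\Th}\,d\tau+\int_0^t\langle\bg,\bu^h_\tau\rangle_{\calF_h^N}\,d\tau$. The volume term is immediate: $|(\bff,\bu^h_\tau)_{\Th}|\le\rinf^{-1/2}\|\bff\|_{0,\O}\,\normIP{\bu^h}$.

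The Neumann term is the crux, because the displacement formulation penalizes only $\Eho\cup\EhD$ and gives no direct control of $\bu^h_t$ on $\Gamma_N$. I would remove the time derivative from $\bu^h$ by the integration-by-parts-in-time formula \eqref{eq:ibp_formula}, producing the boundary contributions $\langle\bg(t),\bu^h(t)\rangle_{\calF_h^N}-\langle\bg_0,\bu^h_0\rangle_{\calF_h^N}-\int_0^t\langle\bg_\tau,\bu^h\rangle_{\calF_h^N}\,d\tau$. Each pairing is then bounded by combining the discrete trace inequality \eqref{trace:0} with a broken Korn--Poincaré inequality (valid because $\mathrm{meas}(\Gamma_D)>0$) to obtain $\|\bu^h\|_{0,\Gamma_N}\lesssim\Dinf^{-1/2}\normA{\bu^h}\lesssim\Dinf^{-1/2}\normIP{\bu^h}$; this is what forces the data to be measured in $\|\bg\|_{1,\Gamma_N}$ and produces the factors $\Dinf^{-1}$ appearing in $\mathcal G_{\mathrm{IP}}$.

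Finally, I would absorb the pointwise-in-time term $\langle\bg(t),\bu^h(t)\rangle_{\calF_h^N}$ at the top time into the left-hand energy via Young's inequality (or into the running supremum that appears in $\mathcal G_{\mathrm{IP}}$), assemble the remaining terms into the form $y^2(t)\lesssim\mathcal G_{\mathrm{IP}}+\int_0^t\varphi(\tau)\,y(\tau)\,d\tau$ with $y(t)=\normIP{\bu^h(t)}$ and $\varphi\sim\rinf^{-1/2}\|\bff\|_{0,\O}+\|\bg_\tau\|_{1,\Gamma_N}$, and conclude with Gronwall's Lemma~\ref{Gronwall}. The main obstacle is precisely this Neumann boundary term: neither $\bu^h$ nor $\bu^h_t$ is controlled on $\Gamma_N$ by the energy norm, so the argument hinges on the integration by parts in time together with a discrete Korn/Poincaré estimate for the broken space — the technical ingredient I expect to be relegated to the auxiliary appendix.
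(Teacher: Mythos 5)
Your overall architecture coincides with the paper's: test \eqref{dg:ip3} with $\bv=\bu^h_t$, exploit the symmetry of $a(\cdot,\cdot)$ for $\theta=-1$ to write the left-hand side as the time derivative of a discrete energy (this is exactly \eqref{eq:stab_SIPG}), use \eqref{coso:5} with $\czz$ large to show that energy is equivalent to $\normIP{\bu^h}^2$, integrate in time, estimate the Neumann pairing by integrating by parts in time via \eqref{eq:ibp_formula}, control $\bu^h$ on $\Gamma_N$ through discrete Korn/Poincar\'e, and conclude with Gronwall. Parts \emph{(i)} and the volume forcing term are handled exactly as in the paper, and you correctly predicted that the boundary estimate is outsourced to an appendix lemma (Lemma~\ref{le:aux00}, estimate \eqref{aux:01}).

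The one step where your justification would fail as written is the claim that ``the discrete trace inequality \eqref{trace:0} combined with a broken Korn--Poincar\'e inequality'' yields $\|\bu^h\|_{0,\Gamma_N}\lesssim\Dinf^{-1/2}\normA{\bu^h}$. Applied face by face, \eqref{trace:0} (equivalently \eqref{agmon}) gives only $\|\bu^h\|^2_{0,\Gamma_N}\lesssim h^{-1}\|\bu^h\|^2_{0,\Th}+h\,|\bu^h|^2_{1,\Th}$, i.e.\ a bound that degenerates like $h^{-1/2}$ and cannot be absorbed uniformly; a genuinely $h$-uniform broken trace theorem on $\partial\Omega$ exists but is not among the tools the paper sets up, and your sketch does not supply one. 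The paper circumvents this by \emph{not} pairing $\bg$ and $\bu^h$ in $L^2(\Gamma_N)$: in Lemma~\ref{le:aux00} it uses H\"older with the specific exponent $p=(2d-2)/d$, the $L^p$ trace inequality \eqref{trace:0p} and the local norm-equivalence \eqref{ine:00}, chosen precisely so that the powers of $h$ cancel, and then the Sobolev embedding $H^1(F)\hookrightarrow L^q(F)$ to control $\|\bg\|_{L^q(F)}$. This is the actual mechanism that forces $\bg\in C^1((0,T];{\bf H}^1(\Gamma_N))$ in the hypotheses; in your version the appearance of $\|\bg\|_{1,\Gamma_N}$ is asserted but does not follow from the argument you give (a plain $L^2$ Cauchy--Schwarz would only ever produce $\|\bg\|_{0,\Gamma_N}$). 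Since this trace/duality juggling is the only nontrivial technical point of the whole proof, you should either prove an $h$-uniform broken trace inequality on $\Gamma_N$ or reproduce the paper's $L^p$--$L^q$ argument explicitly.
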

We next discuss the stability results, putting them also in context with previous analysis.
\begin{enumerate}
\item In the case of boundary conditions of mixed type, we note that Proposition ~\ref{stab:mixed} and Proposition~\ref{stab:prim} require further regularity on the traction boundary data $\bg$  compared the one required for the continuous problem. Whether this is a technical restriction due to an artifact of our proof or really needed to ensure stability of the methods is not clear at the present time but will be subject of future research. The restriction comes into play from the proof of Lemma~\ref{le:aux00}, see below, although such proof try to mimic the corresponding one for the continuous problem.

\item  For displacement formulation, our analysis applies to symmetric IP; it does not cover the non-symmetric NIP ($\theta=1$) and IIP ($\theta=0$) methods, cf. Section~\ref{sec:DGprimal}, but also seem to indicate they are not the most natural option to discretize this problem. The nonsymmetry of the bilinear form $a(\cdot,\cdot)$ precludes for showing stability with the present analysis.
In \cite{RiShWhWh03,AntoniettiMazzieriQuarteroniRapetti_2012} the authors consider the variant of the IP  discretization described in \eqref{extraAA} with an  extra stabilization term that penalizes the time derivative of the displacement.  The  physical meaning of such extra penalization is unclear, but allows for carrying out the proof of stability. However, such artifact seems to degrade the overall performance of numerical methods (see Section \ref{num_res}).
\end{enumerate}

We next state two auxiliary results that will be required to prove Propositions \ref{stab:mixed} and \ref{stab:prim}: their proofs are given in Appendix~\ref{sec:ProofAuxiliary}. 

\begin{lemma}\label{le:aux00}
 Let $\bff \in L^{2}((0,T]; \bL^{2}(\O))$ and $\bg\in C^{1}((0,T]; {\bf H}^{1/2}(\Gamma_N))$. Let $(\bu^{h}, \bsig^h) \in \V \times \bS_h $ be the
DG approximation to the  solution $(\bu, \bsig)$ of problem \eqref{eqd:1}--\eqref{eqd:6} obtained with any of the DG methods introduced in Section~\ref{DG-methods}. Then, the following bounds hold:
 \begin{align}
\left| \int_0^t  ( \bfe(\tau) ,  \bu^{h}_\tau(\tau) )_{\Th} \,d\tau \right| &\leq t  \int_0^t  \rinf^{-1/2} \| \bfe(\tau) \|_{0,\O}
\|\rho^{1/2} \bu^{h}_\tau(\tau) \|_{0,\Th}  \, d\tau\;, \label{aux:00}&&\\
\left|  \int_0^t  \langle \cdd \bg_\tau(\tau), \bsig^{h}(\tau)\,\n\rangle_{\EhN} \,d\tau \right| &\lesssim t\int_0^t   \Dinf^{-1/2} \|\bg_\tau(\tau)\|_{1/2, \Gamma_N} \|\cdd^{1/2} \bsig^{h}(\tau)\n\|_{0,\EhN} d\tau, &&\label{aux:02}
\end{align}
where $\cdd$ is defined as in \eqref{las-ces} and
$\Dinf$, $\rinf$ are given in \eqref{cota:D} and \eqref{material:0}, respectively. Furthermore, if $\bg\in C^{1}((0,T]; {\bf H}^{1}(\Gamma_N))$, then for any $\epsilon >0$, it holds
\begin{multline}\label{aux:01}
  \abs{ \int_0^t  \langle \bg(\tau), \bu^{h}_{\tau}(\tau)\rangle_{0,\EhN} \,d\tau}   
\lesssim
\epsilon \normA{\bu^{h}(t)}^2 
+  \Dinf^{-1/2}  \normA{\bu^{h}_0}\norm{\bg_{0}}_{1,\Gamma_N}
+
\frac{ \Dinf^{-1}}{\epsilon}
\norm{\bg (t)}_{1,\Gamma_N}^2 \\
+ t\, \int_0^t  \Dinf^{-1/2} \|\bg_{\tau} (\tau)\|_{1,\Gamma_N}\normA{\bu^{h }(\tau)} \,d\tau.
\end{multline}
\end{lemma}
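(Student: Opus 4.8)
The plan is to handle the three inequalities separately: \eqref{aux:00} and \eqref{aux:02} reduce to Cauchy--Schwarz combined with the structural bounds \eqref{material:0} and \eqref{cota:D} and the explicit weights \eqref{las-ces}, whereas \eqref{aux:01} genuinely requires integration by parts in time together with a trace-type control of the Neumann boundary trace of $\bu^h$.

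For \eqref{aux:00} I would apply Cauchy--Schwarz to $(\bfe(\tau),\bu^h_\tau(\tau))_{\Th}$ pointwise in $\tau$ and then insert the mass weight by writing $\|\bu^h_\tau\|_{0,\Th}\le\rinf^{-1/2}\|\rho^{1/2}\bu^h_\tau\|_{0,\Th}$, which is legitimate by \eqref{material:0}; integrating over $(0,t)$ closes the bound (Cauchy--Schwarz in fact yields the estimate without the prefactor $t$, which is retained, since $t\le T$, only to streamline the later Gronwall step). For \eqref{aux:02} I would pair in the $\cdd$-weighted inner product on $\EhN$, so that $|\langle\cdd\bg_\tau,\bsig^h\n\rangle_{\EhN}|\le\|\cdd^{1/2}\bg_\tau\|_{0,\EhN}\,\|\cdd^{1/2}\bsig^h\n\|_{0,\EhN}$, and then estimate $\|\cdd^{1/2}\bg_\tau\|_{0,\EhN}\lesssim\Dinf^{-1/2}\|\bg_\tau\|_{1/2,\Gamma_N}$ using the definition \eqref{las-ces} of $\cdd$ (so that the weight scales like $h_F\Dinf^{-1}$), the lower bound in \eqref{cota:D}, and a scaled trace inequality; integrating in time finishes.

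The delicate estimate is \eqref{aux:01}. Here I would first use the integration-by-parts formula \eqref{eq:ibp_formula} to move $\partial_\tau$ off $\bu^h$ onto $\bg$, producing the two endpoint pairings $\langle\bg(t),\bu^h(t)\rangle_{\EhN}$ and $\langle\bg_0,\bu^h_0\rangle_{\EhN}$ and the residual integral $-\int_0^t\langle\bg_\tau,\bu^h\rangle_{\EhN}\,d\tau$. Each boundary pairing $\langle\bg,\bu^h\rangle_{\EhN}$ must then be bounded through $\normA{\bu^h}$; this is the crux, because $\normA{\cdot}$ in \eqref{eq:normA} penalizes jumps only on $\Eho\cup\EhD$ and carries \emph{no} control of the trace of $\bu^h$ on the Neumann faces. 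To bridge this gap I would invoke a broken Korn--Poincaré inequality (available since $\textrm{meas}(\Gamma_D)>0$) to control the broken ${\bf H}^{1}$-norm of $\bu^h$, and hence its trace on $\Gamma_N$, by $\Dinf^{-1/2}\normA{\bu^h}$, at the cost of measuring $\bg$ in the stronger ${\bf H}^{1}(\Gamma_N)$-norm. Applying Young's inequality with parameter $\epsilon$ to the $\tau=t$ endpoint then gives $\epsilon\normA{\bu^h(t)}^2+\epsilon^{-1}\Dinf^{-1}\|\bg(t)\|_{1,\Gamma_N}^2$; a plain Cauchy--Schwarz on the $\tau=0$ endpoint (no Young's, since it feeds the initial-data term inside $\sqrt{\mathcal G}$) gives $\Dinf^{-1/2}\normA{\bu^h_0}\|\bg_0\|_{1,\Gamma_N}$; and Cauchy--Schwarz plus the same trace control on the residual integral delivers the final term.

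I expect the broken Korn--Poincaré / trace control of $\bu^h$ on $\Gamma_N$ to be the main obstacle: it is exactly the step where the discontinuity of the space and the mixed boundary conditions force the regularity of $\bg$ to be raised from ${\bf H}^{1/2}(\Gamma_N)$ to ${\bf H}^{1}(\Gamma_N)$, precisely the restriction the authors flag in the discussion after Proposition~\ref{stab:prim}. The remaining work, namely tracking the powers of $\Dinf$ and the mesh-dependent constants through the trace and inverse inequalities \eqref{trace:0}--\eqref{eq:inverse} to recover the stated weights, is routine but must be carried out carefully.
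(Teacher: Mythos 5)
Your treatments of \eqref{aux:00} and \eqref{aux:02} match the paper's: Cauchy--Schwarz plus \eqref{material:0} for the first, and for the second the pairing in the $\cdd$-weighted product followed by $\|\cdd^{1/2}\bg_\tau\|_{0,\EhN}\lesssim \Dinf^{-1/2}\|\bg_\tau\|_{1/2,\Gamma_N}$ --- though note that the paper obtains this last bound by first lifting $\bg_\tau$ to ${\bf H}^{1}(\Omega)$ with the continuous right inverse of the trace operator and then applying the elementwise trace inequality \eqref{trace:0} to the lifting; a ``scaled trace inequality'' invoked directly on $\Gamma_N$ does not by itself produce the $H^{1/2}(\Gamma_N)$ norm.

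For \eqref{aux:01} your skeleton (integration by parts in time via \eqref{eq:ibp_formula}, Young with $\epsilon$ at the endpoint $t$, plain Cauchy--Schwarz at $0$) is the paper's, but the step you defer --- bounding $|\langle\bg,\bu^{h}\rangle_{\EhN}|$ by $\Dinf^{-1/2}\|\bg\|_{1,\Gamma_N}\normA{\bu^{h}}$ --- is the entire content of the estimate, and ``broken Korn--Poincar\'e, hence trace control on $\Gamma_N$'' does not deliver it. The broken Korn--Poincar\'e inequality gives $\|\bu^{h}\|^2_{0,\Th}+|\bu^{h}|^2_{1,\Th}\lesssim\Dinf^{-1}\normA{\bu^{h}}^2$, and the paper uses exactly this; but passing from the broken ${\bf H}^{1}$ norm to the $L^2(\Gamma_N)$ trace is not mesh-uniform for discontinuous functions: the elementwise trace inequality \eqref{agmon} costs a factor $h^{-1/2}$, and $\normA{\cdot}$ penalizes jumps only on $\Eho\cup\EhD$, so there is no boundary term on $\Gamma_N$ available to absorb it. (Moreover, if a mesh-uniform $L^2(\Gamma_N)$ trace bound were at hand, the pairing would close with $\bg\in \bL^{2}(\Gamma_N)$ only, contradicting your own claim that this is the step where ${\bf H}^{1}(\Gamma_N)$ regularity enters.) The paper's actual device is different: on each Neumann face it applies H\"older's inequality with $p=(2d-2)/d$, the $L^p$ trace inequality \eqref{trace:0p}, and the polynomial norm equivalence \eqref{ine:00} --- the inverse estimate exactly cancels the negative power of $h$ coming from the trace --- to obtain $|\int_F \bg\cdot\bu^{h}\,ds|\lesssim\|\bg\|_{L^{q}(F)}\|\bu^{h}\|_{1,\K}$ with $q=(2d-2)/(d-2)$, and it is the Sobolev embedding $H^{1}(F)\hookrightarrow L^{q}(F)$ that forces $\bg\in{\bf H}^{1}(\Gamma_N)$. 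Only after this does the discrete Korn--Poincar\'e step convert $\|\bu^{h}\|_{1,\Th}$ into $\Dinf^{-1/2}\normA{\bu^{h}}$. You correctly located the obstacle, but the idea that overcomes it is missing from your argument.
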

The following result provides a bound of the norm of the symmetric discrete gradient in terms of the discrete stress tensor, and will be required in proof of Proposition~\ref{stab:mixed}.
\begin{lemma}\label{le:aux04} 
 Let $\bff \in L^{2}((0,T]; \bL^{2}(\O))$ and $\bg\in C^{1}((0,T]; {\bf H}^{1/2}(\Gamma_N))$. Let $(\bu^{h},\bsig^{h}) \in \V  \times \bS_h$ be the approximate solution to \eqref{eqd:1}-\eqref{eqd:6} obtained with the FDG or the LDG methods introduced in Section~\ref{sec:mixedDG}. Then, the following bound holds:
\begin{equation}\label{eq:aux04}
\|\mathcal{D}^{1/2}\beps{(\bu^{h})}\|_{0,\Th}
\lesssim 
\|\mathcal{A}^{1/2}\bsig^{h} \|_{0,\Th}
+ 
\| \cuu^{1/2} \salto{\bu^h}\|_{0,\Eho}
+
 \| \cdd^{1/2} \salto{\bsig^h} \|_{0,\EhN}
+
  \Dinf^{-1}\| \bg \|_{1/2, \Gamma_N}^{2},
\end{equation}
where $\Dinf$ is the lower bound on the stiffness operator $\mathcal{D}$ as given in \eqref{cota:D}.
For the LDG method the last two terms on the right hand side are not present in the bound.
\end{lemma}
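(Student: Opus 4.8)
The plan is to test the second (constitutive) equation of \eqref{dg:mixform} with the discrete symmetric tensor $\btau=\calD\beps(\bu^h)$, which lies in $\bS_h$ by the standing assumption $\calD\beps(\V)\subseteq\bS_h$. Since the resulting volume term $(\beps(\bu^h),\calD\beps(\bu^h))_{\Th}=\norm{\calD^{1/2}\beps(\bu^h)}_{0,\Th}^2$ is exactly the quantity to be bounded, this choice isolates the strain energy on one side and leaves the discrete stress together with a finite collection of face terms on the other. The first step is thus simply to solve the tested equation for $\norm{\calD^{1/2}\beps(\bu^h)}_{0,\Th}^2$, after which everything reduces to estimating each of the remaining contributions against $\norm{\calD^{1/2}\beps(\bu^h)}_{0,\Th}$.

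For the leading term I would use that $\calA$ is symmetric and $\calD=\calA^{-1}$ to rewrite $(\calA\bsig^h,\calD\beps(\bu^h))_{\Th}=(\bsig^h,\beps(\bu^h))_{\Th}=(\calA^{1/2}\bsig^h,\calD^{1/2}\beps(\bu^h))_{\Th}$, which by Cauchy--Schwarz is bounded by $\norm{\calA^{1/2}\bsig^h}_{0,\Th}\,\norm{\calD^{1/2}\beps(\bu^h)}_{0,\Th}$. The term carrying the weighted average, namely $-\langle\av{\bu^h}_{(1-\delta)}-\av{\bu^h},\salto{\calD\beps(\bu^h)}\rangle_{\Eho}$, is first transformed with identity \eqref{eq:media-mediad} (equivalently \eqref{eq:tensor}) into $\langle\av{\calD\beps(\bu^h)}_\delta-\av{\calD\beps(\bu^h)},\salto{\bu^h}\rangle_{\Eho}$; this moves the $\delta$-dependence onto the consistency pairing and makes $\salto{\bu^h}$ appear explicitly, so that it can be matched against the penalty weight $\cuu$.

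The heart of the argument is then the face-by-face estimation. For each remaining pairing I would apply Cauchy--Schwarz after inserting the weights that reconstruct the penalty functions \eqref{las-ces}: the consistency terms $\langle\salto{\bu^h},\av{\calD\beps(\bu^h)}\rangle_{\Eho\cup\EhD}$ and the rewritten average term split as $\norm{\av{\calD}^{1/2}h_F^{-1/2}\salto{\bu^h}}_{0,F}\cdot\norm{\av{\calD}^{-1/2}h_F^{1/2}\av{\calD\beps(\bu^h)}}_{0,F}$, where the first factor sums to $\norm{\cuu^{1/2}\salto{\bu^h}}_{0,\Eho}$ (Dirichlet faces contributing the $\EhD$ piece) while the second is controlled by $\norm{\calD^{1/2}\beps(\bu^h)}_{0,\Th}$ through the trace inequality \eqref{trace:0} and the inverse inequality \eqref{eq:inverse}, using $h_K\approx h_F$ and bounded local variation to handle $\av{\calD}$ on both sides of $F$. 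The interior stress-penalty term $\langle\cdd\salto{\bsig^h},\salto{\calD\beps(\bu^h)}\rangle_{\Eho}$ and the Neumann boundary term $\langle\cdd\,\bsig^h\n,\calD\beps(\bu^h)\n\rangle_{\EhN}$ are handled identically, now with the scaling $\cdd\sim h_Fk^{-2}\av{\calD}^{-1}$, so that $\norm{\cdd^{1/2}\salto{\calD\beps(\bu^h)}}$ is again absorbed into $\norm{\calD^{1/2}\beps(\bu^h)}_{0,\Th}$ and the jump quantities $\norm{\cdd^{1/2}\salto{\bsig^h}}$ are produced; for LDG one has $\cdd={\bf 0}$, so both of these terms vanish. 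Finally the Neumann data part $\langle\cdd\,\bg,\calD\beps(\bu^h)\n\rangle_{\EhN}$ is bounded by $\norm{\cdd^{1/2}\bg}_{0,\EhN}\norm{\calD^{1/2}\beps(\bu^h)}_{0,\Th}$, and summing the $h_F$-weighted face norms against the trace of $\bg$ gives $\norm{\cdd^{1/2}\bg}_{0,\EhN}\lesssim\Dinf^{-1/2}\norm{\bg}_{1/2,\Gamma_N}$.

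Collecting the estimates at the level of $\norm{\calD^{1/2}\beps(\bu^h)}_{0,\Th}^2$ — keeping the leading, consistency and penalty contributions as products against $\norm{\calD^{1/2}\beps(\bu^h)}_{0,\Th}$ and Young-splitting the Neumann data term to produce $\Dinf^{-1}\norm{\bg}_{1/2,\Gamma_N}^2$ — a standard absorption and division by $\norm{\calD^{1/2}\beps(\bu^h)}_{0,\Th}$ then yields \eqref{eq:aux04}, with the last two terms dropping out in the LDG case. I expect the main obstacle to be the weight bookkeeping in the face estimates: making the powers of $h_F$, $k$ and the operator weight $\av{\calD}$ cancel so that each face factor reassembles \emph{either} a $\cuu^{1/2}$/$\cdd^{1/2}$ penalty quantity \emph{or} a piece of $\norm{\calD^{1/2}\beps(\bu^h)}_{0,\Th}$. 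In particular, the data estimate $\norm{\cdd^{1/2}\bg}_{0,\EhN}\lesssim\Dinf^{-1/2}\norm{\bg}_{1/2,\Gamma_N}$ is the single point where a genuine trace/interpolation bound on $\bg$ — rather than a discrete inverse estimate on polynomials — is required, and it is what forces the extra $H^{1/2}$-regularity on the traction data.
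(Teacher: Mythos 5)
Your proposal is correct and follows essentially the same route as the paper: test the constitutive equation with $\btau=\calD\beps(\bu^h)$, use identity \eqref{eq:media-mediad} on the weighted-average term, estimate each face pairing by Cauchy--Schwarz with the $\cuu^{1/2}$/$\cdd^{1/2}$ weights (exploiting $\cuu\cdd=O(1)$ together with the trace and inverse inequalities to absorb the $\calD\beps(\bu^h)$ face factors), and control $\norm{\cdd^{1/2}\bg}_{0,\EhN}$ by $\Dinf^{-1/2}\norm{\bg}_{1/2,\Gamma_N}$ via an extension/trace argument. The only cosmetic difference is that you Young-split the data term to produce the squared $\Dinf^{-1}\norm{\bg}_{1/2,\Gamma_N}^2$ explicitly, whereas the paper leaves it as a product before invoking the same bound.
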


\subsection{Proof of Proposition~\ref{stab:mixed}}
\begin{proof}[Proof of Proposition~\ref{stab:mixed}]
We mimic the proof of the stability result for the continuous problem.\\
\noindent {\bf Step 1.}  We take $\bv= {\bu}^h_t \in \V$ as test function in the first equation of \eqref{dg:3a} and use that $\salto{\widehat{\bsig}}= \bf{0}$, to obtain
\begin{equation}\label{le2:1}
(\rho {\bu}^h_{tt}, {\bu}^h_t )_{\Th} +(\bsig^h , \beps( {\bu}^h_t ))_{\Th}   -\langle \av{\widehat{\bsig}}, \salto{{\bu}^h_t}\rangle_{\Eho\, \cup \,\EhD} =(\bfe, {\bu}^h_t)_{\Th} +\langle \bg, {\bu}^h_t\rangle_{ \calF_h^N}.
\end{equation}
\noindent {\bf Step 2.} We consider the DG approximation of  the constitutive equation \eqref{eqd:2} differentiated with respect to time
\begin{multline*}
( \calA {\bsig}_t^h , \btau)_{\Th} +( \beps({\bu}^h_t), \btau)_{\Th}+\langle \av{\widehat{\bu}_{t}-\bu^h_t }, \salto{\btau}\rangle_{\Eho}  +\langle \salto{\widehat{\bu}_{t}-{\bu}^h_t}, \av{\btau}\rangle_{\Eho\, \cup \,\EhD} \\ 
  +\langle \cdd (\bsig^{h}_{t}\bn -\bg_{t}), \btau\,\n\rangle_{\EhN}  =0 \end{multline*}
for all $\btau \in \bS_h$,  
where the numerical flux $\widehat{\bu}_t$ is defined according to the definition of $\widehat{\bu}$. In particular on boundary faces we have $\widehat{\bu}_t=\bf{0}$, mimicking the boundary condition for \eqref{2dt}, $\widehat{\bu}_t={\bf 0} $ on $\Gamma_D$.  By setting  $\btau =\bsig^h$ in the above equation, and using that $\salto{\widehat{\bu}_t}={\bf 0}$ we get,
\begin{multline}\label{le2:3}
( \calA {\bsig}^h_t , \bsig^h)_{\Th} -( \beps({\bu}^h_t), \bsig^h)_{\Th}-\langle \av{\widehat{\bu}_t - {\bu^h_t}}, \salto{\bsigh\,}\rangle_{\Eho}  
+\langle \salto{  {\bu}^h_t}, \av{\bsig^h}\rangle_{\Eho\, \cup \,\EhD}   \\
+\langle \cdd \bsig^{h}_{t},\bsig^{h}\, \n \rangle_{\EhN} 
= \langle \cdd \bg_{t}, \bsig^{h}\, \n \rangle_{\EhN}. 
\end{multline}
{\bf Step 3.}  Summing  up the equations \eqref{le2:1} and \eqref{le2:3}, we have
\begin{equation}\label{le2:4}
(\rho {\bu}^h_{tt}, \bu^h_t )_{\Th}+( \calA \bsig^h_t , \bsig^h)_{\Th}   +\mathcal{Q} 
=(\bfe, \bu^h_t)_{\Th} +\langle \bg, {\bu}^h_{t}\rangle_{\calF_h^N}+\langle \cdd \bg_{t}, \bsig^{h}\, \n \rangle_{\EhN}, 
\end{equation}
where $\mathcal{Q}$ is defined by
\begin{equation}\label{defQ:1}
\mathcal{Q} =-\langle \av{\widehat{\bsig}}, \salto{{\bu}^h_t}\rangle_{\Eho\, \cup \,\EhD}  + \langle \salto{\bu^h_t}, \av{\bsig^h}\rangle_{\Eho\, \cup \,\EhD}  -\langle \av{\widehat{\bu}_{t}-{\bu^h_t}}, \salto{\bsig^h\,}\rangle_{\Eho}  + \langle \cdd \bsig^{h}_{t},\bsig^{h}\rangle_{\EhN}.
\end{equation}
Equation \eqref{le2:4} is then equivalent to
\begin{equation}\label{le2:5}
\frac{1}{2} \frac{d}{dt} \left(\|\rho^{1/2} {\bu}_t^h\|^{2}_{0,\Th}+\| \calA^{1/2} \bsig^h\|^{2}_{0,\Th}  \right)+\mathcal{Q} =(\bfe, {\bu}_t^h)_{\Th} +\langle \bg, {\bu}_t^h\rangle_{ \calF_h^N}+\langle \cdd \bg_{t}, \bsig^{h}\, \n \rangle_{\EhN}.
\end{equation}
We first study the case \emph{(i)}, i.e., $\bff= \bg= \bf{0}$.  Then, we claim, that to guarantee stability of the method it is enough to show that $\mathcal{Q}$ is either non-negative or it can be rewritten as the time derivative of a non-negative quantity.
Substituting in \eqref{defQ:1} the definition of the fluxes \eqref{flux:dg} for the FDG methods, $\mathcal{Q}$ becomes  
\begin{multline*}
\mathcal{Q}^{FDG}
=-\langle  \av{\bsig^{h}}_{\delta} - \av{\bsig^{h}}, \salto{{\bu}^h_t}\rangle_{\Eho\, \cup \,\EhD}  
+  \langle \cuu \salto{\bu^{h}}, \salto{{\bu}^h_t}\rangle_{\Eho\cup \EhD}   \\
+\langle \av{{\bu^h_t}} - \av{{\bu^h_t}}_{1-\delta}, \salto{\bsig^h\,}\rangle_{\Eho}
+\langle \cdd\salto{\bsig_t^h\,}, \salto{\bsig^h\,}\rangle_{\Eho}
+ \langle \cdd \bsig^{h}_{t},\bsig^{h}\rangle_{\EhN}.
\end{multline*}
Thanks to the definition of the average operator on boundary edges/faces and the identity \eqref{eq:media-mediad}, we have
\begin{equation}\label{relation00}
\langle  \av{\bsig^{h}}_{\delta}-\av{\bsig^{h}}, \salto{{\bu}^h_t}\rangle_{\Eho\, \cup \,\EhD} =
\langle  \av{{\bu^h_t}}- \av{{\bu^h_t}}_{1-\delta},\salto{\bsig^h}\rangle_{\Eho},
\end{equation}
and therefore
\begin{equation}\label{ecco:1}
\mathcal{Q}^{FDG}=\frac{1}{2}\frac{d}{dt} \left( \|\cuu^{1/2}  \salto{{\bu}^h}\|_{0,\Eho\cup \EhD}^{2} + \|\cdd^{1/2}  \salto{\bsig^h}\|_{0,\Eho\cup \EhN}^{2}\right).
\end{equation}
For the LDG ($\cdd={\bf 0}$) and the ALT ($\cuu=\cdd={\bf 0}$) methods the above expression reduces to
\begin{align*}
& \mathcal{Q}^{LDG}=\frac{1}{2}\frac{d}{dt}  \|\cuu^{1/2}  \salto{{\bu}^h}\|_{0,\Eho\cup \EhD}^{2},
&& \mathcal{Q}^{ALT}=0.
\end{align*}
Therefore, for all the considered methods, the corresponding discrete energy as defined in \eqref{eq:normMDG} is preserved in time, which proves part \emph{(i)} of the thesis.\\

Next we deal with the cases $\bff\ne {\bf 0}$, $\Gamma_D=\partial \Omega$ and $\bff,  \bg \ne {\bf 0} $. 
For the former case, by using estimate \eqref{aux:00} from Lemma~\ref{le:aux00}, we  find
\begin{equation*}
\normMDG{(\bu^h, \bsig^h)}^{2}  
 \lesssim  \normMDG{( \bu^{h}_{0}, \bsig^{h}_{0} )}^{2} 
+  2t \, \int_0^t  \rinf^{-1/2} \| \bff \|_{0,\O} \| \rho^{1/2} \bu^h_\tau \|_{0,\Th}d\tau,
\end{equation*}
which together with the definition \eqref{eq:normMDG}, $t\leq T$ and a standard application of Gronwall's Lemma~(cf. Lemma~ \ref{Gronwall}) gives the result and proves part \emph{(ii)}.

We finally show  part \emph{(iii)}.
We restrict ourselves to the FDG formulation;
the corresponding estimate for the LDG can be obtained by setting $\cdd={\bf 0}$. Substituting \eqref{ecco:1} into \eqref{le2:5} gives
\begin{multline}\label{per-sust}
\frac{1}{2} \frac{d}{dt} \left(\|\rho^{1/2} \bu_t^h\|^{2}_{0,\Th}+\| \calA^{1/2} \bsig^h\|^{2}_{0,\Th}+ \|\cuu^{1/2}\salto{\bu^h}\|^{2}_{0,\Eho\cup\EhD} +\|\cdd^{1/2}\salto{\bsig^h}\|^{2}_{0,\Eho\cup\EhN}\right)\\
=(\bfe, \bu_t^h)_{\Th} +\langle \bg, \bu_t^h\rangle_{ \EhN}+\langle \cdd \bg_{t}, \bsig^{h}\, \n \rangle_{\EhN}.
\end{multline}
Recalling now the definition of the $\normMDG{\cdot}$-norm \eqref{eq:normMDG}, and integrating in time we get 
\begin{multline*}\label{magari00}
\frac{1}{2}\normMDG{(\bu^h, \bsig^h)(t)}^{2} 
\leq
\frac{1}{2}\normMDG{(\bu^{h}_{0}, \bsig^{h}_{0})}^{2} 
+ \underbrace{\left|\int_0^t  ( \bfe ,  \bu^{h}_\tau )_{\Th} \, d\tau \right|}_{\text{I}}
+ \underbrace{\left| \int_0^t \langle \bg , \bu^{h}_{\tau} \rangle_{\EhN}\,d\tau\right|}_{\text{II}}\\
+\underbrace{\left|\int_0^t  \langle \cdd \bg_\tau, \bsig^{h}\,\n\rangle_{\EhN} \, d\tau \right|}_{\text{III}}\;,
\end{multline*}
where the last term is only present for the FDG method.  
The terms I and III are readily estimated by using Lemma~\ref{le:aux00} 
\begin{equation*}
\begin{aligned}
&\text{I} \leq t \int_0^t  \rinf^{-1/2} \| \bfe \|_{0,\O}\|\rho^{1/2} \bu^{h}_\tau \|_{0,\Th}  \, d\tau,  &&\text{III}\lesssim t \int_0^t  \Dinf^{-1/2} \|\bg_{\tau}\|_{1/2, \Gamma_N}\|\cdd^{1/2} \bsig^{h}\n\|_{0,\EhN} \,d\tau\;. 
\end{aligned}
\end{equation*}
To estimate the term II,  from Lemma~\ref{le:aux00}  we first have
\begin{equation*}
\begin{aligned}
\text{II}&\lesssim
\epsilon \normA{\bu^{h}}^2 +  \normA{\bu^{h}_0}\norm{\bg_{0}}_{1,\Gamma_N}
+\frac{ \Dinf^{-1}}{\epsilon}
\norm{\bg}_{1,\Gamma_N}^2 
+t \int_0^t \Dinf^{-1/2} \norm{\bg_{\tau}}_{1,\Gamma_N}\normA{\bu^{h}} \,d\tau,
\end{aligned}
\end{equation*}
with $\epsilon>0$ (to be specified later on). Now, to bound  $\normA{\bu^{h}} $ in terms of the $\normMDG{(\bu^h,\bsig^h)}$ norm,
we use estimate  
\eqref{eq:aux04} from Lemma~\ref{le:aux04},  to get
\begin{equation*}
\normA{\bu^{h}}^{2} 
\leq C_{\text{II}}(
\normMDG{(\bu^h,\bsig^h)}^{2}
+ \Dinf^{-1} \|\bg\|^2_{1/2, \Gamma_N} ),
\end{equation*}
and so the estimate for II becomes,
\begin{align*}
\text{II}&\lesssim
\epsilon  C_{\text{II}}
\normMDG{(\bu^h,\bsig^h)}^{2} + \normA{\bu^{h}_0}\norm{\bg_{0}}_{1,\Gamma_N}  +\frac{ \Dinf^{-1}}{\epsilon}
\norm{\bg}_{1,\Gamma_N}^2 +  \epsilon C_{\text{II}}\Dinf^{-1} \|\bg\|^2_{1/2, \Gamma_N}\\
&+
t \int_0^t \Dinf^{-1/2} \norm{\bg_\tau}_{1,\Gamma_N}
\normMDG{(\bu^h,\bsig^h)}\,d\tau
+t \int_0^t \Dinf^{-1}  \norm{\bg_{\tau}}_{1,\Gamma_N} 
\|\bg\|_{1/2, \Gamma_N} \,d\tau
\end{align*}
Substituting all the above estimates, recalling the definition of the $\normMDG{\cdot}$ norm, using standard Sobolev' imbeddings  and taking $\epsilon$ so that $1/2-C_{\text{II}}\epsilon$ is positive,  gives
\begin{multline*}
\normMDG{(\bu^h, \bsig^h)}^{2} 
\lesssim \normMDG{(\bu^{h}_{0}, \bsig^{h}_{0})}^{2} + 
\Dinf^{-1/2}\normA{\bu^{h}_0}\norm{\bg_{0}}_{1,\Gamma_N}  + \Dinf^{-1}
\norm{\bg}_{1,\Gamma_N}^2 
\\
 +t \int_0^t \Dinf^{-1}  \norm{\bg_{\tau}}_{1,\Gamma_N} 
\|\bg\|_{1/2, \Gamma_N} \,d\tau +t \int_0^t \left( 
\rinf^{-1/2} \| \bfe \|_{0,\O}
+  \Dinf^{-1/2} \|\bg_{\tau}\|_{1,\Gamma_N} 
\right)\normMDG{(\bu^{h}, \bsig^h)}  \, d\tau.
\end{multline*}
Finally, using $t\leq T$ and a standard application of Gronwall's Lemma~gives the estimate in part \emph{(iii)} and concludes the proof.
 \end{proof}
\subsection{Proof of Proposition~\ref{stab:prim}}
We first observe that, for any $F\in \Eho\, \cup \, \EhD$, and any $\bw, \bv \in \V$, the Cauchy-Schwarz, Agmon \eqref{agmon} and inverse \eqref{eq:inverse} inequalities give 
\begin{equation}\label{coso:5}
\begin{aligned}
\left| \langle  \av{\calD \beps(\bw)}_{\delta}, \salto{\bv} \rangle_{F} \right| 
&\lesssim \frac{1}{\czz} 
\|\calD^{1/2} \beps(\bw)\|_{0,\K}\|\SF^{1/2}   \salto{\bv}\|_{0,F}^{2}\leq 
\frac{1}{\czz} 
\normIP{\bw}\normIP{\bv}
\end{aligned}
\end{equation} 
where $\czz$ is the positive parameter appearing in the definition of the penalty function \eqref{def:penalty_SIP}.
\begin{proof}[Proof of Proposition~\ref{stab:prim}]
The proof follows the same ideas of the proof of stability in the continuous case, but now we work directly with the displacement variational formulation.\\
{\bf Step 1.}  We  set $\bv= {\bu}^h_t \in \V$ and $\theta=-1$  (SIPG($\delta$)-method) in \eqref{dg:ip3} to get
\begin{equation}\label{eq:stab_SIPG}
\frac{1}{2} \frac{d}{dt} \Big(  
 \normIP{\bu^{h}}^{2} 
\left. 
-2 \langle \av{\calD \beps(\bu^h)}_{\delta}, \salto{{\bu}^h}\rangle_{\Eho\, \cup \, \EhD} \right) 
=
(\bfe, {\bu}^h_t)_{\Th} +\langle \bg, {\bu}^h_t \rangle_{\calF_h^N}.
\end{equation}
\noindent {\bf Step 2.} 
Integrating in time the above equation we obtain 
\begin{multline}\label{eq:wsip}
\normIP{\bu^{h}}^{2} 
-  2  \langle \av{\calD \beps(\bu^{h})}_{\delta}, \salto{{\bu}^h}\rangle_{\Eho\, \cup \, \EhD}
=
\normIP{\bu^{h}_{0}}^{2}\\
-2  \langle \av{\calD \beps(\bu^{h}_{0})}_{\delta}, \salto{{\bu}^h_0}\rangle_{\Eho\, \cup \, \EhD}
+ 2 \int_{0}^{t}  (\bfe, {\bu}^h_\tau)_{\Th} \, d\tau 
+ 2 \int_{0}^{t} \langle \bg, {\bu}^h_\tau \rangle_{\calF_h^N} 	\, d\tau.
\end{multline}
To guarantee stability we first need to show that the sum of the terms on the right hand side is an upper bound for $\normIP{\bu^{h}}^{2}$.  Using \eqref{coso:5}, the arithmetic-geometric inequality and choosing the penalty parameter $\czz$  sufficiently large, we obtain
\begin{align*}
\normIP{\bu^{h}}^{2} -2 \langle \av{\calD \beps(\bu^{h})}_{\delta}, \salto{{\bu}^h}\rangle_{\Eho\, \cup \, \EhD}  
\gtrsim \normIP{\bu^{h}}^{2}.
 \end{align*}
Using \eqref{coso:5} we also obtain
\begin{equation*}
 \normIP{\bu^{h}_{0}}^{2} 
 -2 \langle \av{\calD \beps(\bu^{h}_{0})}_{\delta}, \salto{{\bu}^h(0)}\rangle_{\Eho\, \cup \, \EhD} 
\lesssim  \normIP{\bu^{h}_{0}}^{2}.
 \end{equation*}
Substitution now of these two estimates into \eqref{eq:wsip}, gives
\begin{equation*}
 \normIP{\bu^{h}}^{2} 
\lesssim
\normIP{\bu^{h}_{0}}^{2} 
+   \int_{0}^{t}  (\bfe, {\bu}^h_\tau)_{\Th} \, d\tau 
+  \int_{0}^{t} \langle \bg, {\bu}^h_\tau \rangle_{\calF_h^N} 	\, d\tau.
\end{equation*}
Then, in case of no external forces, i.e., $\bff= \bg={\bf 0}$ (the last two integrals above would vanish) the above estimate yield to part \emph{(i)} of the thesis.
As regards part {\it (ii)},  Lemma~\ref{le:aux00}  and the inequality  
$\normA{\bu^{h}} \leq \normIP{\bu^{h}}$, give for $\epsilon>0$
\begin{multline*}
\normIP{\bu^{h}}^{2} \lesssim \epsilon  \normIP{\bu^{h}}^{2} + 
\normIP{\bu^{h}_{0}}^{2} +\normA{\bu^{h}_0}\norm{\bg_{0}}_{1,\Gamma_N} +\frac{\Dinf^{-1}}{\epsilon}\|\bg\|_{1, \Gamma_N}^{2}\\
+ t \int_0^t  \left( \rinf^{-1} \| \bff \|_{0,\O} + \Dinf^{-1/2} \|\bg_{\tau}\|_{1, \Gamma_N}\right)
\normIP{\bu^{h}} \,d\tau.
 \end{multline*}
Therefore  choosing $\epsilon$ small enough and $t\leq T$ we obtain
\begin{multline*}
\normIP{\bu^{h}}^{2} \lesssim 
\normIP{\bu^{h}_{0}}^{2} +\normA{\bu^{h}_0}\norm{\bg_{0}}_{1,\Gamma_N} 
+\Dinf^{-1} \|\bg\|_{1, \Gamma_N}^{2} \\
+T\int_0^t  \left( \rinf^{-1} \| \bff \|_{0,\O} +  \Dinf^{-1/2}\|\bg_{\tau}\|_{1, \Gamma_N}\right) \normIP{\bu^{h}} \, d\tau,
\end{multline*}
and the proof is complete after a standard application of Gronwall's Lemma~\ref{Gronwall}.
\end{proof}
\section{Error Analysis}
\label{sec:error_estimates}
In this section we derive  \emph{a priori} error estimates for the DG methods introduced in  Section~\ref{DG-methods}. To this aim, we introduce the following augment norms
\begin{equation*}
\begin{aligned}
\normtreMDG{(\bv,\bsig)}^2 &=\normMDG{(\bv,\bsig)}^2 
+
\norm{\cdd^{1/2} \av{\bsig}_{\delta}}_{0,\Eho\cup \EhD}^{2}
&& \forall\, (\bv,\bsig) \in {\bf H}^{1}(\Th)\times \vect{\mathcal{H}}^{1}(\Th),\\
\normtre{ \bv}_{\calE,\textrm{IP}}^2&= \norm{\bv}_{\calE,\textrm{IP}}^{2} + 
 \norm{h_F^{1/2}\av{\calD \beps(\bv)}_\delta}^2_{0,\EhoD}
&& \forall \,\bv \in {\bf H}^2(\Th), 
\end{aligned}
\end{equation*}
where $\normMDG{(\cdot,\cdot)}$ and $\norm{\cdot}_{\calE,\textrm{IP}}$ 
are defined in \eqref{eq:normMDG} and \eqref{norm:ip}, respectively. \\

For any $\bu \in {\bf H}^{k+1}(K)$ (resp. $\bsig \in \vect{\mathcal{H}}^{k+1}(K)$), $k\geq 0$, let $\bu^I$ (resp. $\bsig^I$) be the Lagrange nodal interpolants of $\bu$ (resp. $\bsig$). Then, using standard interpolation estimates, it holds
\begin{subequations}
\begin{align}
\normtreMDG{(\bu-\bu^I, \bsig-\bsig^I)}&\lesssim
h^{k}\left( |\bu |_{k+1,\O}^2 + h^2|\bsig |_{k+1,\O}^2  + h^2| \bu_t|_{k+1,\O}^2\right)^{1/2},
\label{eq:errMDGinterp}\\
\normtre{\bu-\bu^I}_{\calE,\textrm{IP}}
&\lesssim
h^{k}\left( |\bu |_{k+1,\O}^2 + h^2| \bu_t|_{k+1,\O}^2\right)^{1/2},
\label{eq:errIPinterp}\\
\|\calD^{1/2}\beps(\bu-\bu^I)\|_{0,\Th}
&\lesssim
h^{k} |\bu |_{k+1,\O},
\label{eq:errMDGinterp2}
\end{align}
\end{subequations}
where the hidden constants depend on the polynomial degree $k$, $\Dinf$, $\Dsup$, $\rsup$, and the shape regularity constant of the mesh $\Th$.
 \subsection{DG methods for the  displacement-stress formulations}
We now present the error analysis of the  displacement-stress DG methods. To derive the error equations, we first need the variational formulation for the continuous solution $(\bu,\bsig)$ of \eqref{eqd:1}--\eqref{eqd:6} allowing for test functions in $(\V,\bS_h)$, that reads as follows: for all $t \in (0,T]$ find $(\bu,\bsig) \in {\bf H}^{1}_{0,\Gamma_D}(\O)\times \vect{\mathcal L}^{2}(\O)$ such that
\begin{subequations}
\begin{align}
&(\rho {\bu}_{tt}, \bv )_{\Th} +(\bsig, \beps(\bv))_{\Th}   -\langle \av{\bsig}_\delta, \salto{\bv}\rangle_{\Eho\, \cup \,\EhD} =(\bfe, {\bv})_{\Th} +\langle \bg, {\bv} \rangle_{ \calF_h^N}, \quad&& \forall\, \bv \in \V\;, \label{misti_equ}\\
&( \calA \bsig_ , \btau)_{\Th} -( \beps(\bu), \btau)_{\Th}-\langle  \av{\bu}_{(1-\delta)}, \salto{\btau}\rangle_{\Eho} + \langle \av{\bu}, \salto{\btau}\rangle_{\Eho} =0,  \quad&& \forall\, \btau \in \bS_h,
\label{misti_eqs}
\end{align}
\end{subequations}
where to obtain last equation we have used that  $\salto{\bu} ={\bf 0}$ on $\Eho$ , so that
\begin{equation*}
0=\frac{2\delta-1}{2}\langle  \btau^{+} - \btau^{-} ,\salto{\bu} \rangle_{\Eho} = \frac{2\delta-1}{2}\langle  \bu^{+} - \bu^{-}, \salto{\btau} \rangle_{\Eho} =
-\langle  \av{\bu}_{(1-\delta)}, \salto{\btau}\rangle_{\Eho} + \langle \av{\bu}, \salto{\btau}\rangle_{\Eho}\;.
\end{equation*}
We define the error:
\begin{equation*}
(\eu,\es)=(\bu-\bu^{h},\bsig-\bsig^{h})\;.
\end{equation*}
Subtracting from \eqref{misti_equ}-\eqref{misti_eqs} the corresponding equations of the DG formulation in \eqref{dg:mixform}, and using that $\salto{\bsig} = \salto{\bu} ={\bf 0}$ on $\Eho$ together with the boundary conditions for the continuous solution \eqref{eqd:3}-\eqref{eqd:4} ($\bu = {\bf 0}$ on $\EhD$ and $\bsig\n=\bg$ on $\EhN$), we obtain the error equations:
\begin{subequations}
\begin{align}
&(\rho (\eut), \bv )_{\Th} 
+(\es , \beps( \bv ))_{\Th}   
-\langle \av{\es}_{\delta}, \salto{\bv}\rangle_{\Eho\cup \EhD} 
+\langle \cuu \salto{\eu}, \salto{\bv}\rangle_{\Eho\cup \EhD}=0,  &&\label{err:00}\\
&( \calA (\es) , \btau)_{\Th} -( \beps(\eu), \btau)_{\Th}+\langle \cdd\salto{\es}, \salto{\btau}\rangle_{\Eho\cup \EhN}  + \langle \salto{\eu}, \av{\btau}_{\delta}\rangle_{\Eho\cup \EhD} = 0, &&\label{err:01} 
\end{align}
\end{subequations}
where in last equation we have used  \eqref{relation00}.\\

The error analysis will mimic the stability proof, therefore we will also need to consider the variational formulation of the time derivative of equation \eqref{eqd:2} allowing $\btau\in \bS_h$.
B using the identity \eqref{eq:media-mediad}, we obtain
\begin{equation}\label{eq:errorsigmat}
\begin{aligned}
 ( \calA \bsig_t , \btau)_{\Th} -( \beps(\bu_t), \btau)_{\Th} 
 + \langle \cdd \salto{\bsig_t}, \salto{\btau}\rangle_{\Eho\cup \EhN} + \langle \salto{\bu_t}, \av{\btau}_{\delta} \rangle_{\Eho\cup \EhD} = \langle \cdd {\bf g}_t, \btau \bn \rangle_{\EhN}
\end{aligned}
\end{equation}
for all $\btau \in \bS_h$.
We now state the first main result of this section.
\begin{theorem}\label{Teo:ErrLDG}
Let $(\bu,\bsig)$ be the solution of \eqref{eqd:1}--\eqref{eqd:6}, and let $(\bu^h, \bsig^h) \in \bV_h\times \bS_h$ be the solution of any of the DG method in  displacement-stress formulations  defined in Section~\ref{sec:mixedDG}. 
Then,
\begin{multline}\label{eq:errorLDG}
\sup_{0<t \leq T}\normtreMDG{(\bu(t)-\bu^h(t), \bsig(t)-\bsig^h(t))}  \lesssim \\
h^{k}\sup_{0<t \leq T} \left( |\bu(t) |_{k+1,\O}^2 + h^2|\bsig(t) |_{k+1,\O}^2  + h^2| \bu_{\tau}(t)|_{k+1,\O}\right)^{1/2}\\
\qquad \qquad +h^{k}\, \int_0^T \left( |\bu(\tau) |_{k+1,\O}^2 + h^2|\bsig(\tau) |_{k+1,\O}^2  + h^2| \bu_{\tau}(\tau)|_{k+1,\O} \right)^{1/2} \, d\tau 
 \\
+h^{k}\, \int_0^T \left( |\bu_{\tau}(\tau) |_{k+1,\O}^2 + h^2|\bsig_{\tau}(\tau) |_{k+1,\O}^2  + h^2| \bu_{\tau\tau}(\tau)|_{k+1,\O} \right)^{1/2} \, d\tau.
\end{multline}  
where the hidden constant depends on $\Dinf$, $\Dsup$, $\rho^*$, the polynomial degree $k$,  the shape regularity constant of the mesh $\Th$ and has linear dependence on the observation time $T$.
\end{theorem}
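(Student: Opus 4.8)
The plan is to run the by-now standard energy argument for semidiscrete error analysis, replaying the proof of Proposition~\ref{stab:mixed} on a \emph{discrete} error rather than on the solution itself. First I would split the error through the Lagrange interpolants $\bu^I,\bsig^I$ introduced before the statement, writing
\[
\eu = (\bu-\bu^I)+(\bu^I-\bu^h)=:\beeta_u+\bxi_u, \qquad \es = (\bsig-\bsig^I)+(\bsig^I-\bsig^h)=:\beeta_\sigma+\bxi_\sigma,
\]
with $(\bxi_u,\bxi_\sigma)\in\V\times\bS_h$. By the triangle inequality it then suffices to control $\normtreMDG{(\bxi_u,\bxi_\sigma)}$, since $\normtreMDG{(\beeta_u,\beeta_\sigma)}$ is bounded directly by the interpolation estimate \eqref{eq:errMDGinterp}, which already produces the supremum term of \eqref{eq:errorLDG}. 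Moreover, because $\bxi_\sigma$ is a discrete tensor, the extra triple-norm contribution $\norm{\cdd^{1/2}\av{\bxi_\sigma}_\delta}_{0,\Eho\cup\EhD}$ can be absorbed into $\normMDG{(\bxi_u,\bxi_\sigma)}$ via the trace inequality \eqref{trace:0} and the scaling of $\cdd$ in \eqref{las-ces}, so the whole task reduces to a bound for $\normMDG{(\bxi_u,\bxi_\sigma)}$ in the natural energy norm.

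Next I would insert this splitting into the error equations \eqref{err:00}--\eqref{err:01} and into the time-differentiated error equation obtained by subtracting the time derivative of \eqref{dg:mixform} from \eqref{eq:errorsigmat}; here both the volume forcing and, crucially, the traction data $\bg$ cancel, leaving a system in which $(\bxi_u,\bxi_\sigma)$ is the unknown and the interpolation errors $(\beeta_u,\beeta_\sigma)$ appear as source terms. I would then mimic Step~1--Step~3 of the stability proof: test the perturbed momentum equation with $\bxi_{u,t}$, test the perturbed time-differentiated constitutive equation with $\bxi_\sigma$, and add. Exactly as in \eqref{le2:4}, the discrete--discrete coupling $(\bxi_\sigma,\beps(\bxi_{u,t}))_{\Th}$ cancels, the consistency term collapses to the time derivative of the jump seminorms through \eqref{eq:media-mediad}--\eqref{relation00}, and one arrives at
\[
\tfrac12\tfrac{d}{dt}\normMDG{(\bxi_u,\bxi_\sigma)}^2 = \mathcal R(\beeta_u,\beeta_\sigma;\bxi_u,\bxi_\sigma),
\]
where $\mathcal R$ gathers the remaining interpolation-error pairings, such as $(\rho\,\beeta_{u,tt},\bxi_{u,t})_{\Th}$, $(\calA\beeta_{\sigma,t},\bxi_\sigma)_{\Th}$, $(\beeta_\sigma,\beps(\bxi_{u,t}))_{\Th}$, and the jump terms weighted by $\cuu,\cdd$.

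I would then integrate in time and estimate $\mathcal R$ term by term with Cauchy--Schwarz, the trace/inverse inequalities \eqref{trace:0}--\eqref{eq:inverse}, and the interpolation bounds \eqref{eq:errMDGinterp}--\eqref{eq:errMDGinterp2}. The contributions carrying a time derivative on $\bxi$, principally $\int_0^t(\beeta_\sigma,\beps(\bxi_{u,\tau}))_{\Th}\,d\tau$ and the flux pairings against $\salto{\bxi_{u,\tau}}$, I would treat with the integration-by-parts-in-time formula \eqref{eq:ibp_formula}, yielding endpoint contributions at $t$ and $0$ (which feed the supremum term) together with remainders in which the time derivative has been shifted onto $\beeta$. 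The direct terms $\beeta_{u,tt}$, $\beeta_{\sigma,t}$ and the shifted remainders are precisely what generate the $\bu_{\tau\tau}$, $\bsig_\tau$, $\bu_\tau$ regularity of the third integral in \eqref{eq:errorLDG}, while the non-differentiated pairings supply the second integral; the initial-data contribution $\normMDG{(\bxi_u(0),\bxi_\sigma(0))}$ either vanishes or is interpolation-bounded once the discrete initial data are taken as interpolants of $\bu_0,\bu_1,\bsig_0$. After absorbing the $\epsilon\normMDG{(\bxi_u,\bxi_\sigma)}^2$ terms into the left-hand side, a final application of Gronwall's Lemma~\ref{Gronwall} gives the claim.

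The main obstacle is the control of the discrete strain $\beps(\bxi_u)$: it is \emph{not} part of $\normMDG{\cdot}$, yet it is produced by the endpoint and remainder terms of the time integration by parts applied to $(\beeta_\sigma,\beps(\bxi_{u,\tau}))_{\Th}$. To close the estimate I would derive, directly from the constitutive error equation \eqref{err:01}, an analogue of Lemma~\ref{le:aux04} bounding $\norm{\calD^{1/2}\beps(\bxi_u)}_{0,\Th}$ by $\norm{\calA^{1/2}\bxi_\sigma}_{0,\Th}$, the jump seminorms of $\bxi_u,\bxi_\sigma$, and interpolation errors; this is exactly the point at which the structural cancellation of the mixed formulation is indispensable. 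Making the weights $\cuu,\cdd$ from \eqref{las-ces} deliver the correct $h$-powers in this auxiliary bound, and thereby matching the interpolation exponents in \eqref{eq:errorLDG}, is the part that requires the most care.
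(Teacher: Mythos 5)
Your proposal is correct and follows essentially the same route as the paper's proof: the same interpolant-based splitting, testing the momentum error equation with the time derivative of the discrete error and the time-differentiated constitutive error equation with the discrete stress error, integration by parts in time to shift derivatives onto the interpolation error, the auxiliary bound on $\norm{\calD^{1/2}\beps(\cdot)}_{0,\Th}$ obtained from \eqref{err:01} with $\btau=\calD\beps(\bomegah)$ exactly as in Lemma~\ref{le:aux04}, and a closing application of Gronwall's lemma. The points you flag as delicate (absorbing the extra triple-norm term for discrete functions, and controlling the discrete strain outside the energy norm) are precisely the ones the paper handles, and in the same way.
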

\begin{proof} 
The proof follows the proof of the stability result given in Proposition~\ref{stab:mixed}. 
We start by considering the splitting  $\eu = \bomegaI - \bomegah$
and $\es  = \bcsi^I - \bcsi^h$ with
\begin{equation}\label{splito}
\begin{aligned}
\bomegaI&= \bu -\bu^I,
&&\bomegah = \bu^h - \bu^I,
&& \bcsi^I= \bsig -\bsig^I,
&& \bcsi^h = \bsig^h - \bsig^I.
\end{aligned}
\end{equation}
We set $\bv = \bomegah_t \in \V$ in the error equation  \eqref{err:00} obtaining
\begin{equation}\label{err:1:1}
(\rho (\eut), \bomegah_t )_{\Th} 
+(\es , \beps( \bomegah_t ))_{\Th}   
-\langle \av{\es}_{\delta}, \salto{\bomegah_t}\rangle_{\Eho\cup \EhD} 
+\langle \cuu \salto{\eu}, \salto{\bomegah_t}\rangle_{\Eho\cup \EhD}=0, 
\end{equation}

For the second error equation, we 
reason as in the proof of Proposition~\ref{stab:mixed} (step II) and subtract from \eqref{eq:errorsigmat} its corresponding discrete approximation and take $\btau=\bcsi^h$
\begin{equation}\label{err:1:2}
\begin{aligned}
& ( \calA (\est) , \bcsi^h)_{\Th} -( \beps(\eup), \bcsi^h)_{\Th}+\langle \cdd\salto{\est}, \salto{\bcsi^h}\rangle_{\Eho\cup \EhN}  + \langle \salto{\eup}, \av{\bcsi^h}_{\delta}\rangle_{\Eho\cup \EhD}  =0\;.
 \end{aligned}
\end{equation}
Summing up equation \eqref{err:1:1} and \eqref{err:1:2} and using the splitting \eqref{splito} we  obtain
 \begin{multline*}
 (\rho (\bomegah_{tt}), \bomegah_t )_{\Th} + ( \calA (\bcsi^h_t) , \bcsi^h)_{\Th} +\langle \cuu \salto{\bomegah}, \salto{\bomegah_t}\rangle_{\Eho\cup \EhD} \\
+\langle \cdd\salto{\bcsi^h_t}, \salto{\bcsi^h}\rangle_{\Eho\cup \EhN} 
=
(\rho (\bomegaI_{tt}), \bomegah_t )_{\Th} + ( \calA (\bcsi^I_t) , \bcsi^h)_{\Th} \\
+\langle \cdd\salto{\bcsi^I_t}, \salto{\bcsi^h}\rangle_{\Eho\cup \EhN}
 + \langle \salto{\bomegaI_{t}}, \av{\bcsi^h}_{\delta}\rangle_{\Eho\cup \EhD}  -( \beps(\bomegaI_{t}), \bcsi^h)_{\Th} 
\\
+(\bcsi^I , \beps( \bomegah_t ))_{\Th}   
-\langle \av{\bcsi^I}_{\delta}, \salto{\bomegah_t}\rangle_{\Eho\cup \EhD} 
+\langle \cuu \salto{\bomegaI}, \salto{\bomegah_t}\rangle_{\Eho\cup \EhD}
 \end{multline*}
Notice that  the terms on the left hand side are just the time derivative of the $\normMDG{(\bomegah, \bcsi^h)}^{2}$, and therefore
 \begin{multline*}
 \frac{1}{2} \frac{d}{dt}   \normMDG{(\bomegah,\bcsi^h)}^{2}=
(\rho (\bomegaI_{tt}), \bomegah_t )_{\Th} 
+ ( \calA (\bcsi^I_t) , \bcsi^h)_{\Th}
+\langle \cdd\salto{\bcsi^I_t}, \salto{\bcsi^h}\rangle_{\Eho\cup \EhN} -( \beps(\bomegaI_{t}), \bcsi^h)_{\Th}\\
+\langle \cuu \salto{\bomegaI}, \salto{\bomegah_t}\rangle_{\Eho\cup \EhD}
-\langle \av{\bcsi^I}_{\delta}, \salto{\bomegah_t}\rangle_{\Eho\cup \EhD} + \langle \salto{\bomegaI_{t}}, \av{\bcsi^h}_{\delta}\rangle_{\Eho\cup \EhD} 
   +(\bcsi^I , \beps( \bomegah_t ))_{\Th}.
 \end{multline*}
Integration in time together with $\bomegah(0) = \bcsi^h(0) = {\bf 0}$, and the norm equivalence in $\V\times \bS_h$ 
\begin{equation*}
\normtreMDG{(\bomegah,\bcsi^h)}^2
\lesssim
\normMDG{(\bomegah,\bcsi^h)}^2  \lesssim |T_1| + |T_2| + |T_3|  + |T_4| + |T_5|,
\end{equation*}
where
\begin{eqnarray*}
T_1  & = & \int_0^t \left\{  (\rho (\bomegaI_{\tau\tau}), \bomegah_{\tau} )_{\Th} + ( \calA (\bcsi^I_{\tau}) , \bcsi^h)_{\Th} +\langle \cdd\salto{\bcsi^I_{\tau}}, \salto{\bcsi^h}\rangle_{\Eho\cup \EhN} -( \beps(\bomegaI_{\tau}), \bcsi^h)_{\Th}\right\}\,d\tau,\\
T_2 & = & \int_0^t \langle \cuu \salto{\bomegaI}\,d\tau, \salto{\bomegah_{\tau}}\rangle_{\Eho \cup \EhD}\,d\tau  =  \langle \cuu \salto{\bomegaI}, \salto{\bomegah}\rangle_{\Eho \cup \EhD} - \int_0^t \langle \cuu \salto{\bomegaI_{\tau}}, \salto{\bomegah}\rangle_{\Eho \cup \EhD}\,d\tau, \\
 T_3 & = & - \int_0^t\langle \av{\bcsi^I}_{\delta}, \salto{\bomegah_{\tau}}\rangle_{\Eho \cup \EhD} \,d\tau=  
-\langle \av{\bcsi^I}_{\delta}, \salto{\bomegah}\rangle_{\Eho \cup \EhD} + \int_0^t\langle \av{\bcsi^I_{\tau}}_{\delta}, \salto{\bomegah}\rangle_{\Eho \cup \EhD}\,d\tau, \\
 T_4 & = & - \int_0^t \langle \salto{\bomegaI_{\tau}}, \av{\bcsi^h}_\delta\rangle_{\Eho\cup \EhD}\,d\tau, \\
 T_5 & = &  \int_0^t (\bcsi^I , \beps( \bomegah_{\tau} ))_{\Th}\,d\tau =  (\bcsi^I , \beps( \bomegah ))_{\Th}   - \int_0^t (\bcsi^I_{\tau} , \beps( \bomegah ))_{\Th}\,d\tau, 
\end{eqnarray*}
where for $T_2$ ,$T_3$ and $T_5$ we have also employed the  integration by parts formula \eqref{eq:ibp_formula}.
To estimate $T_1$ we use Jensen and Cauchy-Schwarz  inequalities together with the definition of the norm $\normMDG{\cdot}$
\begin{equation*}
|T_1| \lesssim
t\int_0^t \left(\normtreMDG{(\bomegaI_{\tau},\bcsi^I_{\tau})} +\|\mathcal{D}^{1/2}\beps(\bomegaI_{\tau})\|_{0,\Th} \right) \normtreMDG{(\bomegah,\bcsi^h)} \,d\tau.
\end{equation*}
The estimate of $T_4$ follows easily from Jensen and Cauchy-Schwarz inequalities, the definition of $\cuu$ in \eqref{las-ces}, Agmon's inequality, and inverse inequality
\begin{eqnarray*}
|T_4 | \lesssim t\int_0^t
\norm{\cuu^{1/2} \salto{\bomegaI_{\tau}}}_{0,\Eho \cup \EhD} \norm{\cuu^{-1/2} \av{\bcsi^h}_\delta}_{0,\Eho \cup \EhD}\,d\tau
  \lesssim t \int_0^t
\norm{\cuu^{1/2} \salto{\bomegaI_{\tau}}}_{0,\Eho \cup \EhD} \|\calA^{1/2} \bcsi^h\|_{0,\Th}\,d\tau\;.
\end{eqnarray*}
Next, we observe that $T_2$ ,$T_3$ and $T_5$ can be estimated using  Jensen and Cauchy-Schwarz  inequalities together with the arithmetic-geometric inequality with some $\epsilon>0$ to be chosen later on. For $T_5$ and $T_2$ one immediately gets,
\begin{align*}
|T_5| & \lesssim  
\epsilon \norm{\calD^{1/2} \beps(\bomegah)}_{0,\Th}^2
+ \frac{1}{\epsilon} \norm{\calA^{1/2} \bcsi^I}_{0,\Th}^2 
+t \int_0^t  \norm{\calA^{1/2} \bcsi_{\tau}^I}_{0,\Th}   \norm{\calD^{1/2}\beps(\bomegah)}_{0,\Th}\,d\tau\;, \\
|T_2| & \lesssim
 \epsilon \normsmall{\cuu^{1/2}\salto{\bomegah} }^2_{0,\Eho \cup \EhD} + \frac{1}{\epsilon} 
\normsmall{ \cuu^{1/2}\salto{\bomegaI} }^2_{0,\Eho \cup \EhD} 
+t \int_0^t \normsmall{ \cuu^{1/2}\salto{\bomegaI_{\tau}}}_{0,\Eho \cup \EhD}\normsmall{ \cuu^{1/2}\salto{\bomegah} }_{0,\Eho \cup \EhD}\,d\tau\;.
\end{align*}
For $T_3$ we proceed as before and also recall the definition of $\cuu$
\begin{align*}
|T_3|& \lesssim  
\epsilon\norm{\cuu^{1/2} \salto{\bomegah}}_{0,\Eho \cup \EhD}^{2}
+ \frac{1}{\epsilon} \norm{\calA^{1/2} \bcsi^I}_{0,\Th}^2 
+ t \int_0^t  \norm{\calA^{1/2} \bcsi_{\tau}^I}_{0,\Th}   \norm{\cuu^{1/2} \salto{\bomegah}}_{0,\Eho \cup \EhD}\,d\tau\;.
\end{align*}
By collecting all previous estimates, and recalling the definition of the $\normtreMDG{\cdot}$ norm  we obtain 
\begin{multline}\label{eq:stimaMDG}
 \normtreMDG{(\bomegah,\bcsi^h)}^{2} \lesssim \epsilon \left( \normsmall{\cuu^{1/2}\salto{\bomegah} }^2_{0,\Eho \cup \EhD} +\norm{\calD^{1/2} \beps(\bomegah)}_{0,\Th}^2\right) \\
+ \frac{1}{\epsilon} \left( \norm{\calA^{1/2} \bcsi^I}_{0,\Th}^2 +
\normsmall{ \cuu^{1/2}\salto{\bomegaI} }^2_{0,\Eho \cup \EhD}\right)  \\
+ t \int_0^t \left(\normtreMDG{(\bomegaI_{\tau},\bcsi^I_{\tau})} +\|\mathcal{D}^{1/2}\beps(\bomegaI_{\tau})\|_{0,\Th} \right) \left( \normtreMDG{(\bomegah,\bcsi^h)} +\|\mathcal{D}^{1/2}\beps(\bomegah)\|_{0,\Th} \right)\,d\tau\;.
\end{multline}
To conclude we need to bound the norm $\norm{\calD^{1/2} \beps(\bomegah)}_{0,\Th}$ in terms of $\normtreMDG{(\bomegah, \bcsi^h)}$. Here, we use the (second) error equation: by setting $\btau=\mathcal{D} \beps(\bomegah)$ in \eqref{err:01} we have:
\begin{multline*}
\norm{\calD^{1/2} \beps(\bomegah)}_{0,\Th}^{2}= ( \beps(\bomegaI), \mathcal{D} \beps(\bomegah))_{\Th}
- ( \calA (\es) , \mathcal{D} \beps(\bomegah))_{\Th}  
\\ + \langle \cdd\salto{\es}, \salto{\mathcal{D} \beps(\bomegah)}\rangle_{\Eho\cup \EhN} 
 + \langle \salto{\eu}, \av{\mathcal{D} \beps(\bomegah)}_{\delta}\rangle_{\Eho\cup \EhD}.
 \end{multline*}
Then, the Cauchy Schwarz inequality, the definition \eqref{las-ces} of the parameters $\cuu$ and $\cdd$, and reasoning as in the proof of Lemma~\ref{le:aux04}, give
\begin{equation*}
\begin{aligned}
\norm{\calD^{1/2} \beps(\bomegah)}_{0,\Th}
&\lesssim  \|\calD^{1/2} \beps(\bomegaI) \|_{0,\Th}  + \|\mathcal{A}^{1/2}\bcsi^I\|_{0,\Th} 
+ \|\cuu^{1/2}\salto{\bomegaI}\|_{0,\Eho\cup \EhD} 
+ \|\cdd^{1/2}\salto{\bcsi^I}\|_{0,\Eho\cup \EhN},\\
& \phantom{\lesssim} 
+ \|\mathcal{A}^{1/2}\bcsi^h\|_{0,\Th} 
+ \|\cuu^{1/2}\salto{\bomegah}\|_{0,\Eho\cup \EhD} 
+ \|\cdd^{1/2}\salto{\bcsi^h}\|_{0,\Eho\cup \EhN}\\
&\leq   \|\calD^{1/2} \beps(\bomegaI) \|_{0,\Th} + \normtreMDG{(\bomegaI, \bcsi^I)} + \normtreMDG{(\bomegah,\bcsi^h)}.
\end{aligned}
\end{equation*}
Using the previous estimate in \eqref{eq:stimaMDG} and choosing $\epsilon$ so that $1-C\epsilon>0$ gives
\begin{multline*}
 \normtreMDG{(\bomegah,\bcsi^h)}^{2} \lesssim \normtreMDG{(\bomegaI, \bcsi^I)}^2 
+  \|\calD^{1/2} \beps(\bomegaI) \|^2_{0,\Th}\\
+t \int_0^t  \left(\normtreMDG{(\bomegaI_{\tau},\bcsi^I_{\tau})} +\|\mathcal{D}^{1/2}\beps(\bomegaI_{\tau})\|_{0,\Th}\right)\left( \normtreMDG{(\bomegaI, \bcsi^I)}
 +\|\mathcal{D}^{1/2}\beps(\bomegaI)\|_{0,\Th} \right)  \,d\tau
\\
+ t \int_0^t \left(\normtreMDG{(\bomegaI_{\tau},\bcsi^I_{\tau})} +\|\mathcal{D}^{1/2}\beps(\bomegaI_{\tau})\|_{0,\Th}\right)\normtreMDG{(\bomegah,\bcsi^h)} \,d\tau.
\end{multline*}
Using now $t\leq T$ and  Gronwall's Lemma~\ref{Gronwall}, we obtain
\begin{equation*}
 \normtreMDG{(\bomegah,\bcsi^h)}\lesssim 
\sqrt{\mathcal{G}}
+T \int_0^t (\normtreMDG{(\bomegaI_{\tau},\bcsi^I_{\tau})} +\|\mathcal{D}^{1/2}\beps(\bomegaI_{\tau})\|_{0,\Th}) \,d\tau,
\end{equation*}
where
\begin{multline*}
\mathcal{G}=\normtreMDG{(\bomegaI, \bcsi^I)}^2  +  \|\calD^{1/2} \beps(\bomegaI) \|^2_{0,\Th} \\
+ T\int_0^T  
\normtreMDG{(\bomegaI_{\tau},\bcsi^I_{\tau})}^2 +\|\mathcal{D}^{1/2}\beps(\bomegaI_{\tau})\|^2_{0,\Th} + \normtreMDG{(\bomegaI, \bcsi^I)}^2 +\|\mathcal{D}^{1/2}\beps(\bomegaI)\|^2_{0,\Th} 
\,d\tau.
\end{multline*}
The proof is complete using interpolation estimates \eqref{eq:errMDGinterp} and \eqref{eq:errMDGinterp2} and taking the supremum over $t\in (0,T]$.
\end{proof}
\subsection{DG methods for the displacement formulation}
The displacement variational formulation for the continuous solution $\bu$ allowing for discontinuous test functions can be written as:
\begin{equation*}
(\rho  \bu_{tt}, \bv)_{\Th} +( \beps( \bu), \calD \beps(\bv))_{\Th}
 - \langle \av{\calD \beps(\bu)}_\delta, \salto{\bv}\rangle_{\EhoD} 
= ( \bfe , \bv )_{\Th} + ( \bg , \bu_t )_{\calF_h^N}, \quad \forall \bv \in \V
\end{equation*}
where we have used \eqref{per-tens0} and that $\salto{\calD\beps(\bu)} = {\bf 0}$. The error equation is obtained by subtracting  \eqref{dg:ip3} from the above equation, using that $\salto{\bu} = {\bf 0}$ on $F\in \Eh^I \cup \calF_h^D$ and recalling the definition of $a(\cdot,\cdot)$:
\begin{equation}\label{err:0:0}
(\rho \eu_{tt}, \bv)_{\Th} + a(  \eu, \bv) =0 \qquad \forall\, \bv \in \V,
\end{equation}
where $\eu=\bu - \bu^h$.\\

Before stating the main theorem of this section, we observe that with standard arguments it can be proved
\begin{equation}\label{normIPA}
\begin{aligned}
&|a(\bv, \bw)|\lesssim\normA{\bv}\normA{\bw}\lesssim \normtreIP{\bv}\normtreIP{\bw}
&& \forall \bv,\bw \in {\bf H}^2(\Th),
\end{aligned}
\end{equation}
where $\normA{\cdot}$ is defined as in \eqref{norm:ip}.

\begin{theorem}\label{Teo:ErrIP}
Let $\bu$ be the solution of \eqref{eqd:1}--\eqref{eqd:6}, and let $\bu^h \in \V$ be the approximated solution obtained with the SIP($\delta$) method defined in Section~\ref{stability:sec}. 
Assume that the penalty parameter $\czz$ appearing in \eqref{def:penalty_SIP} is large enough.
Then, 
\begin{multline}\label{eq:ErrIP}
\sup_{0<t\leq T}\normtre{\bu(t) -\bu^h(t)}_{\calE,\textrm{IP}} 
\lesssim
h^{k}  \sup_{0<t\leq T} \left(
|\bu(t) |_{k+1,\O}^2 + h^2 | \bu_{\tau}(t) |^2_{k+1,\O}\right)^{1/2}
\\
+h^{k}  \ \int_0^T  \left(
|\bu_{\tau}(\tau) |_{k+1,\O}^2 + h^2 | \bu_{\tau\tau}(\tau) |^2_{k+1,\O}\right)^{1/2} \,d\tau,
\end{multline}
where the hidden constant depends on the polynomial degree $k$, $\Dsup$, $\rsup$, the shape regularity constant of the mesh $\Th$ and depens linearly on the time $T$.
\end{theorem}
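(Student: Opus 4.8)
The plan is to mimic the stability argument of Proposition~\ref{stab:prim} and the mixed-formulation error proof of Theorem~\ref{Teo:ErrLDG}, working throughout with the displacement error equation \eqref{err:0:0}. First I would split the error as $\eu = \bomegaI - \bomegah$, where $\bomegaI = \bu - \bu^I$ is the interpolation error and $\bomegah = \bu^h - \bu^I \in \V$ is the discrete auxiliary error, and assume the discrete initial data are chosen so that $\bomegah(0) = {\bf 0}$ and $\bomegah_t(0) = {\bf 0}$. Inserting $\bv = \bomegah_t \in \V$ into \eqref{err:0:0} and using the splitting gives
\begin{equation*}
(\rho\,\bomegah_{tt}, \bomegah_t)_{\Th} + a(\bomegah, \bomegah_t)
= (\rho\,\bomegaI_{tt}, \bomegah_t)_{\Th} + a(\bomegaI, \bomegah_t).
\end{equation*}
Because the choice $\theta = -1$ renders $a(\cdot,\cdot)$ symmetric, the left-hand side is exactly $\tfrac{1}{2}\tfrac{d}{dt}\bigl(\|\rho^{1/2}\bomegah_t\|_{0,\Th}^2 + a(\bomegah,\bomegah)\bigr)$.

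Integrating in time and invoking the coercivity of $a$ --- valid for $\czz$ large enough, via the same estimate \eqref{coso:5} used in the stability proof --- I would obtain $\|\rho^{1/2}\bomegah_t\|_{0,\Th}^2 + a(\bomegah,\bomegah) \gtrsim \normIP{\bomegah}^2$, so that the initial terms drop out and the task reduces to bounding $\int_0^t\bigl[(\rho\,\bomegaI_{\tau\tau},\bomegah_\tau)_{\Th} + a(\bomegaI,\bomegah_\tau)\bigr]\,d\tau$. The first integrand is handled directly by Cauchy--Schwarz. The second term is the crux: it pairs $\bomegaI$ against $\bomegah_\tau$, a time derivative I do not control in the energy norm, so I would integrate by parts in time, using that $a$ is a time-independent bilinear form,
\begin{equation*}
\int_0^t a(\bomegaI, \bomegah_\tau)\,d\tau
= a(\bomegaI(t), \bomegah(t)) - \int_0^t a(\bomegaI_\tau, \bomegah)\,d\tau,
\end{equation*}
where the endpoint contribution at $\tau = 0$ vanishes thanks to $\bomegah(0) = {\bf 0}$.

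Applying the continuity bound \eqref{normIPA} to the two surviving $a(\cdot,\cdot)$ terms produces factors $\normtreIP{\bomegah}$; since $\bomegah$ is a polynomial, inverse and trace inequalities yield the equivalence $\normtreIP{\bomegah} \lesssim \normIP{\bomegah}$ on the discrete space, so these factors can be reabsorbed. The arithmetic--geometric inequality then lets me hide the resulting $\epsilon\,\normIP{\bomegah}^2$ coming from $a(\bomegaI(t),\bomegah(t))$ on the left-hand side, leaving a Gronwall-type inequality $\normIP{\bomegah(t)}^2 \lesssim \normtreIP{\bomegaI}^2 + T\!\int_0^t(\cdots)\,\normIP{\bomegah}\,d\tau$. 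Gronwall's Lemma~\ref{Gronwall} (with $t\leq T$) then bounds $\sup_t\normtreIP{\bomegah}$ in terms of $\normtreIP{\bomegaI}$ and $\int_0^T\bigl(\normtreIP{\bomegaI_\tau} + \|\calD^{1/2}\beps(\bomegaI_\tau)\|_{0,\Th}\bigr)\,d\tau$, and the triangle inequality together with the interpolation estimate \eqref{eq:errIPinterp} finishes the proof. The main obstacle is precisely this time-integration-by-parts step combined with the reabsorption of $\normtreIP{\bomegah}$: one must use the symmetry ($\theta=-1$) crucially to turn $a(\bomegah,\bomegah_t)$ into a total time derivative, and rely on the polynomial nature of $\bomegah$ to pass from the augmented norm back to the energy norm.
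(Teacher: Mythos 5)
Your proposal is correct and follows essentially the same route as the paper: the same splitting $\eu=\bomegaI-\bomegah$, testing the error equation with $\bomegah_t$, recognizing the left side as a total time derivative of the (symmetric) energy, integrating by parts in time on $\int_0^t a(\bomegaI,\bomegah_\tau)\,d\tau$, absorbing via coercivity for $\czz$ large, and closing with Gronwall and the interpolation estimates. Your explicit remarks on $\bomegah_t(0)={\bf 0}$ and the equivalence $\normtreIP{\bomegah}\lesssim\normIP{\bomegah}$ on the discrete space make precise two points the paper leaves implicit, but the argument is the same.
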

\begin{proof}
Writing $\eu = \bomegaI - \bomegah =( \bu - \bu^I)- ( \bu^h - \bu^I)$,
\begin{equation*}
\eu = \bomegaI - \bomegah =( \bu - \bu^I)- ( \bu^h - \bu^I)\;,
\end{equation*}
the triangle inequality gives
\begin{equation*}
\normtreIP{\bu - \bu^h} \leq \normtreIP{ \bomegaI} +\normtreIP{ \bomegah}.
\end{equation*}
It is enough to estimate  $\normtreIP{ \bomegah}$ since
the other term can be bounded by the interpolation estimates \eqref{eq:errIPinterp}.  We follow the stability proof and observe that
the error equation \eqref{err:0:0} becomes
\begin{equation*}
(\rho \bomegah_{tt}, \bv)_{\Th}+a( \bomegah ,\bv) =(\rho \bomegaI_{tt}, \bv)_{\Th} +a(\bomegaI, \bv) \qquad \forall\, \bv \in \V\;.
\end{equation*}
By setting $\bv = \bomegah_t$, we have\begin{equation*}
\frac{1}{2} \frac{d}{dt} \left( \normIP{\bomegah}^2   -2 \langle \salto{\bomegah}, \av{\calD \beps(\bomegah)}_\delta \rangle_{\Eho\, \cup \,\calF_h^D} \right)  
 = (\rho \bomegaI_{tt}, \bomegah_t)_{\Th} +a(\bomegaI ,\bomegah_t)\;.
\end{equation*}
Notice that the right hand side is exactly in the form of \eqref{eq:stab_SIPG} with $\bv_h=\bomegah$ (see proof of Proposition \ref{stab:prim}).  
Integrating in time between $0$ and $t$ and  using that $\bomegah(0) = {\bf 0}$,  we get
\begin{equation*}
\begin{aligned}
\normtreIP{\bomegah}^2   \lesssim&   \int_0^t (\rho \bomegaI_{tt}, \bomegah_{t} )_{\Th} \,d\tau+ \int_0^t  a (\bomegaI, \bomegah_{t} )
\,d\tau\\
 \lesssim&\,
t \int_0^t \norm{\rho^{1/2} \bomegaI_{\tau\tau}}_{0,\Th}
\norm{\rho^{1/2} \bomegah_{\tau}}_{0,\Th}\,d\tau + a (\bomegaI ,\bomegah ) - \int_0^t a (\bomegaI_{t}, \bomegah )\,d\tau,\\
\end{aligned}
\end{equation*}
where in the last step we have used Jensen and Cauchy-Schwarz inequalities for the first term and formula \eqref{eq:ibp_formula} with $\bw=\bomegaI$ and $\bz=\bomegah$, together with $\bomegah(0) = {\bf 0}$ for the second one.
Using continuity of $a(\cdot,\cdot)$, see \eqref{normIPA}, and the arithmetic geometric inequality with $\epsilon>0$, we obtain
\begin{equation*}
\begin{aligned}
(1 -C\epsilon)\normtreIP{\bomegah}^2    \lesssim & \frac{1}{\epsilon} \normtreIP{\bomegaI}^{2} + 
t \int_0^t  \normtreIP{\bomegaI_t} \normtreIP{\bomegah } \,d\tau.
\end{aligned}
\end{equation*}
Choosing $\epsilon$ small enough, using $t\leq T$  and applying Gronwall's Lemma~\ref{Gronwall}, we get
 \begin{equation*}
 \normtreIP{\bomegah}  \lesssim  \normtreIP{\bomegaI} + T  \int_0^t  \normtreIP{\bomegaI_t}   \,d\tau \;.
 \end{equation*}
The proof is concluded by using  the interpolation estimates \eqref{eq:errIPinterp} and taking the supremum over $t \in (0,T]$.

 \end{proof}

\section{Numerical results}\label{num_res}
\label{sec:numerics}
To conclude our analysis we present some numerical results obtained with the DG spectral element code SPEED (\verb+http://mox.polimi.it/it/progetti/speed+), cf. \cite{MazzieriStupazziniGuidottiSmerzini_2013}, where the fully discrete solution is recovered by coupling our semidiscrete formulation with the second order accurate explicit leap-frog time integration scheme.

We start showing a test case confirming the (optimal) theoretical convergence rates shown in  Section~\ref{sec:error_estimates}. For brevity, we only report the results obtained with the SIP($\delta$) method  with $\delta=1/2$, cf. Section~\ref{sec:DGprimal}, but similar results can be obtained with the NIP and IIP methods in \eqref{dg:ip3}. \\

We solve a wave propagation problem in $\Omega = (0,1)^3$, set the Lam\'e parameters $\lambda,\mu$  and the mass density $\rho$ equal to $1$, and choose $\bff$ such that problem \eqref{eqd:1}--\eqref{eqd:6} features the exact solution 
\begin{equation*}
\bu(\bx,t) = \sin(3 \pi t)\left[ \begin{array}{c} -\sin^2(\pi x)\sin(2\pi y)\sin(2\pi z) \\ \phantom{-}\sin(2\pi x)\sin^2(\pi y)\sin(2\pi z) \\ \phantom{-}\sin(2\pi x)\sin(2 \pi y)\sin^2(\pi z) \end{array} \right].
\end{equation*}
The Dirichlet boundary conditions on the whole $\partial\Omega$, the initial displacement $\bu_0$, and initial velocity $\bu_1$ are set accordingly.
We consider a Cartesian decomposition of the domain $\Omega$ and define four levels of refinements, corresponding to mesh sizes $h=0.5,0.25,0.125,0.0625$ (resp. $h=0.25,0.125,0.0625,0.03125$) for a polynomial approximation degree $k \geq 2$ (resp. $k=1$).
 The simulations have been carried out for a time interval $[0,10]$,  using a time step $\Delta t = 1 \cdot 10^{-5}$. Notice that a so small time step together with the smooth temporal behavior of $\bu$, guarantee that the temporal component of the error does not affect the spatial component. The IP energy norm has been evaluated at each discrete time $t_n = t_0 + n\Delta t$, for $n=1,...,10^6$ so that 
the norm $\max_{0<t_n\leq T} \| (\bu - \bu^h)(t_n) \|_{\calE,\textrm{IP}}$ represents a good approximation of the norm in Theorem~\ref{Teo:ErrIP}.
\begin{figure}[!htbp]
\centering
\includegraphics[width=0.65\columnwidth]{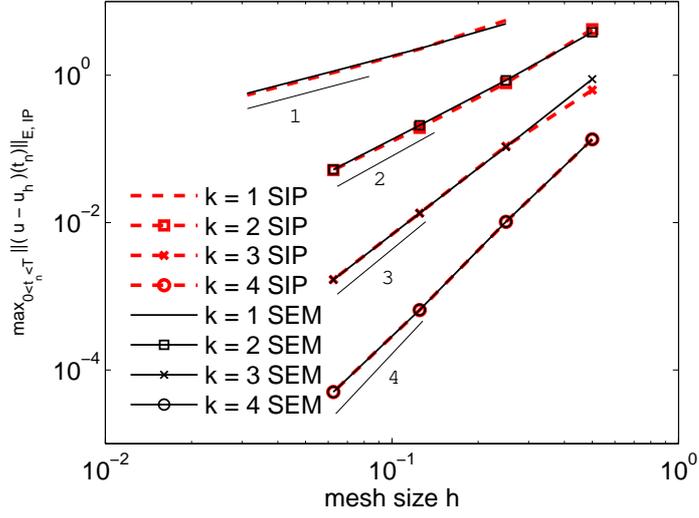}
\caption{Computed energy error versus the mesh size $h$, for different polynomial approximation degrees $k=1,2,3,4$ (loglog scale).}\label{Figure1}
\end{figure}
%
Since high order spatial approximation of elastodynamics problems have been previously addressed in the context of spectral or spectral element methods (with continuous polynomial matching across interelements), cf. \cite{QuarteroniZampieri_1992,CividiniQuarteroniZampieri_1993,
Faccioli01081996,FaMaPaQu97,KoVi98}, for completeness, we therefore compare the numerical results obtained with the SIP($\delta$) method to the analogous ones obtained with the spectral element method (SEM). For the SE approximation the spatial error has been evaluated using the energy norm  defined in \eqref{norm:ip}, obviously neglecting the last term.    

\begin{table}[!htbp]
\caption{Computed convergence rates for different polynomial approximation degrees $k=1,2,3,4$.}\label{Table1}
\centering
\begin{tabular}{|c|c|c|c|c|}
\hline
 & \multicolumn{4}{c|}{k} \\
\hline
method    & 1 & 2 & 3 & 4 \\
\hline
 SIP($\delta$) &  1.1212 &  2.1157 & 2.8478 & 3.7973\\
\hline
 SEM &  0.9492 &  2.0622 & 3.0135 &  3.7973\\
\hline
\end{tabular}
\end{table}
In Figure~\ref{Figure1} we show the  computed error as a function of the mesh size $h$ for different polynomial approximation degrees $k=1,2,3,4$;
the corresponding computed convergence rates are reported in Table \ref{Table1}. 
The numerical results confirm the theoretical results proved in Theorem~\ref{Teo:ErrIP} and demonstrate once again the $h-$optimality of DG discretizations.\\ 

We now investigate the stability of the NIP method \eqref{dg:ip3}, (with $\theta=1$ in \eqref{eq:defa}) and the corresponding modification given in \eqref{extraAA} (originally introduced in  \cite{RiWh03}).
In Figure~\ref{Figure2} we show the computed error as a function of the time step $\Delta t$ for a fixed mesh size $h=0.125$ varying the  polynomial approximation degree $k=1,2,3$. 
\begin{figure}
\centering\includegraphics[width=0.65\columnwidth]{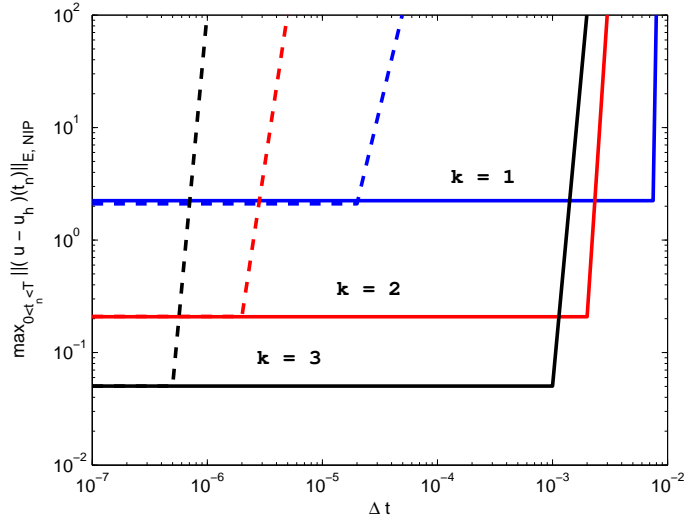}
\caption{Computed energy error versus the time step $\Delta t$ for different polynomial approximation degrees $k=1,2,3$ and $h=0.125$ (loglog scale). NIP method \eqref{dg:ip3} (--) vs  modified NIP \eqref{extraAA}  \cite{RiWh03} method (- -).}\label{Figure2}
\end{figure}
Similar results where obtained for  different  mesh sizes $h$. As it can be seen from the graphics, the presence of the additional stabilization term imposes a much severe restriction  on the time step size required to guarantee stability in practice, than for the original IP method. So even if the extra term is helpful for the theoretical analysis of the method \eqref{extraAA}, it needs to be handled extremely carefully in the numerical simulations, in order to guarantee the stability in practice, and it seems to impose a much stringent condition on the time step $\Delta t$.  From these experiments,  we infer that the efficiency of the modified scheme \eqref{extraAA} seems to be lower than that of the plain IP \eqref{dg:ip3}.
\begin{figure}
\centering
\includegraphics[width=0.65\columnwidth]{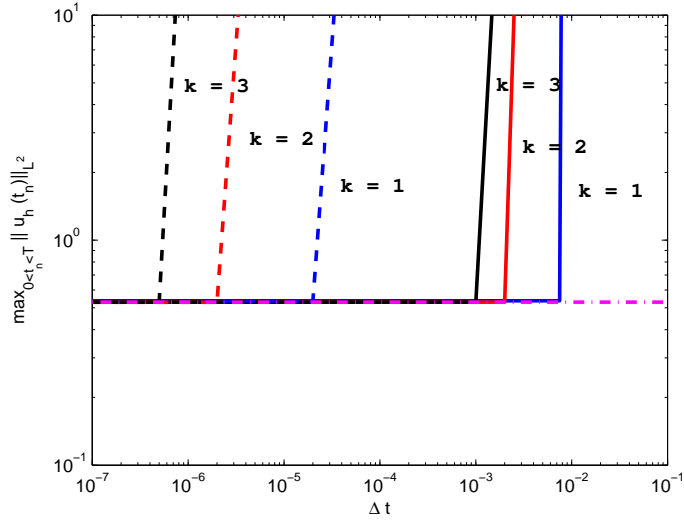}
\caption{Computed $L^\infty(L^2)$ norm of $\bu_h$ versus the time step $\Delta t$, for different polynomial approximation degrees $k=1,2,3$ (loglog scale) and $h=0.125$. NIP method \eqref{dg:ip3} (--) vs  modified NIP\eqref{extraAA} -\cite{RiWh03} method (- -). In the plot is also reported the $L^\infty(L^2)$-norm  of the exact solution $\bu$ (.-).}\label{Figure3}
\end{figure}

\begin{figure}[!htpb]
\centering
\includegraphics[width=0.65\columnwidth]{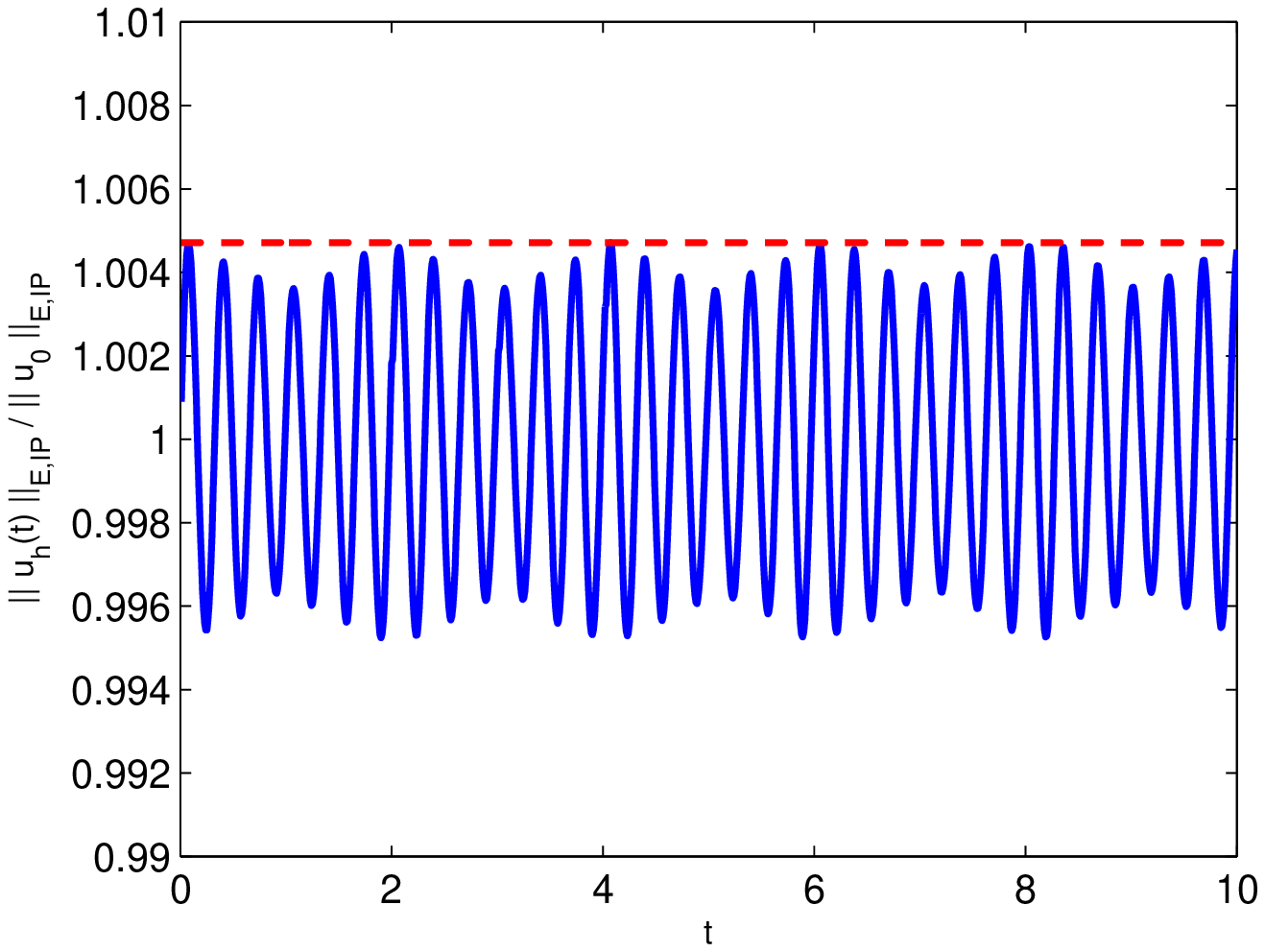}
\caption{Solid line: computed ratio $\displaystyle {\| \bu^h(t_n) \|_{\calE,\textrm{IP}}}/{\| \bu_0^h \|_{\calE,\textrm{IP}}}$ as a function of time $t$ for $h=0.125$, $k=3$ and  $\Delta t = 1 \cdot 10^{-4}$. 
Dashed line: upper bound $C=\displaystyle \max_{0< t \leq 10} {\| \bu^h(t) \|_{\calE,\textrm{IP}}}/{\| \bu_0^h \|_{\calE,\textrm{IP}}}$.}\label{Figure4}
\end{figure}
Finally, to conclude the analysis, we consider the homogeneous elastic domain $\Omega = (0,1)^3$, with   $\lambda,\mu,\rho = 1$ not excited by external loads, i.e., ${\bf f} = {\bf 0}$.  We set homogeneous Dirichlet boundary conditions and initial conditions 
\begin{equation*}
\bu_0(\bx) = {\bf 0}, \qquad \bu_1(\bx) =  \left[ \begin{array}{c} - 3 \pi \sin^2(\pi x)\sin(2\pi y)\sin(2\pi z) \\ \phantom{-}3 \pi \sin(2\pi x)\sin^2(\pi y)\sin(2\pi z) \\ \phantom{-}3 \pi \sin(2\pi x)\sin(2 \pi y)\sin^2(\pi z) \end{array} \right].
\end{equation*}
for the displacement and the velocity field, respectively. \\For the sake of brevity we report only the results obtained with the SIP($\delta$) approximation, for  $\delta=1/2$,  fixing the mesh size $h=0.125$, the polynomial degree $k=3$, and the time step $\Delta t = 1 \cdot 10^{-4}$ for a time interval $[0,10]$. 
In Figure~\ref{Figure4} we report the ratio ${\| \bu^h(t_n) \|_{\calE,\textrm{IP}}}/{\| \bu_0^h \|_{\calE,\textrm{IP}}}$   as a function of  time $t$. It is easy to see that the above ratio is bounded for all $t \in [0,10]$, verifying the property \textit{(i)} of Proposition~\ref{stab:prim}, i.e.,  ${\| \bu^h(t_n) \|_{\calE,\textrm{IP}}}/{\| \bu_0^h \|_{\calE,\textrm{IP}}} \leq  C$.  
\FloatBarrier

\section*{Acknowledgments}
Part of this work has been completed during several visits of the second author to the 
IMATI-CNR of Pavia. She is grateful to the IMATI for the kind
hospitality.  The second author was partially supported by MEC through
the project MTM2011-27739-C04-04.

\appendix
\section{Proof of Lemma~\ref{le:0}}
\label{sec:proof-stability}
\begin{proof}[Proof of Lemma~\ref{le:0}] The proof of the first estimate is achieved in three steps.\\
\noindent {\bf Step I.} We consider the weak formulation \eqref{weak:1}---\eqref{weak:2} and take as test function $\bv=\bu_t  \in {\bf H}^{1}_{0,\Gamma_D}(\O)$ in \eqref{weak:1}, getting
\begin{equation}\label{cont_u} 
( \rho \bu_{tt} , \bu_t )_{\O} + ( \bsig , \beps(\bu_t) )_{\O}  = ( \bfe , \bu_t )_{\O} + ( \bg , \bu_t )_{\Gamma_N}.   
\end{equation}
Notice that we have used implicitly the time derivative of the Dirichlet boundary condition, expressed in equation \eqref{eqd:3}.\newline
\noindent {\bf Step II.} We differentiate with respect to time the constitutive equation \eqref{eqd:2} 
\begin{equation}\label{2dt}
 \calA \bsig_t-\beps(\bu_t)=0  \qquad  \textrm{in} \; \Omega \times (0,T], 
\end{equation}
and we consider its weak formulation obtained taking as a test function $\bsig\in \vect{\mathcal{L}}^{2}(\O)$:
\begin{equation*} 
\displaystyle ( \calA \bsig_t , \bsig )_{\O} - ( \beps(\bu_t) , \bsig )_{\O} = 0.
\end{equation*}
\noindent {\bf Step III.} Summing up  the above equation  and \eqref{cont_u}, we have
\begin{equation*}
 ( \rho \bu_{tt} ,  \bu_t )_{\O} + ( \calA  \bsig_t , \bsig )_{\O} =  ( \bfe , \bu_t )_{\O} + (\bg , \bu_t )_{\Gamma_N},   
\end{equation*}
or equivalently
\begin{equation*}
\frac{1}{2} \frac{d}{dt} \left( \| \rho^{1/2}  \bu_{t} \|^2_{0,\O}  + \| \calA^{1/2} \bsig \|_{0,\O}^2 \right)  =  ( \bfe , \bu_t )_{\O} + ( \bg , \bu_t )_{\Gamma_N}.   
\end{equation*}
Integration in time of the above identity, taking into account the definition of the energy norm \eqref{norm:E}, yields to  
\begin{equation}\label{stima_en1}
 \frac{1}{2}\| (\bu,\bsig) \|_{\calE}^2 =  \frac{1}{2} \| (\bu_0,\bsig_0) \|_{\calE}^2 + \int_0^t  ( \bfe ,  \bu_\tau)_{\O} \, d\tau + \int_0^t ( \bg , \bu_{\tau} )_{\Gamma_N} \, d\tau .  
\end{equation}
If  $\bff=\bg= {\bf 0}$ the result trivially follows.
Otherwise, to obtain the thesis we need to estimate the last two integrals above. For the first one, the Cauchy-Schwarz inequality gives
\begin{equation}\label{st:00}
\left|\int_0^t  ( \bfe ,  \bu_\tau)_{\O} \,d\tau\right| 
\leq   \int_0^t   \rinf^{-1/2} \| \bfe \|_{0,\O}\|\rho^{1/2} \bu_\tau \|_{0,\O} \, d\tau 
\leq  \int_0^t   \rinf^{-1/2} \| \bfe \|_{0,\O}\norm{(\bu ,\bsig)}_{\calE} \, d\tau,
\end{equation}
where we have also used the lower bound on the material density \eqref{material:0}. 
Before dealing with the last integral in \eqref{stima_en1}, we preliminarily observe that the trace theorem  (cf. \cite{AdamsFournier2003}) together with Korn's inequality (cf. \cite{DuvautLions_1976}, for example) and the lower bound in \eqref{cota:D} gives:
\begin{equation}\label{eq:est-trace}
\|\bu\|_{0,\Gamma_N}^2 
 \leq \|\bu\|_{1/2,\Gamma_N} ^2
\lesssim \| \beps(\bu)\|_{0,\O}  ^2
=  \|\calA\bsig\|_{0,\O}^2
\leq \Dinf^{-1}  \|(\bu ,\bsig)\|_{\calE}^2.
\end{equation}
We rewrite the last integral in \eqref{stima_en1} by using Fubini's theorem together with integration by parts (with respect to time)
\begin{align*}
\int_0^t ( \bg , \bu_{\tau} )_{\Gamma_N}\, d\tau &= \int_{\Gamma_N} \int_0^t   \bg  \cdot \bu_{\tau} \,  d\tau \,ds \\
&=\int_{\Gamma_N} \left[\bg (0) \cdot\bu(0) - \bg (t) \cdot\bu(t) \right]\,ds- \int_{\Gamma_N} \int_0^t  \bg_{\tau}  \cdot \bu \, d\tau \,ds  \\
&=(\bg_0,\bu_0)_{0,\Gamma_N} - ( \bg , \bu)_{0,\Gamma_N} -  \int_0^t  (\bg_{\tau} , \bu)_{0,\Gamma_N} \, d\tau.
\end{align*}
The above identity together with the Cauchy-Schwarz and the  arithmetic-geometric inequalities lead to
\begin{multline*}
\left| \int_0^t ( \bg , \bu_{\tau} )_{\Gamma_N} d\tau \right| 
\leq \frac{1}{2} \|\bg_0\|_{0,\Gamma_N}^{2} 
+ \frac{1}{2} \|\bu_0\|_{0,\Gamma_N}^{2} 
+\epsilon \|\bg\|_{0,\Gamma_N}^{2} 
+\frac{1}{\epsilon} \|\bu\|_{0,\Gamma_N}^{2}
+ \int_0^t \|\bg_{\tau}\|_{0,\Gamma_N} \|\bu\|_{0,\Gamma_N}\, d\tau,
\end{multline*}
where $\epsilon>0$ is still at our disposal. Using 
\eqref{eq:est-trace}  (also for the initial data, which is assumed to be compatible) we finally obtain 
\begin{multline*}
\left| \int_0^t ( \bg , \bu_{\tau} )_{\Gamma_N} d\tau \right|  
\lesssim 
\frac{1}{2} \|\bg_0\|_{0,\Gamma_N}^{2}
+ \frac{\Dinf^{-1}}{2}   \|(\bu_0 ,\bsig_0)\|^{2}_{\calE} 
+  \epsilon \Dinf^{-1} \|\bg\|_{0,\Gamma_N}^{2}  \\
+ \frac{\Dinf^{-1}}{\epsilon}   \|(\bu ,\bsig)\|^{2}_{\calE} 
+   \int_0^t  \|\bg_{\tau}\|_{0,\Gamma_N} \|(\bu ,\bsig)\|_{\calE} d\tau.
\end{multline*}
Substituting the above estimate together with \eqref{st:00} in the identity \eqref{stima_en1}, we  get
\begin{multline*}
\left(\frac{1}{2} -C\frac{\Dinf^{-1}}{\epsilon}\right)\| (\bu,\bsig) \|_{\calE}^2 \lesssim
 \frac{1}{2}  \|\bg_0\|_{0,\Gamma_N}^{2}
+\frac{(\Dinf^{-1}+1)}{2} \|(\bu_0 ,\bsig_0)\|^{2}_{\calE} \\
+ \epsilon \Dinf^{-1}\|\bg\|_{0,\Gamma_N}^{2}  
+  \int_0^t  \left( \rinf^{-1/2} \| \bfe \|_{0,\O} + \norm{\bg_{\tau}}_{0,\Gamma_N} \right)\norm{(\bu ,\bsig)}_{\calE} \, d\tau.
\end{multline*}
Choosing $\epsilon$ so that $1/2 -C\Dinf^{-1}\epsilon^{-1}>0$ we  have
\begin{multline*}
\| (\bu,\bsig) \|_{\calE}^2 \lesssim
\frac{1}{2}  \|\bg_0\|_{0,\Gamma_N}^{2}
+\frac{(\Dinf^{-1}+1)}{2} \|(\bu_0 ,\bsig_0)\|^{2}_{\calE} \\
+  \Dinf^{-1}\|\bg\|_{0,\Gamma_N}^{2}  
+  \int_0^t  \left( \rinf^{-1/2} \| \bfe \|_{0,\O} + \norm{\bg_{\tau}}_{0,\Gamma_N} \right)\norm{(\bu ,\bsig)}_{\calE} \, d\tau.
\end{multline*}
The proof is complete applying Gronwall's lemma
\begin{equation*}
\| (\bu,\bsig) \|_{\calE} \lesssim
\sqrt{\mathcal{G}} + \frac{1}{2} \int_0^t  \left( \rinf^{-1/2} \| \bfe \|_{0,\O} + \norm{\bg_{\tau}}_{0,\Gamma_N} \right) \, d\tau,
\end{equation*}
with 
\begin{equation*} 
\mathcal{G}=
 \frac{1}{2}  \|\bg_0\|_{0,\Gamma_N}^{2}
+\frac{(\Dinf^{-1}+1)}{2}\|(\bu_0 ,\bsig_0)\|^{2}_{\calE} \\
+  \Dinf^{-1} \sup_{0 < t \leq T} \|\bg\|_{0,\Gamma_N}^{2}.
\end{equation*}
 \end{proof}
\section{Proof of Lemma~\ref{le:aux00} and Lemma~\ref{le:aux04}}\label{sec:ProofAuxiliary}
In this appendix we collect the proofs of the auxiliary Lemmas~\ref{le:aux00} and \ref{le:aux04}, used in the stability analysis.
\begin{proof}[Proof of Lemma~\ref{le:aux00}]
The proof goes along the same lines as in the continuous case with subtle modifications to obtain bounds independent of $h$. 

Estimate \eqref{aux:00} follows from  the Cauchy-Schwarz  inequality  together with the lower bound of the mass density \eqref{material:0}. 
To show  estimate \eqref{aux:02}, we proceed similarly to get
\begin{equation}\label{rr:00}
\left|  \int_0^t  \langle \cdd \bg_\tau, \bsig^{h}\,\n\rangle_{\EhN} \, d\tau\right| \leq   \Dinf^{-1/2} \int_0^t  \| \bg_\tau \|_{1/2, \Gamma_N} \|\cdd^{1/2} \bsig^{h}\n\|_{0,\EhN}\, d\tau \;.
\end{equation}
Next, we notice that for each $t\in [0,T]$, the map $\bg(t)$ belongs to ${\bf H}^{1/2}(\Gamma_N)$. The inverse trace theorem \cite{AdamsFournier2003}  guarantees that the trace operator has a continuous right inverse operator, say $\mathfrak{T}: {\bf H}^{1/2}(\Gamma_N) \longrightarrow {\bf H}^{1}(\Omega)$. Hence, taking into account the scaling of the parameter $\cdd$  and using the trace inequality \eqref{trace:0} we have
\begin{equation*}
\begin{aligned}
\|\cdd^{1/2}\bg\|_{0,F}^{2}=c_2 h_F k^{-2}  \av{\calD}^{-1} \|\bg\|_{0,F}^{2} \lesssim  c_2 k^{-2} \Dinf^{-1}\|\bg \|_{1,\K}^{2}  
&& \forall\, F\in \EhN,  F \subset \partial \K\;,
\end{aligned}
\end{equation*}
where, with an abuse of notation, we have denoted by $\bg =\mathfrak{T}\bg$ the extension of $\bg$. Summing over all $F\in \EhN$ and using the continuity of the operator $\mathfrak{T}$ we get
\begin{equation}\label{eq:pongo-ora}
 \norm{\cdd^{1/2} \bg_\tau}^{2}_{0,\EhN} 
\lesssim  \sum_{\K \in \Th }
\Dinf^{-1}\|\bg_\tau \|_{1,\K}^{2}  =  \Dinf^{-1}\|\bg_\tau\|^{2}_{1,\O} \lesssim  \Dinf^{-1}\| \bg_\tau \|_{1/2, \Gamma_N}^{2}. 
\end{equation}
Substitution of the above estimate in \eqref{rr:00} gives \eqref{aux:02}.\\
To prove \eqref{aux:01}, we use integration by parts formula \eqref{eq:ibp_formula} with $\bw=\bg$ and $\bz=\bu^h$, together with triangle and Jensen' inequality to get
\begin{equation}\label{formula-00}
\left| \int_0^t \langle\bg , \bu^{h}_{\tau}\rangle_{\EhN} \, d\tau \right|
\leq \left|\langle\bg_{0},\bu^{h}_0\rangle_{\EhN}\right|   +\left|\langle \bg , \bu^{h}\rangle_{\EhN}\right|  + t\int_0^t  \left| \langle\bg_{\tau} , \bu^{h}\rangle_{\EhN}\right|\,d\tau.
\end{equation}
Therefore, we only need to estimate the  inner product $|\langle \bg , \bu^{h}\rangle_{\EhN}|$, where the first argument could be either $\bg_0$, $\bg$ or $\bg_{\tau}$.
Applying H\"older's inequality, the trace inequality \eqref{trace:0p} and inequality  \eqref{ine:00} with $\omega=F\in \EhN$ gives
\begin{equation*}
\begin{aligned}
\left|\int_{F} \bg, \bu^{h} ds 
\right| &
\leq
\|\bg \|_{L^{q}(F)} \|\bu^{h}\|_{L^{p}(F)} 
\lesssim
\|\bg\|_{L^{q}(F)} h^{-1/p} \| \bu^{h}\|_{\vect{W}^{1,p}(\K)} \\
  &\lesssim
 \|\bg\|_{L^{q}(F)} h^{-1/p}  h^{d(\frac{1}{p}-\frac{1}{2})}\| \bu^{h}\|_{1,\K}
= \|\bg\|_{L^{q}(F)}  h^{\frac{2d-2 -dp}{2p}} \| \bu^{h}\|_{1,\K}\;,
   \end{aligned}
 \end{equation*}
where, for any $F\in \EhN$,  $K$ is the only element in $\calT_h$ such that $F \subset \partial K$.
Setting now $p=(2d-2)/d$ (whose conjugate is $q=\frac{(2d-2)}{(d-2)}$) the above inequality becomes
\begin{equation}\label{eqo:0}
\left|\int_{F} \bg, \bu^{h} ds 
\right|  \lesssim   \|\bg\|_{L^{q}(F)}   \| \bu^{h}\|_{1,\K}. \end{equation}
Notice that $q=\infty$ for $d=2$ and $q=4$ for $d=3$.
Using that $F$ is a $d-1$ dimensional element and using the continuity of the Sobolev embedding $H^{1}(F) \longrightarrow L^{q}(F)$ \cite{AdamsFournier2003}, we have
  \begin{equation*}
\begin{aligned}
& \|\bg\|_{\bL^{q}(F)} \lesssim \|\bg\|_{1, F} 
&&  \forall\, \bg \in {\bf H}^{1}(F), 
&& q=\frac{(2d-2)}{(d-2)}.
\end{aligned}
  \end{equation*}
 Substituting  the above bound in \eqref{eqo:0} and summing over all faces $F \in \calF_h^N$, gives 
 \begin{equation}\label{eq:estimate:gtilde}
 \begin{aligned}
 \left|\langle \bg , \bu^{h}\rangle_{\EhN}\right| \lesssim&  \|\bg\|_{1,\Gamma_N}\left( \|\bu^{h}\|_{0,\Th}^{2} + \abs{\bu^{h}}_{1,\Th}^{2} \right)^{1/2}.
 \end{aligned}
\end{equation}
Applying the discrete Poincar\'e and Korn inequalities  \cite{Brenner_2003,Br04}, and the bound in \eqref{cota:D}, we have 
\begin{align*}
\|\bu^{h}\|_{0,\Th}^{2} + |\bu^{h}|_{1,\Th}^{2} 
&\lesssim \|\beps{(\bu^{h})}\|_{0,\Th}^{2} + \sum_{F\in \Eho\cup\EhD} \|h_F^{-1/2}\salto{\bu^{h}}\|_{0,F}^{2} \\
  &\lesssim \Dinf^{-1}  \left(\|\mathcal{D}^{1/2}\beps{(\bu^{h})}\|_{0,\Th}^{2} + \sum_{F\in \Eho\cup\EhD} \|h_F^{-1/2}\av{\mathcal{D}}^{1/2}\salto{\bu^{h}}\|_{0,F}^{2}  \right)\\
&= \Dinf^{-1} \normA{\bu^{h}}^{2}.
 \end{align*}
Finally, substituting the above estimate in \eqref{eq:estimate:gtilde} yields
\begin{equation*}
 \begin{aligned}
 \left|\langle \bg , \bu^{h}\rangle_{\EhN}\right| \lesssim&     \|\bg\|_{1,\Gamma_N}\Dinf^{-1}  \normA{\bu^{h}}.
 \end{aligned}
\end{equation*}
Applying now the above estimate to each term in \eqref{formula-00}, we finally get
\begin{equation*}
 \begin{aligned}
 \abs{\langle\bg_{0} , \bu^{h}_0\rangle_{\EhN}}
&\lesssim \Dinf^{-1} 
 \norm{\bg_{0}}_{1,\Gamma_N} \normA{\bu^{h}_0}, &&\\
\int_0^t  \abs{\langle\bg_{\tau} , \bu^{h}\rangle_{\EhN}} \,d\tau
\lesssim & \int_0^t \Dinf^{-1/2} \| \bg_{\tau}\|_{1,\Gamma_N} \normA{\bu^{h}} \,d\tau. &&\\
\abs{\langle\bg (t) , \bu^{h}(t)\rangle_{\EhN}} 
&\lesssim 
\frac{ \Dinf^{-1}}{\epsilon}
\norm{\bg (t)}_{1,\Gamma_N}^2 
+ \epsilon  \normA{\bu^{h}(t)}^2 , &&
\end{aligned}
\end{equation*}
where for the last term we have also used the arithmetic geometric inequality with $\epsilon >0$. Substitution of the above estimates into \eqref{formula-00} completes the proof.
\end{proof}
%
\begin{proof}[Proof of Lemma~ \ref{le:aux04}]
We start rewriting the second equation  in \eqref{dg:mixform} with $\btau =\calD\beps(\bu^h)$
\begin{equation}\label{eq:lemmamisti}
\begin{aligned}
\|\mathcal{D}^{1/2}\beps{(\bu^{h})}\|_{0,\Th}^{2}&=
( \beps(\bu^h), \calD\beps(\bu^h))_{\Th}= ( \calA \bsig^h , \calD\beps(\bu^h))_{\Th} 
+\langle \cdd\salto{\bsig^{h}}, \salto{\calD\beps(\bu^h)}\rangle_{\Eho} \\
&\qquad +\langle \cdd (\bsig^{h}\bn -\bg), \calD\beps(\bu^h) \,\n\rangle_{\EhN}
+ \langle \salto{\bu^h}, \av{\calD\beps(\bu^h)}\rangle_{\Eho\cup \EhD}\\
&\qquad-\langle \av{\bu^{h}}_{(1-\delta)}-\av{\bu^h}, \salto{\calD\beps(\bu^h)} \rangle_{\Eho}, 
\end{aligned}
\end{equation}
Prior to estimate all terms on the right-hand side above, we note that Agmon's \eqref{agmon} and inverse inequalities \eqref{eq:inverse}, and the definition of $\cdd$ give
\begin{equation}\label{estima222}
  \| \cdd^{1/2} \salto{\calD\beps(\bu^h)} \|_{0,F}  
\lesssim \| \calD^{1/2}\beps(\bu^h) \|_{0,K},\qquad  \| \cdd^{1/2}  \av{\calD\beps(\bu^h)}_{\delta} \|_{0,F}  
\lesssim \| \calD^{1/2}\beps(\bu^h) \|_{0,K}\;.
 \end{equation}
Now, using  Cauchy-Schwarz inequality  and the first estimate above, the first three terms  in \eqref{eq:lemmamisti} can be bounded by
\begin{eqnarray*}
\left| ( \calA \bsig^h , \calD\beps(\bu^h))_{\Th} \right| & \leq & \|\mathcal{A}^{1/2}\bsig^{h} \|_{0,\Th}  \|\mathcal{D}^{1/2}\beps{(\bu^{h})}\|_{0,\Th}, \\ 
 \left| \langle \cdd\salto{\bsig^{h}}, \salto{\calD\beps(\bu^h)}\rangle_{\Eho} \right| & \lesssim & \| \cdd^{1/2} \salto{\bsig^h}\|_{0,\Eho} \| \calD^{1/2}\beps(\bu^h) \|_{0,K}  \\ 
\left| \langle \cdd (\bsig^{h}\bn -\bg), \calD\beps(\bu^h) \,\n\rangle_{\EhN} \right| & \lesssim &
\left( \| \cdd^{1/2} \salto{\bsig^h} \|_{0,\EhN} + \| \cdd^{1/2} \bg \|_{0,\EhN} \right) \| \calD^{1/2}\beps(\bu^h) \|_{0,K} .
\end{eqnarray*}
To estimate the last two terms in \eqref{eq:lemmamisti}, notice that $\cuu \cdd =O(1)$ since,
\begin{equation*}
\cuu^{-1} = \left(c_1  h_F^{-1} k^{2} \av{\calD}\right)^{-1} = (c_1 c_2)^{-1} c_2 h_F k^{-2} \av{\calD}^{-1} = (c_1 c_2)^{-1} \cdd.
\end{equation*}
Then, the Cauchy Schwarz inequality and \eqref{estima222} give for the fourth term
\begin{eqnarray}
\left| \langle \salto{\bu^h}, \av{\calD\beps(\bu^h)}\rangle_{\Eho\cup \EhD} \right| 
&\lesssim & \| \cuu^{1/2} \salto{\bu^h}\|_{0,\Eho\cup \EhD} \; \| \cdd^{1/2} \av{\calD\beps(\bu^h)} \|_{0,\Eho\cup \EhD}\nonumber\\
&\lesssim & \| \cuu^{1/2} \salto{\bu^h}\|_{0,\Eho\cup \EhD} \; \| \calD^{1/2}\beps(\bu^h) \|_{0,\Omega}.\label{caspa200}
 \end{eqnarray}
Analogously, the last term can be estimated using identity \eqref{eq:media-mediad} and \eqref{estima222}
\begin{equation*}
\left| - \langle \av{\bu^{h}}_{(1-\delta)}-\av{\bu^h}, \salto{\calD\beps(\bu^h)} \rangle_{\Eho} \right|  \lesssim  
 \| \cuu^{1/2} \salto{\bu^{h}} \|_{0,\Eho} \| \calD^{1/2}\beps(\bu^h) \|_{0,\Omega}.
\end{equation*}
Finally, 
substituting all the estimates into
 \eqref{eq:lemmamisti}  we obtain 
\begin{align*}
\|\mathcal{D}^{1/2}\beps{(\bu^{h})}\|_{0,\Th}
\lesssim &
\|\mathcal{A}^{1/2}\bsig^{h} \|_{0,\Th}
+ 
\| \cuu^{1/2} \salto{\bu^h}\|_{0,\Eho}
+
 \| \cdd^{1/2} \salto{\bsig^h} \|_{0,\EhN}
+
 \| \cdd^{1/2} \bg \|_{0, \EhN},
\end{align*}
The proof is then  concluded by arguing as in the proof of \eqref{aux:02} in Lemma \ref{le:aux00} (using estimate \eqref{eq:pongo-ora})
\begin{equation*}
 \norm{\cdd^{1/2} \bg}^{2}_{0,\EhN} 
\lesssim \Dinf^{-1}\| \bg \|_{1/2, \Gamma_N}^{2}.
\end{equation*}
\end{proof}


\end{document}